\renewcommand{\geq}{\geqslant}
\renewcommand{\leq}{\leqslant}
\def\eps{\varepsilon}
\newcommand{\Id}{{\rm Id }}
\newcommand\ba{\begin{equation}\begin{aligned}}
\newcommand\ea{\end{aligned}\end{equation}}
\newcommand{\be}{\begin{equation}}
\newcommand{\ee}{\end{equation}}
\newtheorem{theorem}{Theorem}[section]
\newtheorem{corollary}[theorem]{Corollary}
\newtheorem{conjecture}[theorem]{Conjecture}
\newtheorem{lemma}[theorem]{Lemma}
\newtheorem{definition}[theorem]{Definition}
\newtheorem*{definition*}{Definition}
\theoremstyle{remark}
\newtheorem*{remark*}{Remark}
\numberwithin{equation}{section}
\begin{document}

\title[Unstable Stokes waves]{Unstable Stokes waves}

\author{Vera~Mikyoung~Hur}
\address{Department of Mathematics, University of Illinois Urbana-Champaign, Urbana, IL 61801 USA}
\email{verahur@math.uiuc.edu}
\thanks{VMH is supported by the NSF award DMS-2009981.}

\author{Zhao Yang}
\address{Academy of Mathematics and Systems Science, Chinese Academy of Sciences, Beijing 100190 China}
\email{yangzhao@amss.ac.cn}


\keywords{Stokes wave; stability; spectrum; periodic Evans function}

\begin{abstract}
We investigate the spectral instability of a $2\pi/\kappa$ periodic Stokes wave of sufficiently small amplitude, traveling in water of unit depth, under gravity. Numerical evidence suggests instability whenever the unperturbed wave is resonant with its infinitesimal perturbations. This has not been analytically studied except for the Benjamin--Feir instability in the vicinity of the origin of the complex plane. Here we develop a periodic Evans function approach to give an alternative proof of the Benjamin--Feir instability and, also, a first proof of spectral instability away from the origin. Specifically we prove instability near the origin for $\kappa>\kappa_1:=1.3627827\dots$ and instability due to resonance of order two so long as an index function is positive. Validated numerics establishes that the index function is indeed positive for some $\kappa<\kappa_1$, whereby there exists a Stokes wave that is spectrally unstable even though it is insusceptible to the Benjamin--Feir instability. The proofs involve center manifold reduction, Floquet theory, and methods of ordinary and partial differential equations. Numerical evaluation reveals that the index function remains positive unless $\kappa=1.8494040\dots$. Therefore we conjecture that all Stokes waves of sufficiently small amplitude are spectrally unstable. For the proof of the conjecture, one has to verify that the index function is positive for $\kappa$ sufficiently small. 
\end{abstract} 

\maketitle 

\tableofcontents


\section{Introduction}\label{sec:intro}

Stokes in his 1847 paper \cite{Stokes1847} (see also \cite{Stokes1880}) made significant contributions to periodic waves at the free surface of an incompressible inviscid fluid in two dimensions, under the influence of gravity, traveling in permanent form with a constant velocity. For instance, he successfully approximated the solution when the amplitude is sufficiently small. The existence of Stokes waves was proved in the 1920s for small amplitude \cite{Nekrasov1921, Levi-Civita;stokes, Struik;stokes} and in the early 1960s for large amplitude \cite{Krasovskii1960, Krasovskii1961}. Therefore it came as a surprise in the mid 1960s when Benjamin \cite{Benjamin;BF} and Whitham \cite{Whitham;BF} (see also references cited in \cite{ZO;review} for others) discovered that a Stokes wave in sufficiently deep water, so that $\kappa h>1.36278\dots$, is unstable to long wavelength perturbations---namely, the Benjamin--Feir or modulational instability. Here $\kappa$ denotes the wave number of the unperturbed wave, and $h$ the fluid depth. 
In the 1990s, Bridges and Mielke \cite{BM;BF} analytically studied spectral instability of a Stokes wave of sufficiently small amplitude, rigorously justifying the formal arguments of \cite{Benjamin;BF, Whitham;BF} and others. See also \cite{NS,BMV,Yang;infinite} for infinite depth. Some fundamental issues remain open, however.  

In the 1980s, McLean \cite{McLean;finite-depth} (see also \cite{McLean;infinite1, McLean;infinite2} for infinite depth) numerically found spectral instability away from the origin of the complex plane. The Benjamin--Feir instability, by contrast, refers to the spectrum in the vicinity of the origin. Further numerical findings (see, for instance, \cite{FK}) suggested instability whenever the unperturbed wave is ``resonant'' with its infinitesimal perturbations (see \eqref{def:resonance}). Recently, Deconinck and Oliveras \cite{DO} focused the attention to resonance and numerically discovered ``bubbles'' of instability. Therefore there seems to exist a Stokes wave that is spectrally unstable even though it is insusceptible to the Benjamin--Feir instability. To the best of the authors' knowledge, this has not been analytically studied. We remark, however, more recently, Creedon, Deconinck, and Trichtchenko \cite{creedon2021highfrequency} gave analytical and numerical evidence of instability for almost all values of $\kappa h$. The present purpose is to make rigorous spectral analysis, to elucidate the numerical findings of \cite{McLean;finite-depth, FK, DO} and others and, also, justify the formal argument of \cite{creedon2021highfrequency} and, in the process, to give a first proof of spectral instability of a Stokes wave of sufficiently small amplitude away from the origin of the complex plane. 

Gardner's periodic Evans function \cite{Gardner;evans1, Gardner;evans2} is a powerful tool for locating and tracking essential spectrum for a wide variety of PDEs in one dimension, from viscous conservation laws \cite{Oh2003, Serre;evans} to the generalized Kruamoto--Sivashinsky equations \cite{KS;evans}, to generalized Korteweg--de Vries equations \cite{BJ;gKdV}, and to the nonlinear Klein--Gordon equations \cite{Klein-Gordon;evans}. Recently, the second author and his collaborators \cite{Johnson2019} devised a periodic Evans function methodology for discontinuous roll waves of the inviscid Saint--Venant equations.   
Despite the success in one dimension, however, periodic Evans function techniques have rarely been implemented in higher dimensions. Deng and Nii \cite{DN;evans} introduced an infinite-dimensional Evans function approach for elliptic eigenvalue problems in cylindrical domains, but for stability and instability of solitary waves. Also it is highly nontrivial to construct an unstable bundle, whose first Chern number equals to the number of eigenvalues, and evaluate such a topological quantity. Oh and Sandstede \cite{OS;evans} defined an approximate Evans function for periodic traveling waves in cylindrical domains. Unfortunately, it is incapable of exactly locating the spectrum. Here we develop a periodic Evans function approach for Stokes waves of sufficiently small amplitude which discloses spectral information through explicit calculations. 
We pause to remark that Haragus and Scheel \cite{HS;cg-solitary} defined an Evans function for capillary-gravity solitary waves and proved stability to finite wavelength perturbations for sufficiently small amplitude. To the best of the authors' knowledge, however, a periodic Evans function has not been proposed for the water wave problem. 

We begin in Section~\ref{sec:formulation} by ``flattening'' the free boundary and reformulating the water wave problem as first order PDEs with respect to the $x$ variable in $\mathbb{R}\times (0,1)$ in the $(x,y)$ plane (see \eqref{eqn:ww}), where $x$ is in the direction of wave propagation, and $y$ opposite to gravitational acceleration. There are other ways to fix the free boundary, for instance, reformulating the problem in terms of quantities at the fluid surface alone. The resulting equations become nonlocal (see, for instance, \cite{BMV, NS}), however, and periodic Evans function techniques are inapplicable. Recently, the first author and her collaborators (see, for instance, \cite{BH;fKdV,HJ;Whitham,HP;BW}) performed spectral perturbation analysis in the vicinity of the origin of the complex plane for small values of the Floquet exponent, determining modulational stability and instability for a class of nonlinear dispersive equations, permitting nonlocal operators. It is highly nontrivial, however, to extend the argument to the spectrum away from the origin. 

After working out in Section~\ref{sec:Stokes} the small amplitude asymptotics for periodic traveling waves (see \eqref{eqn:stokes exp}), in Section~\ref{sec:L}, we formulate the spectral stability problem for a Stokes wave of sufficiently small amplitude as first order ODEs with respect to the $x$ variable in an infinite-dimensional function space (see \eqref{eqn:LB}). In Section~\ref{sec:eps=0}, we focus the attention to zero amplitude and verify that the spectrum is the imaginary axis and parametrized by the Floquet exponent through the dispersion relation (see \eqref{eqn:sigma}). Also, at each point of the imaginary axis, we define a finite dimensional eigenspace (see Definition~\ref{def:proj}). In Section~\ref{sec:reduction}, we turn the attention to nonzero amplitude. We take a center manifold reduction approach (see, for instance, \cite{Mielke;reduction}) to reduce the spectral stability problem in an infinite-dimensional function space, to the finite dimensional eigenspace (see \eqref{eqn:LB;u1} and \eqref{eqn:A}), whereby we define Gardner's periodic Evans function (see \eqref{def:Delta}). In Section~\ref{sec:proj}, we work out explicit formulae of the projection operator onto the eigenspace. This is among the most technical parts of the proof. 

We remark that for zero Floquet exponent, the linearized operator of the water wave problem about a Stokes wave of sufficiently small amplitude, not necessarily zero amplitude, has four eigenvalues at the origin of the complex plane. Bridges and Mielke \cite{BM;BF} examined whether the eigenvalues enter the right half plane as the Floquet exponent increases, to determine modulational stability and instability. (See also \cite{BMV, NS} for the infinite depth.) Unfortunately, one does not expect to be able to exactly locate the spectrum away from the origin for nonzero amplitude. Our approach, instead, takes advantage of that for zero amplitude, the spectrum is explicitly characterized through the dispersion relation, and we examine how the spectrum varies as the amplitude increases.

In Section~\ref{sec:BF}, we determine the power series expansion of the periodic Evans function for the spectrum in the vicinity of the origin of the complex plane, to give an alternative proof of the Benjamin--Feir instability for $\kappa h>1.3627827\dots$. See Theorem~\ref{thm:BF} and Corollary~\ref{cor:BF}. Following along the same line of argument, one can show instability due to resonance of order two (see \eqref{def:resonance}), provided that $0.86430\dots<\kappa h<1.00804\dots$. In Section~\ref{sec:high-freq}, we take matters further and relate the zeros of the periodic Evans function to those of a quadratic Weierstrass polynomial, to prove spectral instability due to resonance of order two, provided that $\operatorname{ind}_2(\kappa h)>0$ (see \eqref{def:ind2}). See Theorem~\ref{thm:unstable2}. Calculating the monodromy matrix of the periodic Evans function involves extremely long and tedious algebraic manipulations, for which we use Symbolic Math Toolbox in MATLAB. The MATLAB scripts generated during the course of the project can be made available upon request.

Numerical evaluation reveals that $\operatorname{ind}_2(\kappa h)>0$ (see \eqref{def:ind2}) unless $\kappa h=1.84940\dots$. Our result agrees with that from a formal perturbation method (see, for instance, \cite{creedon2021highfrequency}) and explains numerical findings (see \cite{McLean;finite-depth, FK, DO} among others). Also our result analytically confirms ``bubbles" of unstable spectrum (see \cite{DO, creedon2021highfrequency} among others). Importantly our result analytically confirms Stokes waves of sufficiently small amplitude that are spectrally unstable even though they are insusceptible to the Benjamin--Feir instability. See Corollary~\ref{cor:high-freq}. Furthermore our result leads to the conjecture that {\em all} Stokes waves of sufficiently small amplitude are spectrally unstable. For the proof of the conjecture, it remains to verify that $\operatorname{ind}_2(\kappa h)>0$ for $\kappa h$ sufficiently small.

Our approach is robust and can accommodate the effects of infinite depth \cite{Yang;infinite} surface tension \cite{capillary-gravity}, constant vorticity \cite{constant-vorticity}, transversal perturbations, among many others. This is a subject of future investigation. Also it can be useful for other PDEs in higher dimensions, for instance, the equations in \cite{DN;evans, OS;evans}. 

\noindent {\bf Acknowledgement.}~The authors are grateful to Massimiliano Berti and Benard Deconinck for valuable discussions, and the anonymous referee for helpful suggestions. 

\section{The water wave problem}\label{sec:formulation}

The water wave problem, in the simplest form, concerns the wave motion at the free surface of an incompressible inviscid fluid in two dimensions, lying below a body of air, acted on by gravity, when the effects of surface tension are negligible. Although an incompressible fluid can have variable density, we assume for simplicity that the density $=1$. Suppose for definiteness that in Cartesian coordinates, the wave propagation is in the $x$ direction, and the gravitational acceleration in the negative $y$ direction. Suppose that the fluid at rest occupies the region $\{(x,y)\in\mathbb{R}^2: 0<y<h\}$, where $h>0$ is the fluid depth. Let 
\[
y=h+\eta(x,t),\quad x\in\mathbb{R},
\] 
denote the fluid surface at time $t$, and $y=0$ the rigid bed. Physically realistic is that $h+\eta(x,t)>0$ for all $x\in\mathbb{R}$. Throughout we assume an irrotational flow, whereby a velocity potential $\phi(x,y,t)$ satisfies
\begin{subequations}\label{eqn:ww;h}
\begin{align}
&\phi_{xx}+\phi_{yy}=0&& \text{for $0<y<h+\eta(x,t)$,}
\intertext{subject to the boundary condition}
&\phi_y=0&&\text{at $y=0$.}
\intertext{The kinematic and dynamic boundary conditions at the fluid surface,}
&\left.\begin{aligned}
&\eta_t-c\eta_x+\eta_x\phi_x=\phi_y\\
&\phi_t-c\phi_x+\frac12(\phi_x^2+\phi_y^2)+g\eta=0\\
\end{aligned}\,\right\}&&\text{at $y=h+\eta(x,t)$}, 
\end{align}
\end{subequations}
state that each fluid particle at the surface remains so for all time and that the pressure is constant at the fluid surface, where $c\neq0,\in\mathbb{R}$ is the velocity of the wave, and $g>0$ the constant of gravitational acceleration. We assume that there is no motion in the air. 

We begin by recasting \eqref{eqn:ww;h} in dimensionless variables. Rather than introducing new notation for all the variables, we choose, wherever convenient, to write, for instance, $x\mapsto x/h$. This is to be read ``$x$ is replaced by $x/h$'', so that hereafter the symbol $x$ will mean a dimensionless variable. With the understanding, let 
\begin{equation}\label{def:dimensionless}
\begin{gathered}
x\mapsto x/h,\qquad y\mapsto y/h,\qquad t\mapsto ct/h, \\
\text{and}\qquad\eta\mapsto \eta/h,\qquad \phi\mapsto \phi/(ch).
\end{gathered}
\end{equation}
Correspondingly let
\begin{equation}\label{def:mu}
\mu=gh/c^2
\end{equation}
denote the (dimensionless) inverse square of the Froude number. 
Inserting \eqref{def:dimensionless} and \eqref{def:mu} into \eqref{eqn:ww;h} we arrive at
\ba\label{eqn:ww;1}
&\phi_{xx}+\phi_{yy}=0&& \text{for $0<y<1+\eta(x,t)$,}\\
&\phi_y=0&&\text{at $y=0$,}\\
&\eta_t-\eta_x+\eta_x\phi_x=\phi_y&&\text{at $y=1+\eta(x,t)$,} \\ 
&\phi_t-\phi_x+\frac12(\phi_x^2+\phi_y^2)+\mu\eta=0\quad&&\text{at $y=1+\eta(x,t)$.}  
\ea 

It is advantageous to formulate the spectral stability problem for \eqref{eqn:ww;1} as first order ODEs with respect to the $x$ variable (see \eqref{eqn:spec}). We introduce
\be\label{def:u}
u=\phi_x.
\ee 
Notice that \eqref{eqn:ww;1} is a free boundary problem, of which $\eta$ is part of the solution. We make the change of variables
\begin{equation}\label{def:y}
 y\mapsto \frac{y}{1+\eta(x,t)},
\end{equation}
transforming the fluid region $\{(x,y)\in\mathbb{R}^2: 0<y<1+\eta(x,t)\}$ into $\mathbb{R}\times (0,1)$, whereby ``flattening'' the free boundary. Clearly \eqref{def:y} is well defined so long as $1+\eta(x,t)>0$ for all $x\in\mathbb{R}$, particularly, when $\eta$ is small. Substituting \eqref{def:u} and \eqref{def:y} into \eqref{eqn:ww;1} we use the chain rule and make a straightforward calculation to arrive at
\ba\label{eqn:ww}
&\phi_x-\frac{y\eta_x}{1+\eta}\phi_y-u=0&&\text{for $0<y<1$,}\\
&u_x-\frac{y\eta_x}{1+\eta}u_y+\frac{1}{(1+\eta)^2}\phi_{yy}=0&&\text{for $0<y<1$,}\\
&\phi_y=0&&\text{at $y=0$},\\
&\eta_t+(u-1)\eta_x-\frac{1}{1+\eta}\phi_y=0&&\text{at $y=1$},\\
&\phi_t-u+\frac{(u-1)\eta_x}{1+\eta}\phi_y+\frac12u^2-\frac{1}{2(1+\eta)^2}\phi_y^2+\mu\eta=0\quad&&\text{at $y=1$}.
\ea 

There are other ways to fix the free boundary, for instance, reformulating the problem in terms of quantities at the fluid surface alone. The resulting equations become nonlocal (see, for instance, \cite{BMV, NS}), however, for which the periodic Evans function and other ODE techniques are inapplicable.   

\section{Stokes waves of sufficiently small amplitude}\label{sec:Stokes}

By a Stokes wave we mean a temporally stationary and spatially periodic solution of \eqref{eqn:ww}. Therefore $\phi$, $\eta$ and
\begin{equation}\label{eqn:u;stokes}
u=\phi_x-\frac{y\eta_x}{1+\eta}\phi_y,
\end{equation}
by the first equation of \eqref{eqn:ww}, satisfy
\begin{subequations}\label{eqn:stokes}
\begin{align}
&u_x-\frac{y\eta_x}{1+\eta}u_y+\frac{1}{(1+\eta)^2}\phi_{yy}=0&&\text{for $0<y<1$,}
\intertext{subject to the boundary conditions}
&\phi_y=0&&\text{at $y=0$}
\intertext{and}
&(u-1)\eta_x-\frac{1}{1+\eta}\phi_y=0&&\text{at $y=1$},\label{eqn:stokes;K}\\
&u-\frac12u^2-\frac{1}{2(1+\eta)^2}\phi_y^2-\mu\eta=0&&\text{at $y=1$},\label{eqn:stokes;D}
\end{align}
\end{subequations}
where \eqref{eqn:stokes;D} follows from \eqref{eqn:stokes;K} and
\[
u-\frac{(u-1)\eta_x}{1+\eta}\phi_y-\frac12u^2+\frac{1}{2(1+\eta)^2}\phi_y^2-\mu\eta=0
\quad\text{at $y=1$},
\]
by the fifth equation of \eqref{eqn:ww}. 

Stokes in his 1847 paper \cite{Stokes1847} (see also \cite{Stokes1880}) made significant contributions to waves of the kind, for instance, successfully approximating the solution when the amplitude is small. The existence of Stokes waves was rigorously established by Nekrasov \cite{Nekrasov1921} (see also \cite{Kuznetsov;survey}) and \mbox{Levi-Civita} \cite{Levi-Civita;stokes} in the infinite depth, Struik \cite{Struik;stokes} in the finite depth, for small amplitude, and Krasovskii \cite{Krasovskii1960, Krasovskii1961} and others for large amplitude. It would be impossible to give a complete account here and we encourage the interested reader to some excellent surveys \cite{Toland;survey}. We pause to remark that one can take a spatial dynamics approach, proposed by Kirchg\"assner \cite{Kirchgassner;ww} and further developed by others (see \cite{Groves;survey} and references therein), to work out the existence for \eqref{eqn:stokes} and \eqref{eqn:u;stokes}, for instance, in
\[
(\phi,u,\eta)\in C^k(\mathbb{R}; H^2(0,1) \times H^1(0,1)\times \mathbb{R})
\quad \text{for any $k\geq0,\in\mathbb{Z}$}.
\] 
See, for instance, \cite[Section~3]{Mielke;reduction} for some details.

We turn the attention to the small amplitude asymptotics of periodic solutions of \eqref{eqn:stokes} and \eqref{eqn:u;stokes}. In what follows, 
\[
\text{$\eps\in\mathbb{R}$ denotes the dimensionless amplitude parameter},
\]
and suppose that
\begin{equation}\label{eqn:stokes exp}
\begin{aligned}
\phi(x,y;\eps)=&\bar{\phi}_1x\eps+\phi_1(x,y)\eps+\bar{\phi}_2x\eps^2+\phi_2(x,y)\eps^2+\bar{\phi}_3x\eps^3 +\phi_3(x,y)\eps^3+O(\eps^4), \\
\eta(x;\eps)=&\eta_1(x)\eps+\eta_2(x)\eps^2+\eta_3(x)\eps^3+O(\eps^4),\\
\mu(\eps)=&\mu_0+\mu_1\eps+\mu_2\eps^2+\mu_3\eps^3+O(\eps^4)
\end{aligned}
\end{equation}
as $\eps\to 0$, where $\bar{\phi}_1,\phi_1,\bar{\phi}_2,\phi_2,\bar{\phi}_3,\phi_3,\dots$, $\eta_1,\eta_2,\eta_3,\dots$, $\mu_0,\mu_1,\mu_2,\mu_3,\dots$ are to be determined and, hence, 
\[
u(x,y;\eps)=u_1(x,y)\eps+u_2(x,y)\eps^2+u_3(x,y)\eps^3+O(\eps^4)
\]
as $\eps\to 0$, where $u_1,u_2,u_3,\dots$ can be determined in terms of  $\bar{\phi}_1,\phi_1,\bar{\phi}_2,\phi_2,\bar{\phi}_3,\phi_3,\dots$ and $\eta_1,\eta_2,\eta_3,\dots$ by \eqref{eqn:u;stokes}. We assume that $\phi_1,\phi_2,\phi_3,\dots$ and  $\eta_1,\eta_2,\eta_3,\dots$ are $T$ periodic functions of $x$, where 
\[
\text{$T=2\pi/\kappa$ and $\kappa>0$ is the wave number,} 
\]
and $\bar{\phi}_1,\bar{\phi}_2,\bar{\phi}_3,\dots$ and $\mu_0,\mu_1,\mu_2,\mu_3,\dots$ are constants. We may assume that $\phi_1,\phi_2,\phi_3,\dots$ are odd functions of $x$ and $\eta_1,\eta_2,\eta_3,\dots$ are even functions. We shall choose $\bar{\phi}_1,\bar{\phi}_2,\bar{\phi}_3,\dots$, so that $\eta_1,\eta_2,\eta_3,\dots$ are each of mean zero over one period\footnote{Alternatively, suppose that
\begin{align*}
\phi(x,y;\eps)=&\phi_1(x,y)\eps+\phi_2(x,y)\eps^2 +\phi_3(x,y)\eps^3+O(\eps^4), \\
\eta(x;\eps)=&\bar{\eta}+\eta_1(x)\eps+\eta_2(x)\eps^2+\eta_3(x)\eps^3+O(\eps^4)
\end{align*}
as $\eps\to 0$, and one may choose $\bar{\eta}$ so that $\phi_1,\phi_2, \phi_3,\dots$ are each of mean zero over one period with respect to the $x$ variable \cite{Benjamin;BF}. See also \cite{constant-vorticity}. Here we prefer \eqref{eqn:stokes exp} because the dimensionless fluid depth $=1$. }. Notice that \eqref{eqn:stokes} does not involve $\phi$ but merely its derivatives. We pause to remark that \eqref{eqn:stokes exp} converges for $|\eps|$ sufficiently small, for instance, in $H^{s+2}(\mathbb{R}/T\mathbb{Z}\times (0,1))\times H^{s+5/2}(\mathbb{R}/T\mathbb{Z})\times\mathbb{R}$ for any $s>1$ \cite{NR;anal}. Thus $\phi(\eps)$, $\eta(\eps)$ and $\mu(\eps)$ depend real analytically on $\eps$. 

Substituting \eqref{eqn:stokes exp} into \eqref{eqn:stokes} and \eqref{eqn:u;stokes}, at the order of $\eps$, we gather
\ba \label{eqn:stokes1}
&{\phi_1}_{xx}+{\phi_1}_{yy}=0&&\text{for $0<y<1$},\\
&{\phi_1}_y=0&&\text{at $y=0$},\\
&{\eta_1}_x+{\phi_1}_y=0 &&\text{at $y=1$},\\
&\bar{\phi}_1+{\phi_1}_x-\mu_0\eta_1=0\quad&&\text{at $y=1$}.
\ea
Recall that $\phi_1$ and $\eta_1$ are $2\pi/\kappa$ periodic functions of $x$, $\kappa>0$, $\phi_1$ is an odd function of $x$, $\eta_1$ is an even function and of mean zero, and $\bar{\phi}_1$ and $\mu_0$ are constants. We solve \eqref{eqn:stokes1} by means of separation of variables, for instance, to obtain
\begin{gather}
\bar{\phi}_1=0,\qquad \phi_1(x,y)=\sin(\kappa x)\cosh(\kappa y), 
\qquad \eta_1(x)=\sinh(\kappa)\cos(\kappa x), \label{def:stokes1} 
\intertext{and}
\mu_0=\kappa \coth(\kappa).\label{def:mu0}
\end{gather}
The result agrees, for instance, with \cite[(11)-(12)]{Benjamin;BF}, after suitably redefining $\eps$. We remark that \eqref{def:mu0} or, equivalently, 
\[
c_0= \sqrt{\frac{g\tanh(\kappa h)}{\kappa}},
\]
by \eqref{def:dimensionless} and \eqref{def:mu}, makes the dispersion relation of Stokes waves. Notice that $\mu_0$ is a monotonically increasing function of $\kappa$, and $\mu_0>1$ if $\kappa>0$. A solitary wave is found in the limit as $\kappa\to0$ and, hence, $\mu_0\to1$.

We proceed likewise, substituting \eqref{eqn:stokes exp} into \eqref{eqn:stokes} and \eqref{eqn:u;stokes}, and solving at higher orders of $\eps$, to successively obtain $\bar{\phi}_2,\phi_2,\bar{\phi}_3,\phi_3,\dots$, $\eta_2,\eta_3,\dots$, and $\mu_1,\mu_2,\mu_3,\dots$. The result is in Appendix~\ref{A:stokes}.

\section{The spectral stability problem}\label{sec:spec}

Let $\phi(\eps)$, $u(\eps)$, $\eta(\eps)$ and $\mu(\eps)$, for $\eps\in\mathbb{R}$ and $|\eps|\ll1$, denote a Stokes wave of sufficiently small amplitude, whose existence follows from the previous section, and \eqref{eqn:stokes exp}, \eqref{def:stokes1}, \eqref{def:mu0}, \eqref{def:stokes2}, \eqref{def:stokes3} hold true for $|\eps|$ sufficiently small. We are interested in its spectral stability and instability. 

\subsection{The linearized problem}\label{sec:linearize}

Linearizing \eqref{eqn:ww} about $\phi(\eps)$, $u(\eps)$, $\eta(\eps)$ and evaluating the result at $\mu=\mu(\eps)$ we arrive at
\ba \label{eqn:linearize}
&\phi_x-\frac{y\eta_x(\eps)}{1+\eta(\eps)}\phi_y-\frac{y\phi_y(\eps)}{1+\eta(\eps)}\eta_x+\frac{y(\eta_x\phi_y)(\eps)}{(1+\eta(\eps))^2}\eta-u=0&&\text{for $0<y<1$,}\\
&u_x-\frac{y\eta_x(\eps)}{1+\eta(\eps)}u_y-\frac{yu_y(\eps)}{1+\eta(\eps)}\eta_x+\frac{y(\eta_xu_y)(\eps)}{(1+\eta(\eps))^2}\eta+\frac{1}{(1+\eta(\eps))^2}\phi_{yy}-\frac{2\phi_{yy}(\eps)}{(1+\eta(\eps))^3}\eta=0&&\text{for $0<y<1$,}\\
&\eta_t+(u(\eps)-1)\eta_x+\eta_x(\eps)u-\frac{1}{1+\eta(\eps)}\phi_y+\frac{\phi_y(\eps)}{(1+\eta(\eps))^2}\eta=0&&\text{at $y=1$},\\
&\phi_t-u+\frac{((u-1)\phi_y)(\eps)}{1+\eta(\eps)}\eta_x+\frac{(\phi_y\eta_x)(\eps)}{1+\eta(\eps)}u+u(\eps)u+\mu(\eps)\eta=0&&\text{at $y=1$},\\
&\phi_y=0&&\text{at $y=0$},
\ea 
where the fourth equation of \eqref{eqn:linearize} follows from the linearization of the fifth equation of \eqref{eqn:ww} and \eqref{eqn:stokes;K}. 
Seeking a solution of \eqref{eqn:linearize} of the form 
\[
\begin{pmatrix}\phi(x,y,t) \\ u(x,y,t) \\ \eta(x,t) \end{pmatrix}
=e^{\lambda t}\begin{pmatrix}\phi(x,y) \\ u(x,y) \\ \eta(x) \end{pmatrix},
\quad\lambda\in\mathbb{C},
\] 
we arrive at
\begin{subequations}\label{eqn:spec}
\begin{align}
&\phi_x-\frac{y\eta_x(\eps)}{1+\eta(\eps)}\phi_y-\frac{y\phi_y(\eps)}{1+\eta(\eps)}\eta_x+\frac{y(\eta_x\phi_y)(\eps)}{(1+\eta(\eps))^2}\eta-u=0&&\text{for $0<y<1$,} \label{eqn:spec;u}\\
&\begin{aligned}\label{eqn:spec;phi}
u_x-&\frac{y\eta_x(\eps)}{1+\eta(\eps)}u_y-\frac{yu_y(\eps)}{1+\eta(\eps)}\eta_x\\
+&\frac{1}{(1+\eta(\eps))^2}\phi_{yy}+\Big(\frac{y(\eta_xu_y)(\eps)}{(1+\eta(\eps))^2}-\frac{2\phi_{yy}(\eps)}{(1+\eta(\eps))^3}\Big)\eta=0
\end{aligned}&&\text{for $0<y<1$,}\\
&\lambda\eta+(u(\eps)-1)\eta_x+\eta_x(\eps)u-\frac{1}{1+\eta(\eps)}\phi_y+\frac{\phi_y(\eps)}{(1+\eta(\eps))^2}\eta=0&&\text{at $y=1$}\label{eqn:spec;K}
\intertext{and}
&\begin{aligned}\label{eqn:spec;D}
\lambda\phi+&(u(\eps)-1)u+\frac{\phi_y(\eps)}{(1+\eta(\eps))^2}\phi_y \\
+&\Big(\mu(\eps)-\lambda\frac{\phi_y(\eps)}{1+\eta(\eps)}-\frac{\phi_y(\eps)^2}{(1+\eta(\eps))^3}\Big)\eta=0
\end{aligned}&&\text{at $y=1$},\\
&\phi_y=0&&\text{at $y=0$},\label{eqn:spec;bdry}
\end{align}
\end{subequations}
where \eqref{eqn:spec;D} follows from \eqref{eqn:spec;K} and
\[
\lambda\phi-u+\frac{((u-1)\phi_y)(\eps)}{1+\eta(\eps)}\eta_x+\frac{(\phi_y\eta_x)(\eps)}{1+\eta(\eps)}u+u(\eps)u+\mu(\eps)\eta=0\quad\text{at $y=1$},
\] 
by the fourth equation of \eqref{eqn:linearize}. Roughly speaking, $\lambda\in\mathbb{C}$ is in the spectrum if \eqref{eqn:spec} admits a nontrivial bounded solution in some function space, and a Stokes wave of sufficiently small amplitude is spectrally stable if the spectrum does not intersect the right half plane of $\mathbb{C}$ for $\eps\in\mathbb{R}$ and $|\eps|\ll1$. We make these precise in Section~\ref{sec:L}.

Notice that \eqref{eqn:spec;D} is not autonomous in $x$. Thus we introduce
\begin{equation}\label{def:tildeu}
\begin{aligned}
\upsilon=-\left(\lambda\phi+(u(\cdot,1;\eps)-1)u+\frac{\phi_y(\cdot,1;\eps)}{(1+\eta(\eps))^2}\phi_y\right)\left(\mu(\eps)-\lambda\frac{\phi_y(\cdot,1;\eps)}{1+\eta(\eps)}-\frac{\phi_y(\cdot,1;\eps)^2}{(1+\eta(\eps))^3}\right)^{-1},
\end{aligned}
\end{equation}
so that \eqref{eqn:spec;D} becomes 
\begin{equation}\label{eqn:bdry;zeta}
\eta-\upsilon=0\quad\text{at $y=1$}.
\end{equation}
Clearly \eqref{def:tildeu} is well defined for $\eps\in\mathbb{R}$ and $|\eps|\ll1$, provided that $|{\rm Re}\,\lambda|$ is bounded. Conversely
\begin{equation} \label{def:u(tildeu)}
\begin{aligned}
u=\left(\Big(\mu(\eps)-\lambda\frac{\phi_y(\cdot,1;\eps)}{1+\eta(\eps)}-\frac{\phi_y(\cdot,1;\eps)^2}{(1+\eta(\eps))^3}\Big)\upsilon+\lambda\phi+\frac{\phi_y(\cdot,1;\eps)}{(1+\eta(\eps))^2}\phi_y\right)\big(1-u(\cdot,1;\eps)\big)^{-1}
\end{aligned}
\end{equation}
is well defined for $\eps\in\mathbb{R}$ and $|\eps|\ll1$.

We proceed to rewrite \eqref{eqn:spec;u}-\eqref{eqn:spec;K} for $\phi$, $\upsilon$ and $\eta$. We begin by replacing $u$ in \eqref{eqn:spec;u} by the right side of \eqref{def:u(tildeu)}, and $u(\cdot,1)$ in \eqref{eqn:spec;K} by the right side of \eqref{def:u(tildeu)}, evaluated at $y=1$. We replace $u_y$ in \eqref{eqn:spec;phi} by 
\begin{equation} \label{def:u(tildeu)_y}
u_y=\left(\Big(\mu(\eps)-\lambda\frac{\phi_y(\cdot,1;\eps)}{1+\eta(\eps)}-\frac{\phi_y(\cdot,1;\eps)^2}{(1+\eta(\eps))^3}\Big)\upsilon_y+\lambda\phi_y+\frac{\phi_y(\cdot,1;\eps)}{(1+\eta(\eps))^2}\phi_{yy}\right)\big(1-u(\cdot,1;\eps)\big)^{-1},
\end{equation}
by \eqref{def:u(tildeu)}, and likewise $u_x$ in \eqref{eqn:spec;phi} by 
\begin{equation} \label{def:u_x}
\begin{aligned}
u_x=&\Big(\Big(\mu(\eps)-\lambda\frac{\phi_y(\cdot,1;\eps)}{1+\eta(\eps)}-\frac{\phi_y(\cdot,1;\eps)^2}{(1+\eta(\eps))^3}\Big)\big(1-u(\cdot,1;\eps)\big)^{-1}\Big)_x\upsilon \\
&+\Big(\mu(\eps)-\lambda\frac{\phi_y(\cdot,1;\eps)}{1+\eta(\eps)}-\frac{\phi_y(\cdot,1;\eps)^2}{(1+\eta(\eps))^3}\Big)\big(1-u(\cdot,1;\eps)\big)^{-1}\upsilon_x \\
&+\lambda\big(\big(1-u(\cdot,1;\eps)\big)^{-1}\big)_x\phi
+\lambda\big(1-u(\cdot,1;\eps)\big)^{-1}\phi_x \\
&+\Big(\frac{\phi_y(\cdot,1;\eps)}{(1+\eta(\eps))^2}\big(1-u(\cdot,1;\eps)\big)^{-1}\Big)_x\phi_y
+\frac{\phi_y(\cdot,1;\eps)}{(1+\eta(\eps))^2}\big(1-u(\cdot,1;\eps)\big)^{-1}\phi_{xy}.
\end{aligned}
\end{equation}
Also we rewrite $\phi_x$ in \eqref{def:u_x} in terms of $\phi$, $\upsilon$ and $\eta$ using \eqref{eqn:spec;u} and \eqref{def:u(tildeu)} and, likewise, $\phi_{xy}$ in \eqref{def:u_x} in terms of $\phi$, $\upsilon$ and $\eta$ by differentiating \eqref{eqn:spec;u} with respect to the $y$ variable and using \eqref{def:u(tildeu)_y}.

\subsection{Spectral stability and instability}\label{sec:L}

Let $\mathbf{u}=\begin{pmatrix} \phi \\ \upsilon \\ \eta \end{pmatrix}$, 
and we write \eqref{eqn:spec} as
\be \label{eqn:LB}
\mathbf{u}_x=\mathbf{L}(\lambda)\mathbf{u}+\mathbf{B}(x;\lambda,\eps)\mathbf{u},
\ee 
where $\lambda\in\mathbb{C}$ and $\eps\in\mathbb{R}$ and $|\eps|\ll1$,
\[
\mathbf{L}(\lambda): {\rm dom}(\mathbf{L}) \subset Y \to Y\quad\text{and}\quad
\mathbf{B}(x;\lambda,\eps): 
\mathbb{R}\times {\rm dom}(\mathbf{L}) \subset \mathbb{R}\times Y \to Y,
\] 
\be \label{def:L}
\mathbf{L}(\lambda)\mathbf{u}=\begin{pmatrix} 
\lambda\phi+\mu_0 \upsilon \\ 
-\mu_0^{-1}(\phi_{yy}+\lambda^2\phi+\mu_0\lambda \upsilon)\\
[\lambda\eta-\phi_y]_{y=1}\end{pmatrix},
\ee  
\begin{equation}\label{def:Y}
Y=H^1(0,1)\times L^2(0,1)\times \mathbb{C}
\end{equation}
and
\begin{equation}\label{def:dom}
{\rm dom}(\mathbf{L})
=\{\mathbf{u}\in H^2(0,1)\times H^1(0,1)\times \mathbb{C}:\eta-\upsilon(1)=0, \phi_y(0)=0\}.
\end{equation}
Notice that $\mathbf{L}(\lambda)$ is the leading part of \eqref{eqn:spec;u}-\eqref{eqn:spec;K}, by \eqref{eqn:stokes exp}, \eqref{def:mu0}, \eqref{def:u(tildeu)}, \eqref{def:u(tildeu)_y}, \eqref{def:u_x}, and $\mathbf{B}(x;\lambda,\eps)$ is a linear operator. Notice that if $\mathbf{u}\in{\rm dom}(\mathbf{L})$ then \eqref{eqn:bdry;zeta} and \eqref{eqn:spec;bdry} hold true. Also $\mathbf{B}(x;\lambda,0)=\mathbf{0}$. Therefore when $\eps=0$, \eqref{eqn:LB} becomes $\mathbf{u}_x=\mathbf{L}(\lambda)\mathbf{u}$. 
We remark that $\mathbf{L}(\lambda)$ and $\mathbf{B}(x;\lambda,\eps)$ depend analytically on $\lambda$, and $\mathbf{B}(x;\lambda,\eps)$ depends analytically on $\eps$. Although $\phi(\eps)$ is not periodic in the $x$ variable (see the first equation of \eqref{eqn:stokes exp}), \eqref{eqn:spec} involves merely its derivatives, whence $\mathbf{B}(x;\lambda,\eps)$ is $T(=2\pi/\kappa)$ periodic in $x$. Also $\mathbf{B}(x;\lambda,\eps)$ is smooth in $x$. Our proofs do not involve all the details of $\mathbf{B}(x;\lambda,\eps)$, whence we do not include the formula here. Clearly ${\rm dom}(\mathbf{L})$ is dense in $Y$. 

Let 
\[
\mathcal{L}(\lambda,\eps):{\rm dom}(\mathcal{L})\subset X \to X,
\] 
where 
\begin{equation}\label{def:operator}
\mathcal{L}(\lambda,\eps)\mathbf{u}=\mathbf{u}_x-(\mathbf{L}(\lambda)+\mathbf{B}(x;\lambda,\eps))\mathbf{u},
\end{equation}
\begin{equation}\label{def:X}
X=L^2(\mathbb{R};Y)\quad\text{and}\quad 
{\rm dom}(\mathcal{L})=H^1(\mathbb{R};Y)\bigcap L^2(\mathbb{R};{\rm dom}(\mathbf{L}))
\end{equation}
is dense in $X$, so that \eqref{eqn:LB} becomes 
\begin{equation}\label{eqn:L}
\mathcal{L}(\lambda,\eps)\mathbf{u}=0.
\end{equation}
We regard $\mathcal{L}(\eps)$ as $\mathcal{L}(\lambda,\eps)$, parametrized by $\lambda\in\mathbb{C}$. 

\begin{definition}[The spectrum of $\mathcal{L}(\eps)$]\rm
For $\eps\in\mathbb{R}$ and $|\eps|\ll1$, 
\[
{\rm spec}(\mathcal{L}(\eps))=\{\lambda\in\mathbb{C}:
\text{$\mathcal{L}(\lambda,\eps):{\rm dom}(\mathcal{L})\subset X\to X$ is not invertible}\}.
\]
\end{definition}

We pause to remark that $\mathcal{L}(\lambda,\eps)$ makes sense provided that  $|\eps|\ll 1$. 

\begin{definition}[Spectral stability and instability]\rm
A Stokes wave of sufficiently small amplitude is said to be {\em spectrally stable} if 
\[
{\rm spec}(\mathcal{L}(\eps)) \subset \{\lambda\in\mathbb{C}: {\rm Re}\,\lambda\leq 0\}
\]
for $\eps\in\mathbb{R}$ and $|\eps|\ll1$, and {\em spectrally unstable} otherwise.
\end{definition}

Since $\mathbf{B}(x;\lambda,\eps)$ and, hence, $\mathcal{L}(\lambda,\eps)$ are $T(=2\pi/\kappa)$ periodic in $x$, by Floquet theory, the point spectrum of $\mathcal{L}(\eps)$ is empty. Moreover $\lambda$ is in the essential spectrum of $\mathcal{L}(\eps)$
if and only if \eqref{eqn:LB} admits a nontrivial solution $\mathbf{u}\in L^\infty(\mathbb{R};Y)$ satisfying
\[
\mathbf{u}(x+T)=e^{ik T}\mathbf{u}(x)
\quad \text{for some $k\in\mathbb{R}$},\quad\text{the Floquet exponent}.
\]
See, for instance, \cite{Gardner;evans1,OS;evans} for details.

In what follows, the asterisk means complex conjugation. 

\begin{lemma}[Symmetries of the spectrum]\label{lem:symm} 
If $\lambda\in{\rm spec}(\mathcal{L}(\eps))$ then $\lambda^*,-\lambda\in{\rm spec}(\mathcal{L}(\eps))$ and, hence, $-\lambda^*\in{\rm spec}(\mathcal{L}(\eps))$. 
In other words, ${\rm spec}(\mathcal{L}(\eps))$ is symmetric about the real and imaginary axes.
\end{lemma}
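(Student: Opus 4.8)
The statement asserts that ${\rm spec}(\mathcal{L}(\eps))$ is invariant under both $\lambda\mapsto\lambda^*$ and $\lambda\mapsto-\lambda$; composing the two yields $\lambda\mapsto-\lambda^*$, so it suffices to establish the two reflections separately. In each case the plan is to exhibit an (anti)linear isomorphism of $X$ that conjugates $\mathcal{L}(\lambda,\eps)$ to $\mathcal{L}(\lambda',\eps)$ for the appropriate $\lambda'$; non-invertibility then transfers automatically, so that $\lambda\in{\rm spec}(\mathcal{L}(\eps))$ forces $\lambda'\in{\rm spec}(\mathcal{L}(\eps))$.

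\emph{Reflection about the real axis.} Every coefficient appearing in $\mathbf{L}(\lambda)$ and $\mathbf{B}(x;\lambda,\eps)$ is a real-analytic (indeed rational) function of $\lambda$ with real coefficients, because the Stokes profile $\phi(\eps),u(\eps),\eta(\eps)$, the constant $\mu(\eps)$, and $\eps$ itself are real. Hence, writing $C$ for the antilinear conjugation $(C\mathbf{u})(x)=\overline{\mathbf{u}(x)}$ on $X$, one checks directly that $C\,\mathcal{L}(\lambda,\eps)\,C=\mathcal{L}(\lambda^*,\eps)$ (the only nontrivial point is $\overline{\mathbf{L}(\lambda)}=\mathbf{L}(\lambda^*)$ and $\overline{\mathbf{B}(x;\lambda,\eps)}=\mathbf{B}(x;\lambda^*,\eps)$, i.e. conjugating coefficients is the same as replacing $\lambda$ by $\lambda^*$). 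Since $C$ is an antilinear isometric involution, $\mathcal{L}(\lambda,\eps)$ is invertible if and only if $\mathcal{L}(\lambda^*,\eps)$ is, giving $\lambda^*\in{\rm spec}(\mathcal{L}(\eps))$.

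\emph{Reflection about the imaginary axis.} This encodes the reversibility of the water-wave problem. On $Y$, set $S=\diag(1,-1,-1)$ acting on $\mathbf{u}=(\phi,\upsilon,\eta)$, and define the linear involution $(\mathcal{R}\mathbf{u})(x)=S\,\mathbf{u}(-x)$ on $X$; since $S^2=\Id$ and $x\mapsto-x$ is an involution, $\mathcal{R}^2=\Id$, and $\mathcal{R}$ preserves ${\rm dom}(\mathcal{L})$ because it respects both $\phi_y(0)=0$ and $\eta-\upsilon(1)=0$ (indeed $\tilde\eta-\tilde\upsilon(1)=-(\eta(-\cdot)-\upsilon(-\cdot,1))=0$). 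A direct computation from \eqref{def:L} gives $-S\,\mathbf{L}(\lambda)\,S=\mathbf{L}(-\lambda)$. Combined with the parity identity $-S\,\mathbf{B}(-x;\lambda,\eps)\,S=\mathbf{B}(x;-\lambda,\eps)$, and using that $\partial_x$ picks up a sign under $\mathbf{u}\mapsto\mathbf{u}(-\cdot)$, one obtains $\mathcal{R}\,\mathcal{L}(\lambda,\eps)\,\mathcal{R}=-\mathcal{L}(-\lambda,\eps)$. As $\mathcal{R}$ is a linear isomorphism, $\mathcal{L}(\lambda,\eps)$ is invertible if and only if $\mathcal{L}(-\lambda,\eps)$ is, so $-\lambda\in{\rm spec}(\mathcal{L}(\eps))$; equivalently, in the Floquet picture $\mathcal{R}$ carries a bounded Floquet solution of \eqref{eqn:LB} at $(\lambda,k)$ to one at $(-\lambda,-k)$, and $C$ carries $(\lambda,k)$ to $(\lambda^*,-k)$.

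The main obstacle is the parity identity $-S\,\mathbf{B}(-x;\lambda,\eps)\,S=\mathbf{B}(x;-\lambda,\eps)$. Unlike the $\eps=0$ relation for $\mathbf{L}$, which is immediate, this requires tracking how the reflection $x\mapsto-x$ together with $S$ acts through the rather involved coefficients of $\mathbf{B}$ assembled in \S\ref{sec:linearize} from \eqref{def:tildeu}--\eqref{def:u_x}. I would organize this structurally rather than by brute force: the nonlinear system \eqref{eqn:ww} is reversible under $(x,t,\phi,\eta)\mapsto(-x,-t,-\phi,\eta)$, and by \eqref{def:stokes1} (and the higher-order terms in Appendix~\ref{A:stokes}) the Stokes wave is a symmetric fixed point of this reversor, with $\phi_j$ odd and $\eta_j$ even in $x$. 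The linearization about a symmetric fixed point inherits the reversor, and the parity identity is precisely that inherited reversibility transcribed to the first-order-in-$x$ variables $(\phi,\upsilon,\eta)$; one then only needs to confirm that the passage from $u$ to $\upsilon$ through \eqref{def:tildeu} is compatible with $S$, which the preserved boundary relation $\eta-\upsilon(1)=0$ already signals.
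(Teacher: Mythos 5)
Your proposal is correct, and it rests on exactly the two symmetries that the paper's proof in Appendix~\ref{A:symm} uses: complex conjugation (all coefficients depend on $\lambda$ with real coefficients, since the Stokes profile, $\mu(\eps)$ and $\eps$ are real) and the spatial reversor that flips $x$ and the sign of $\phi$ while sending $\lambda\mapsto-\lambda$. The difference is where the computation is carried out. The paper stays with the explicit system \eqref{eqn:spec} in the variables $(\phi,u,\eta)$ and transports bounded Floquet solutions: $\mathbf{u}\mapsto\mathbf{u}^*$ sends $(\lambda,k)$ to $(\lambda^*,-k)$, and $(\phi,u,\eta)(x)\mapsto(-\phi(-x),u(-x),\eta(-x))$ sends $(\lambda,k)$ to $(-\lambda,-k)$; because \eqref{eqn:spec} is written out term by term, the invariance is checked by inspection and no property of $\mathbf{B}(x;\lambda,\eps)$ is ever needed. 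You instead conjugate the abstract operator $\mathcal{L}(\lambda,\eps)$ on $X$, which obliges you to prove the parity identity $-S\,\mathbf{B}(-x;\lambda,\eps)\,S=\mathbf{B}(x;-\lambda,\eps)$ for an operator whose formula the paper never displays; your structural justification---reversibility of \eqref{eqn:ww} under $(x,t,\phi,\eta)\mapsto(-x,-t,-\phi,\eta)$, the Stokes wave being a symmetric fixed point with $\phi_j$ odd and $\eta_j$ even, and compatibility of the substitution \eqref{def:tildeu} with the reflection---is sound, and is in effect the paper's solution-level check performed one level of abstraction higher. What the paper's route buys is brevity (nothing about $\mathbf{B}$ to verify); what yours buys is the symmetries recorded as operator identities, including how the Floquet exponent transforms. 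One cosmetic point: the transformation actually induced on $(\phi,\upsilon,\eta)$ is $(\phi,\upsilon,\eta)(x)\mapsto(-\phi(-x),\upsilon(-x),\eta(-x))$---from \eqref{def:tildeu} one checks that $\upsilon$ transforms with the same sign as $\eta$, as your domain-preservation remark anticipates---so your $S=\diag(1,-1,-1)$ is the negative of this matrix; by linearity the overall sign cancels in $\mathcal{R}\,\mathcal{L}(\lambda,\eps)\,\mathcal{R}$, so the discrepancy is harmless.
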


\begin{remark*}\rm
A Stokes wave of sufficiently small amplitude is spectrally stable if and only if ${\rm spec}(\mathcal{L}(\eps))\subset i\mathbb{R}$ for $\eps\in\mathbb{R}$ and $|\eps|\ll1$.
\end{remark*}

The proof of Lemma~\ref{lem:symm} is in Appendix~\ref{A:symm}. The symmetries of the spectrum also follow from that the water wave problem (see \eqref{eqn:ww;1}) is Hamiltonian \cite{Zakharov;ww}. 

In Section~\ref{sec:eps=0} we focus the attention on $\eps=0$ and define the eigenspace of $\mathbf{L}(\lambda): {\rm dom}(\mathbf{L})\subset Y\to Y$ associated with its finitely many and purely imaginary eigenvalues. In Section~\ref{sec:reduction} we turn to $\eps\neq0$ and take a center manifold reduction approach (see \cite{Mielke;reduction}, among others) to reduce \eqref{eqn:LB} to finite dimensions (see \eqref{eqn:LB;u1}), whereby we introduce Gardner's periodic Evans function (see, for instance, \cite{Gardner;evans1}). In Section~\ref{sec:BF} and Section~\ref{sec:high-freq} we make the power series expansion of the periodic Evans function to locate and track the spectrum of $\mathcal{L}(\eps)$ for $\eps\in\mathbb{R}$ and $|\eps|\ll1$. 

\subsection{The spectrum of \texorpdfstring{$\mathcal{L}(0)$}{Lg}. The reduced space}\label{sec:eps=0}

When $\eps=0$, 
$\lambda\in{\rm spec}(\mathcal{L}(0))$ if and only if 
\[
ik\mathbf{u}=\mathbf{L}(\lambda)\mathbf{u},
\quad\text{where}\quad \mathbf{L}(\lambda):{\rm dom}(\mathbf{L})\subset Y\to Y
\] 
is in \eqref{def:L}, \eqref{def:Y}, \eqref{def:dom}, admits a nontrivial solution for some $k\in\mathbb{R}$. In other words, $ik$ is an eigenvalue of $\mathbf{L}(\lambda)$. We pause to remark that $\mathbf{L}(\lambda)$ has compact resolvent, so that the spectrum consists of discrete eigenvalues with finite multiplicities. A straightforward calculation reveals that 
\begin{equation}\label{eqn:sigma}
\lambda=i\sigma,\quad\text{where}\quad
\sigma\in\mathbb{R}\quad\text{and}\quad(\sigma-k)^2=\mu_0k\tanh(k).
\end{equation}
Therefore ${\rm spec}(\mathcal{L}(0))=i\mathbb{R}$, implying that the Stokes wave of zero amplitude is spectrally stable. 

\begin{figure}[htbp]
\begin{center}
\includegraphics[scale=0.45]{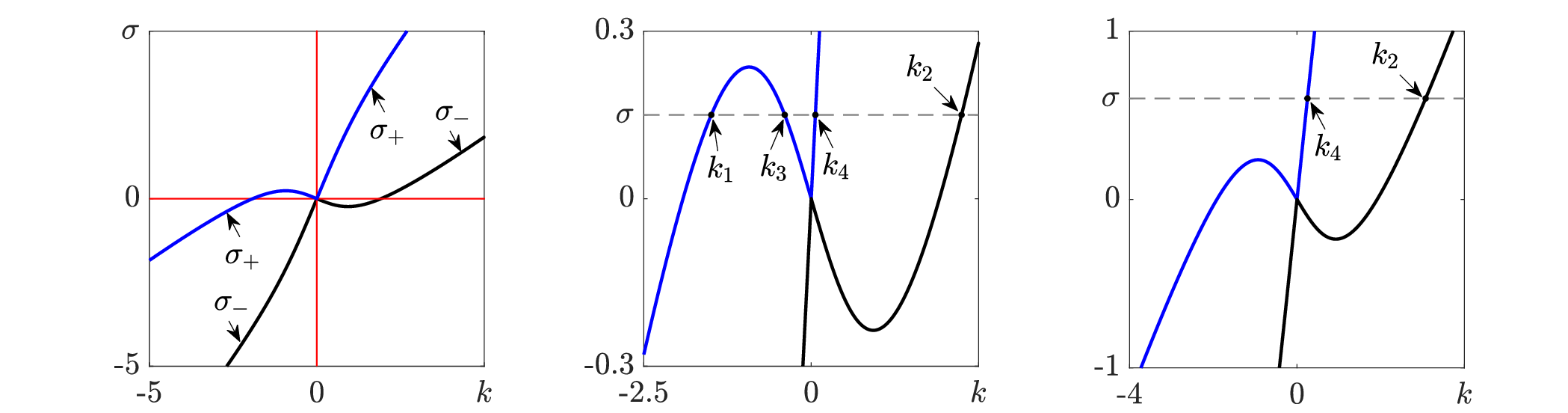}
\caption{Left: The graphs of $\sigma_+(k)$ (blue) and $\sigma_-(k)$ (black) for $\mu_0=2$. Middle: When $0<\sigma<\sigma_c$, $\sigma_\pm(k)=\sigma$ have four zeros $k_j(\sigma)$, $j=1,2,3,4$. Right: When $\sigma>\sigma_c$, $\sigma_\pm(k)=\sigma$ have two zeros $k_j(\sigma)$, $j=2,4$.}\label{fig:dispersion}
\end{center}
\end{figure}

Let
\begin{equation}\label{def:sigma}
\sigma_\pm(k)=k\pm\sqrt{\mu_0 k\tanh(k)},\quad\text{where}\quad k\in \mathbb{R}.
\end{equation}
The left panel of Figure~\ref{fig:dispersion} shows the graphs of $\sigma_\pm$ for $\mu_0=2$. Let 
\[
\sigma_c=\sigma_+(k_c), \quad\text{such that}\quad \frac{d\sigma_+}{dk}(k_c)=0. 
\]
In other words, $\sigma_c$ is a (unique) critical value of $\sigma_+$. By symmetry, $-\sigma_c=\sigma_-(-k_c)$ is a critical value of $\sigma_-$. Below it suffices to take $\sigma\geq0$. Notice that:
\begin{enumerate}
\item When $\sigma=0$, $\sigma_+(k)=0$ has one simple zero at $k=0$, and one negative simple zero at $k=-\kappa$, by \eqref{def:mu0}, and $\sigma_-(k)=0$ has one simple zero at $k=0$ and one positive simple zero at $k=\kappa$;
\item When $0<\sigma<\sigma_c$, $\sigma_+(k)=\sigma$ 
has three simple zeros, two negative and one positive, and $\sigma_-(k)=\sigma$ has one positive simple zero; 
\item When $\sigma=\sigma_c$, $\sigma_+(k)=\sigma_c$ has one negative double zero at $k=k_c$ and one positive simple zero, and $\sigma_-(k)=\sigma_c$ has one positive simple zero; 
\item When $\sigma>\sigma_c$, $\sigma_+(k)=\sigma$ has one positive simple zero, and $\sigma_-(k)=\sigma$ has one positive simple zero.
\end{enumerate} 
Let $k_2(\sigma)>0$ denote the simple zero of $\sigma_-(k)=\sigma(\geq0)$, and let $k_4(\sigma)>0$ be the simple zero of $\sigma_+(k)=\sigma(>0)$, and $k_4(0)=0$. When $0\leq\sigma\leq\sigma_c$, let $k_1(\sigma)\leq k_3(\sigma)\leq0$ be the other two zeros of $\sigma_+(k)=\sigma$. See the middle and right panels of Figure~\ref{fig:dispersion}. Therefore:
\begin{enumerate}
\item When $\sigma=0$, $k_j(0)=(-1)^j\kappa$, $j=1,2$, and $k_j(0)=0$, $j=3,4$; 
\item When $0<\sigma<\sigma_c$, $\sigma_-(k_2)=\sigma_+(k_j)=\sigma$, $j=1,3,4$, and $k_1<k_3<0<k_4<k_2$;
\item When $\sigma=\sigma_c$, $k_j(\sigma_c)=k_c$, $j=1,3$, and $\sigma_-(k_2)=\sigma_+(k_4)=\sigma_c$, $k_1=k_3<0<k_4<k_2$;
\item When $\sigma>\sigma_c$, $\sigma_-(k_2)=\sigma_+(k_4)=\sigma$ and $0<k_4<k_2$.
\end{enumerate}


\begin{lemma}[Spectrum of $\mathbf{L}({i\sigma})$]\label{lem:eps=0} 
When $\sigma=0$, $ik_j(0)=(-1)^ji \kappa$, $j=1,2$, are simple eigenvalues of $\mathbf{L}(0):{\rm dom}(\mathbf{L})\subset Y\to Y$, and
\begin{equation}\label{def:phi12}
\ker(\mathbf{L}(0)-ik_j(0)\mathbf{1})=\ker((\mathbf{L}(0)-ik_j(0)\mathbf{1})^2)={\rm span}\{\boldsymbol{\phi}_j(0)\},\quad
\boldsymbol{\phi}_j(0)=\begin{pmatrix}
\mu_0\cosh(k_j(0)y)\\ik_j(0)\cosh(k_j(0)y)\\ik_j(0)\cosh(k_j(0))\end{pmatrix},
\end{equation}
where $\mathbf{1}$ denotes the identity operator.
Also $ik_j(0)=0$, $j=3,4$, is an eigenvalue of $\mathbf{L}(0)$ with algebraic multiplicity $2$ and geometric multiplicity $1$, and 
\[
\ker(\mathbf{L}(0)^2)=\ker(\mathbf{L}(0)^3)
={\rm span}\{\boldsymbol{\phi}_3(0), \boldsymbol{\phi}_4(0)\},
\]
where
\begin{equation}\label{def:phi34}
\boldsymbol{\phi}_3(0)=\begin{pmatrix} 0 \\ 1 \\ 1 \end{pmatrix}\quad\text{and}\quad
\boldsymbol{\phi}_4(0)=\begin{pmatrix} \mu_0 \\ 0 \\ 0 \end{pmatrix}.
\end{equation}

When $0<\sigma<\sigma_c$, $ik_j(\sigma)$, $j=1,2,3,4$, are simple eigenvalues of $\mathbf{L}(i\sigma)$, and
\begin{equation}\label{def:phi1-4}
\ker(\mathbf{L}(i\sigma)-ik_j(\sigma)\mathbf{1})=\ker((\mathbf{L}(i\sigma)-ik_j(\sigma)\mathbf{1})^2)
={\rm span}\{\boldsymbol{\phi}_j(\sigma)\},\quad
\boldsymbol{\phi}_j(\sigma)=\begin{pmatrix}
\mu_0\cosh(k_j(\sigma)y)\\i(k_j(\sigma)-\sigma)\cosh(k_j(\sigma)y)\\i(k_j(\sigma)-\sigma)\cosh(k_j(\sigma))\end{pmatrix}.
\end{equation}

When $\sigma>\sigma_c$, $ik_j(\sigma)$, $j=2,4$, are simple eigenvalues of $\mathbf{L}(i\sigma)$, and \eqref{def:phi1-4} holds true. 

When $\sigma=\sigma_c$, $ik_j(\sigma_c)$, $j=2,4$, are simple eigenvalues of $\mathbf{L}(i\sigma_c)$, and \eqref{def:phi1-4} holds true. Also $ik_c$ is an eigenvalue with algebraic multiplicity $2$ and geometric multiplicity $1$, and
\[
\ker((\mathbf{L}(i\sigma_c)-ik_c\mathbf{1})^2)=\ker((\mathbf{L}(i\sigma_c)-ik_c\mathbf{1})^3)
={\rm span}\{\boldsymbol{\phi}_1(\sigma_c), \boldsymbol{\phi}_3(\sigma_c)\},
\]
where
\begin{equation}\label{def:phi13}
\boldsymbol{\phi}_1(\sigma_c)=\begin{pmatrix}
\mu_0\cosh(k_cy) \\ i(k_c-\sigma_c)\cosh(k_cy) \\ i(k_c-\sigma_c)\cosh(k_c)\end{pmatrix}
\quad\text{and}\quad 
\boldsymbol{\phi}_3(\sigma_c)=\begin{pmatrix}
-i\mu_0y\sinh(k_cy) \\ \cosh(k_cy)+(k_c-\sigma_c)y\sinh(k_cy) \\ \cosh(k_c)+(k_c-\sigma_c)\sinh(k_c)\end{pmatrix}.
\end{equation}
\end{lemma}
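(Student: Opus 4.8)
The plan is to turn the abstract eigenvalue problem $\mathbf{L}(i\sigma)\mathbf{u}=ik\mathbf{u}$ into a scalar constant-coefficient ODE, and to read off both the eigenvalues and their Jordan structure from a single reduced dispersion function. First I would use the first component of \eqref{def:L} to solve $\upsilon=i(k-\sigma)\mu_0^{-1}\phi$, substitute this into the second component, and find that $\phi$ satisfies $\phi_{yy}-k^2\phi=0$ with the boundary condition $\phi_y(0)=0$ coming from \eqref{def:dom}; hence $\phi(y)=A\cosh(ky)$ (and $\phi\equiv A$ when $k=0$). Feeding this together with the domain constraint $\eta=\upsilon(1)$ into the third component then forces $A\big[(\sigma-k)^2\cosh k-\mu_0 k\sinh k\big]=0$, so a nontrivial eigenvector exists precisely when $k$ is a root of the reduced dispersion function $d(k,\sigma):=(\sigma-k)^2\cosh k-\mu_0 k\sinh k$, which vanishes exactly on the curve $(\sigma-k)^2=\mu_0 k\tanh k$ of \eqref{eqn:sigma}. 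Normalizing $A=\mu_0$ reproduces the eigenvectors $\boldsymbol{\phi}_j(\sigma)$ of \eqref{def:phi12} and \eqref{def:phi1-4}, and the enumeration of the roots $k_j(\sigma)$ established just before the lemma dictates which eigenvalues are present in each of the four regimes $\sigma=0$, $0<\sigma<\sigma_c$, $\sigma=\sigma_c$, $\sigma>\sigma_c$. Since $\mathbf{L}(i\sigma)$ has compact resolvent, each generalized eigenspace is finite dimensional, so every Jordan chain must terminate.

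The heart of the argument is to tie the Jordan structure at each eigenvalue to the order of vanishing of $d(\cdot,\sigma)$. To decide whether $ik_j$ carries a Jordan block I would solve the generalized-eigenvector equation $(\mathbf{L}(i\sigma)-ik_j\mathbf{1})\mathbf{w}=\boldsymbol{\phi}_j(\sigma)$. The same elimination as above converts this into the inhomogeneous ODE $\psi_{yy}-k_j^2\psi=-2ik_j\mu_0\cosh(k_j y)$ with $\psi_y(0)=0$, whose resonant forcing produces the particular solution $-i\mu_0\,y\sinh(k_j y)$ on top of the free multiple $B\cosh(k_j y)$ of the eigenvector. Imposing the remaining condition at $y=1$ together with the domain constraint, the coefficient of $B$ cancels identically by virtue of $d(k_j,\sigma)=0$, and the surviving solvability condition turns out to be exactly $\partial_k d(k_j,\sigma)=0$. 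Consequently $ik_j$ is algebraically simple precisely when $k_j$ is a simple root of $d(\cdot,\sigma)$, which yields $\ker(\mathbf{L}(i\sigma)-ik_j\mathbf{1})=\ker((\mathbf{L}(i\sigma)-ik_j\mathbf{1})^2)$ in all the nondegenerate cases.

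It then remains to treat the two degenerate configurations, where two roots $k_j(\sigma)$ collide. At $\sigma=0$ the factorization $d(k,0)=k\,[k\cosh k-\mu_0\sinh k]$ shows, since $\mu_0=\kappa\coth\kappa>1$ by \eqref{def:mu0}, that $k=0$ is a double root; here the $\cosh(ky)$ ansatz degenerates, the genuine eigenvector is the limit $\boldsymbol{\phi}_4(0)=(\mu_0,0,0)^{\top}$, and solving $\mathbf{L}(0)\mathbf{w}=\boldsymbol{\phi}_4(0)$ directly gives the generalized eigenvector $\boldsymbol{\phi}_3(0)=(0,1,1)^{\top}$ of \eqref{def:phi34}. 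At $\sigma=\sigma_c$ the defining relation $\tfrac{d\sigma_+}{dk}(k_c)=0$ is equivalent to $\partial_k d(k_c,\sigma_c)=0$ (because $\partial_\sigma d=2(\sigma_c-k_c)\cosh k_c\neq0$ there), so $k_c$ is a double root, and the particular solution above taken with $B=0$ produces $\boldsymbol{\phi}_3(\sigma_c)$ of \eqref{def:phi13}. In both cases I would confirm that the algebraic multiplicity is exactly $2$ by checking that the next chain equation $(\mathbf{L}(i\sigma)-ik\mathbf{1})\mathbf{w}=\boldsymbol{\phi}_3$ is unsolvable, which amounts to $\partial_k^2 d\neq0$: for $\sigma=0$ this reads $2(1-\mu_0)\neq0$, immediate from $\mu_0>1$, and for $\sigma=\sigma_c$ it holds because $k_c$ is a nondegenerate critical point of $\sigma_+$. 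The geometric multiplicity is $1$ in each case since the single homogeneous direction is $\cosh(ky)$.

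The step I expect to be the main obstacle is the solvability computation of the second paragraph: carrying the inhomogeneous boundary-value problem through and recognizing---after the $B$-terms cancel by the dispersion relation---that the residual obstruction is precisely $\partial_k d$. This identification is what cleanly ties algebraic multiplicity to root order and lets the four-case bookkeeping, as well as the geometric-multiplicity-one claims at $k=0$ and $k=k_c$, fall out uniformly rather than requiring a separate ad hoc computation in each regime.
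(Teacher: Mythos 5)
Your proposal is correct, and its computational heart coincides with the paper's own proof in Appendix~\ref{A:eps=0}: there, too, the generalized-eigenvector equation at $\sigma=\sigma_c$ is solved with the ansatz $\phi(y)=c\mu_0\cosh(k_cy)-i\mu_0y\sinh(k_cy)$, the coefficient of the free constant $c$ cancels by the dispersion relation \eqref{eqn:sigma}, and the residual solvability condition is shown to vanish precisely because $\tfrac{d\sigma_+}{dk}(k_c)=0$; indeed the paper's displayed condition $(\mu_0-(\sigma_c-k_c)^2)\tanh(k_c)+2\sigma_c-2k_c+\mu_0k_c=0$ is exactly $-\partial_kd(k_c,\sigma_c)/\cosh(k_c)=0$ in your notation. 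Where you genuinely differ is in organization and coverage. The paper details only the $\sigma=\sigma_c$ Jordan block, declares \eqref{def:phi12}, \eqref{def:phi34}, \eqref{def:phi1-4} ``rudimentary,'' and asserts without computation that $\ker((\mathbf{L}(i\sigma_c)-ik_c\mathbf{1})^3)=\ker((\mathbf{L}(i\sigma_c)-ik_c\mathbf{1})^2)$; your single reduced dispersion function $d(k,\sigma)=(\sigma-k)^2\cosh k-\mu_0k\sinh k$, together with the uniform principle ``geometric multiplicity is always $1$; a chain of length $\geq2$ exists iff $\partial_kd=0$; a chain of length $\geq3$ exists iff $\partial_k^2d=0$,'' treats all four regimes at once and supplies the omitted verifications, e.g.\ termination of both chains from $\partial_k^2d(0,0)=2(1-\mu_0)\neq0$ and $\partial_k^2d(k_c,\sigma_c)=-2(\sigma_c-k_c)\cosh(k_c)\,\sigma_+''(k_c)\neq0$. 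Two points should be tightened before this counts as a complete proof. First, the step ``the third chain equation is unsolvable iff $\partial_k^2d\neq0$'' is asserted rather than proved; it does hold, either by one more resonant-forcing computation of the kind you already carried out, or more cleanly from the identity $(\mathbf{L}(i\sigma)-ik\mathbf{1})\boldsymbol{\phi}(k)=d(k,\sigma)\,(0,0,1)^{\top}$ satisfied by the analytic family $\boldsymbol{\phi}(k)$ of candidate eigenvectors (which lies in ${\rm dom}(\mathbf{L})$ for every $k$), differentiated once and twice in $k$, combined with the fact---immediate from your own elimination---that $(0,0,1)^{\top}$ is not in the range of $\mathbf{L}(i\sigma)-ik_0\mathbf{1}$ when $d(k_0,\sigma)=0$. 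Second, the nondegeneracy $\sigma_+''(k_c)\neq0$ that you invoke at $\sigma=\sigma_c$ is itself only encoded in the paper's pre-lemma root count (the root at $k_c$ is exactly double), so it should be cited as such or checked directly; with these two items filled in, your argument is complete and in fact proves more than the paper writes down.
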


The proof is in Appendix~\ref{A:eps=0}. 

\begin{definition}[Eigenspace and projection]\label{def:proj}\rm
Let $\sigma\geq0$, and $Y(\sigma)$ denote the ({\em generalized}) {\em eigenspace} of $\mathbf{L}(i\sigma): {\rm dom}(\mathbf{L})\subset Y\to Y$ associated with its finitely many and purely imaginary eigenvalues. Let 
\[
\boldsymbol{\Pi}(\sigma): {\rm dom}(\mathbf{L})\subset Y \to Y(\sigma)
\]
be the {\em projection} of ${\rm dom}(\mathbf{L})$ onto $Y(\sigma)$, which commutes with $\mathbf{L}(i\sigma)$. 
\end{definition}

Lemma~\ref{lem:eps=0} says that:
\begin{enumerate}
\item When $\sigma=0$, $Y(0)={\rm span}\{\boldsymbol{\phi}_j(0):j=1,2,3,4\}$, where $\boldsymbol{\phi}_j(0)$ is in \eqref{def:phi12} and \eqref{def:phi34};
\item When $0<\sigma<\sigma_c$, $Y(\sigma)={\rm span}\{\boldsymbol{\phi}_j(\sigma):j=1,2,3,4\}$, where $\boldsymbol{\phi}_j(\sigma)$ is in \eqref{def:phi1-4};
\item When $\sigma=\sigma_c$, $Y(\sigma_c)={\rm span}\{\boldsymbol{\phi}_j(\sigma_c):j=1,2,3,4\}$, where $\boldsymbol{\phi}_j(\sigma_c)$ is in \eqref{def:phi1-4} and \eqref{def:phi13}; 
\item When $\sigma>\sigma_c$, $Y(\sigma)={\rm span}\{\boldsymbol{\phi}_j(\sigma):j=2,4\}$, where $\boldsymbol{\phi}_j(\sigma)$ is in \eqref{def:phi1-4}.
\end{enumerate}
The formulae of $\boldsymbol{\Pi}(\sigma)$ are in Section~\ref{sec:proj}. 

By symmetry, $ik_j(\sigma)=-ik_j(-\sigma)$, whereby Lemma~\ref{lem:eps=0} and Definition~\ref{def:proj} extend to $\sigma<0$. 

\subsection{Reduction of the spectral problem. The periodic Evans function}\label{sec:reduction}

We turn the atten-tion to $|\eps|\neq0,\ll1$. Let
\[
\lambda=i\sigma+\delta,\quad\text{where}\quad \sigma\in\mathbb{R}, 
\quad\delta\in \mathbb{C}\quad\text{and}\quad |\delta|\ll1,
\] 
and we rewrite \eqref{eqn:LB} as 
\be \label{eqn:LB;delta}
\mathbf{u}_x=
\mathbf{L}(i\sigma)\mathbf{u}+(\mathbf{L}(i\sigma+\delta)-\mathbf{L}(i\sigma))\mathbf{u}
+\mathbf{B}(x;i\sigma+\delta,\eps)\mathbf{u}(x)
=:\mathbf{L}(i\sigma)\mathbf{u}+\mathbf{B}(x;\sigma,\delta,\eps)\mathbf{u}(x),
\ee 
where $\mathbf{L}(i\sigma):{\rm dom}(\mathbf{L})\subset Y\to Y$ is in \eqref{def:L}, \eqref{def:Y}, \eqref{def:dom}, and 
\[
\mathbf{B}(x;\sigma,\delta,\eps):\mathbb{R}\times {\rm dom}(\mathbf{L})\subset \mathbb{R}\times Y\to Y.
\]
Notice that $\mathbf{B}(x;\sigma,\delta,\eps)$ is smooth and $T(=2\pi/\kappa)$ periodic in $x$, and it depends analytically on $\sigma$, $\delta$ and $\eps$. Also $\mathbf{B}(x;\sigma,0,0)=\mathbf{0}$. Our proofs do not involve all the details of $\mathbf{B}(x;\sigma,\delta,\eps)$, and rather its leading order terms as $\delta,\eps\to 0$, whence we do not include the formula here. But see, for instance, \eqref{def:B;exp0}, \eqref{def:B;expH} and Appendix~\ref{A:Bexp}.

For $\mathbf{u}(x)\in {\rm dom}(\mathbf{L})$, $x\in\mathbb{R}$, 
let 
\begin{equation*}
\mathbf{v}(x)=\boldsymbol{\Pi}(\sigma)\mathbf{u}(x)\quad\text{and}\quad 
\mathbf{w}(x)=(\mathbf{1}-\boldsymbol{\Pi}(\sigma))\mathbf{u}(x),
\end{equation*}
where $\mathbf{1}$ denotes the identity operator, 
and \eqref{eqn:LB;delta} becomes
\ba\label{eqn:LB;u12}
{\mathbf{v}}_x=&\mathbf{L}(i\sigma)\mathbf{v}
+\boldsymbol{\Pi}(\sigma)\mathbf{B}(x;\sigma,\delta,\eps)(\mathbf{v}(x)+\mathbf{w}(x)),\\
{\mathbf{w}}_x=&\mathbf{L}(i\sigma)\mathbf{w}
+(\mathbf{1}-\boldsymbol{\Pi}(\sigma))\mathbf{B}(x;\sigma,\delta,\eps)(\mathbf{v}(x)+\mathbf{w}(x)).
\ea 
Recall from the previous subsection that
\[
Y(\sigma)=\begin{cases}{\rm span}\{\boldsymbol{\phi}_j(\sigma):j=1,2,3,4\} 
\quad&\text{if $|\sigma|\leq\sigma_c$}\\
{\rm span}\{\boldsymbol{\phi}_j(\sigma):j=2,4\} &\text{if $|\sigma|>\sigma_c$,}
\end{cases}
\] 
where $\boldsymbol{\phi}_j(\sigma)$ is in Lemma~\ref{lem:eps=0}, particularly, $Y(\sigma)$ is finite dimensional. 
We appeal to, for instance, \cite[Theorem~1]{Mielke;reduction}, and for $\sigma\in\mathbb{R}$, $\delta\in\mathbb{C}$ and $|\delta|\ll1$, for $\eps\in\mathbb{R}$ and $|\eps|\ll1$, there exists 
\begin{equation}\label{def:u2}
\mathbf{w}(x,\mathbf{v}(x);\sigma,\delta,\eps): \mathbb{R}\times Y(\sigma) \to {\rm dom}(\mathbf{L})
\end{equation}
such that $\mathbf{v}+\mathbf{w}$ makes a bounded solution of \eqref{eqn:LB;delta} and, hence, \eqref{eqn:LB}, provided that
\be \label{eqn:LB;u1}
{\mathbf{v}}_x=\mathbf{L}(i\sigma)\mathbf{v}
+\boldsymbol{\Pi}(\sigma)\mathbf{B}(x;\sigma,\delta,\eps)
(\mathbf{v}(x)+\mathbf{w}(x,\mathbf{v}(x);\sigma,\delta,\eps)).
\ee 
Therefore we turn \eqref{eqn:LB;delta}, for which $\mathbf{u}(x)\in{\rm dom}(\mathbf{L})$, $x\in\mathbb{R}$, and $\dim({\rm dom}(\mathbf{L}))=\infty$, into \eqref{eqn:LB;u1}, for which $\mathbf{v}(x)\in Y(\sigma)$ and $\dim(Y(\sigma))<\infty$. 

For $\mathbf{v}(x)\in Y(\sigma)$, $x\in\mathbb{R}$, let 
\begin{equation*}
\mathbf{v}(x)=\sum_j a_j(x)\boldsymbol{\phi}_j(\sigma)\quad\text{and}\quad \mathbf{a}(x)=(a_j(x)),
\end{equation*}
and we may further rewrite \eqref{eqn:LB;u1} as
\be\label{eqn:A}
\mathbf{a}_x=\mathbf{A}(x;\sigma,\delta,\eps)\mathbf{a},
\ee 
where $\mathbf{A}(x;\sigma,\delta,\eps)$ is a square matrix of order $4$ if $|\sigma|\leq \sigma_c$, and order $2$ if $|\sigma|>\sigma_c$. Notice that $\mathbf{A}(x;\sigma,\delta,\eps)$ is smooth and $T(=2\pi/k)$ periodic in $x$, and it depends analytically on $\sigma$, $\delta$ and $\eps$. Our proofs do not involve all the details of $\mathbf{A}(x;\sigma,\delta,\eps)$ and, rather, its leading order terms as $\delta,\eps\to0$, whence we do not include the formula here. But see, for instance, \eqref{eqn:a}, \eqref{def:f0} and \eqref{eqn:aH}, \eqref{def:fH}. By Floquet theory, if $\mathbf{a}$ is a bounded solution of \eqref{eqn:A} then, necessarily, 
\[
\mathbf{a}(x+T)=e^{ik T}\mathbf{a}(x)\quad\text{for some $k\in\mathbb{R}$},
\quad\text{where $T=2\pi/\kappa$}
\]
is the period of the periodic wave. 
 
Following \cite{Gardner;evans1,Gardner;evans2} and others we take a periodic Evans function approach. 

\begin{definition}[The periodic Evans function]\label{def:Evans}\rm
For $\lambda=i\sigma+\delta$, $\sigma\in \mathbb{R}$, $\delta\in\mathbb{C}$ and $|\delta|\ll1$, for $\eps\in\mathbb{R}$ and $|\eps|\ll1$, let $\mathbf{X}(x;\sigma,\delta,\eps)$ denote the fundamental matrix of \eqref{eqn:A} such that $\mathbf{X}(0;\sigma,\delta,\eps)=\mathbf{I}$, where $\mathbf{I}$ is the identity matrix. Let $\mathbf{X}(T;\sigma,\delta,\eps)$ be the {\em monodromy matrix} for \eqref{eqn:A}, and for $k\in\mathbb{R}$,
\be\label{def:Delta}
\Delta(\lambda,k;\eps)=\det(e^{ikT}\mathbf{I}-\mathbf{X}(T;\sigma,\delta,\eps))
\ee 
the {\em periodic Evans function}, where $T=2\pi/\kappa$ is the period of the Stokes wave of sufficiently small amplitude.
\end{definition}

Since $\mathbf{A}(x;\sigma,\delta,\eps)$ depends analytically on $\sigma$, $\delta$ and $\epsilon$ for any $x\in\mathbb{R}$, 
so do $\mathbf{X}(T;\sigma,\delta,\eps)$ and, hence, $\Delta(\lambda,k;\eps)$ depends analytically on $\lambda$, $k$ and $\eps$, where $\lambda=i\sigma+\delta$ and $k\in\mathbb{R}$.  
By Floquet theory and, for instance, \cite[Theorem~1]{Mielke;reduction}, for $\eps\in\mathbb{R}$ and $|\eps|\ll1$, $\lambda\in{\rm spec}(\mathcal{L}(\eps))$ if and only if 
\[
\Delta(\lambda,k;\eps)=0\quad\text{for some $k\in\mathbb{R}$}.
\]
See, for instance, \cite{Gardner;evans1} for more details. 

\begin{remark*}\rm
A Stokes wave of sufficiently small amplitude is spectrally stable if and only if 
\begin{equation*}
{\rm spec}(\mathcal{L}(\eps))=\{\lambda\in\mathbb{C}: \Delta(\lambda,k;\eps)=0\quad\text{for some $k\in\mathbb{R}$}\}\subset i\mathbb{R}
\end{equation*}
for $\eps\in\mathbb{R}$ and $|\eps|\ll1$.
\end{remark*}

In what follows, we identify ${\rm spec}(\mathcal{L}(\eps))$ with the zeros of the periodic Evans function.

One should not expect to be able to evaluate the periodic Evans function except for few cases, for instance, completely integrable PDEs. When $\eps\in\mathbb{R}$ and $|\eps|\ll1$, on the other hand, we shall use the result of Section~\ref{sec:Stokes} and determine \eqref{def:Delta}. 

\subsection{Computation of \texorpdfstring{$\boldsymbol{\Pi}(\sigma)$}{Lg}}\label{sec:proj}

We begin by constructing the adjoint of $\mathbf{L}(\lambda):{\rm dom}(\mathbf{L})\subset Y\to Y$. For $\mathbf{u}_1:=\begin{pmatrix}\phi_1\\ \upsilon_1 \\ \eta_1\end{pmatrix}, 
\mathbf{u}_2:=\begin{pmatrix}\phi_2\\ \upsilon_2\\ \eta_2 \end{pmatrix} \in Y$,  
we define the inner product as
\begin{equation}\label{def:inner}
\langle \mathbf{u}_1, \mathbf{u}_2\rangle
=\int^1_0(\phi_1\phi_2^*+{\phi_1}_y{\phi_2}_y^*)~dy+\int^1_0 \upsilon_1\upsilon_2^*~dy+\eta_1\eta_2^*,
\end{equation}
where the asterisk means complex conjugation. For $\mathbf{u}_1\in {\rm dom}(\mathbf{L})\subset Y$ and $\mathbf{u}_2\in Y$, 
\begin{align*}
\langle \mathbf{L}(\lambda)\mathbf{u}_1, \mathbf{u}_2\rangle
=&\left\langle\begin{pmatrix}\lambda\phi_1+\mu_0\upsilon_1\\
 -\mu_0^{-1}({\phi_1}_{yy}+\lambda^2\phi_1+\mu_0\lambda \upsilon_1) \\
\lambda\eta_1-{\phi_1}_y(1)\end{pmatrix},
\begin{pmatrix} \phi_2 \\ \upsilon_2 \\ \eta_2 \end{pmatrix}\right\rangle \\
=&\int^1_0(\phi_1(\lambda^*\phi_2)^*+{\phi_1}_y(\lambda^*{\phi_2}_y)^*
+\mu_0 \upsilon_1\phi_2^*-\mu_0 \upsilon_1{\phi_2}_{yy}^*)~dy\\
&+\int^1_0 (\mu_0^{-1}{\phi_1}_y{\upsilon_2}_y^*
-\phi_1(\mu_0^{-1}{\lambda^*}^2\upsilon_2)^*+\upsilon_1(\lambda^*\upsilon_2)^*)~dy\\
&+\mu_0\eta_1{\phi_2}_y^*(1)-\mu_0 \upsilon_1(0){\phi_2}_y^*(0)
+\lambda\eta_1\eta_2^*-{\phi_1}_y(1)(\mu_0^{-1}\upsilon_2(1)+\eta_2)^*\\
=&\int^1_0(\phi_1(\lambda^*\phi_2)^*+{\phi_1}_y(\lambda^*{\phi_2}_y)^*
-\phi_1(\mu_0^{-1}{\lambda^*}^2\upsilon_2)^*+\mu_0^{-1}{\phi_1}_y{\upsilon_2}_y^*)~dy\\
&+\int^1_0(\upsilon_1(\mu_0\phi_2-\mu_0{\phi_2}_{yy})^*+\upsilon_1(\lambda^*\upsilon_2)^*)~dy
+\lambda\eta_1\eta_2^*+\mu_0\eta_1{\phi_2}_y^*(1)\\
=&\left\langle \begin{pmatrix} \phi_1 \\ \upsilon_1 \\ \eta_1 \end{pmatrix}, 
\begin{pmatrix}\lambda^*\phi_2+\mu_0^{-1}\upsilon_2\\ \mu_0\phi_2-\mu_0{\phi_2}_{yy}-\lambda^*\upsilon_2\\\mu_0{\phi_2}_y(1)+\lambda^*\eta_2\end{pmatrix}\right\rangle
-\int^1_0\mu_0^{-1}\phi_1(1+{\lambda^*}^2)\upsilon_2^*~dy \\
=&\left\langle \begin{pmatrix} \phi_1 \\ \upsilon_1 \\ \eta_1 \end{pmatrix}, 
\begin{pmatrix}\lambda^*\phi_2+\mu_0^{-1}\upsilon_2\\ \mu_0\phi_2-\mu_0{\phi_2}_{yy}-\lambda^*\upsilon_2\\\mu_0{\phi_2}_y(1)+\lambda^*\eta_2\end{pmatrix}\right\rangle 
+\left\langle \begin{pmatrix} \phi_1 \\ \upsilon_1 \\ \eta_1 \end{pmatrix},
\begin{pmatrix}\phi_p\\ \upsilon_p\\ \eta_p \end{pmatrix}\right\rangle
=:\langle \mathbf{u}_1,\mathbf{L}(\lambda)^\dag\mathbf{u}_2\rangle,
\end{align*}
where $\mathbf{L}(\lambda)^\dag$ denotes the adjoint of $\mathbf{L}(\lambda)$. Here the first equality uses \eqref{def:L}, and the second equality uses \eqref{def:inner} and follows after integration by parts because if $\mathbf{u}_1\in {\rm dom}(\mathbf{L})$ then $\eta_1=\upsilon_1(1)$ and ${\phi_1}_y(0)=0$ (see \eqref{def:dom}). The third equality follows provided that 
\begin{equation}\label{eqn:bdry;adj}
\upsilon_2(1)+\mu_0\eta_2=0\quad\text{and}\quad {\phi_2}_y(0)=0,
\end{equation}
so that the inner product is continuous with respect to $\phi_1\in H^1(0,1)$ and $\upsilon_1\in L^2(0,1)$ (see \eqref{def:Y}),
and the fourth equality uses \eqref{def:inner}. The fifth equality follows provided that 
\begin{align*}
-\int^1_0\mu_0^{-1}\phi_1(1+{\lambda^*}^2)\upsilon_2^*~dy &=
\left\langle \begin{pmatrix} \phi_1 \\ \upsilon_1 \\ \eta_1 \end{pmatrix}, 
\begin{pmatrix}\phi_p\\ \upsilon_p\\ \eta_p \end{pmatrix}\right\rangle \\
&=\int^1_0 (\phi_1\phi_p^*+{\phi_1}_y{\phi_p}_y^*)~dy+\int^1_0 \upsilon_1\upsilon_p^*~dy+\eta_1\eta_p^*\\
&=\int^1_0\phi_1(\phi_p-{\phi_p}_{yy})^*~dy+\phi_1(1){\phi_p}_y^*(1)-\phi_1(0){\phi_p}_y^*(0),
\end{align*}
where the last equality assumes that $\upsilon_p=0$ and $\eta_p=0$, because the left side does not depend on $\upsilon_1$ or $\eta_1$, and it follows after integration by parts. This works provided that 
\begin{equation}\label{eqn:up}
\left\{\begin{aligned}
&{\phi_p}_{yy}-\phi_p=\mu_0^{-1}(1+{\lambda^*}^2)\upsilon_2\quad\text{for $0<y<1$},\\
&{\phi_p}_y(1)=0,\\
&{\phi_p}_y(0)=0.
\end{aligned}\right.
\end{equation}

To recapitulate, 
\[
\mathbf{L}(\lambda)^\dag:{\rm dom}(\mathbf{L}^\dag) \subset Y\to Y,
\] 
where 
\[
L(\lambda)^\dag\mathbf{u}
=\begin{pmatrix} \lambda^*\phi+\mu_0^{-1}\upsilon+\phi_p \\ \mu_0\phi-\mu_0\phi_{yy}-\lambda^* \upsilon \\ \mu_0\phi_y(1)+\lambda^*\eta \end{pmatrix},
\]
\begin{equation}\label{def:up}
\phi_p(y)=-\left(\int_0^1\cosh(1-y)\upsilon(y)~dy\right)\frac{(1+{\lambda^*}^2)\cosh(y)}{\mu_0\sinh(1)}+\frac{1+{\lambda^*}^2}{\mu_0}\int_0^y\sinh(y-y')\upsilon(y')~dy',
\ee 
and
\[
{\rm dom}(\mathbf{L}^\dag)=\{\mathbf{u}\in H^2(0,1)\times H^1(0,1)\times \mathbb{C}:
\upsilon(1)+\mu_0\eta=0, \phi_y(0)=0\}.
\]
Indeed we solve \eqref{eqn:up}, for instance, by the method of variation of parameters and evaluate the result at $\upsilon_2=\upsilon$, to obtain \eqref{def:up}. If $\mathbf{u}\in {\rm dom}(\mathbf{L}^\dag)$ then \eqref{eqn:bdry;adj} holds true. Clearly ${\rm dom}(\mathbf{L}^\dag)$ is dense in $Y$.

The spectrum of $\mathbf{L}(i\sigma)^\dag:{\rm dom}(\mathbf{L}^\dag)\subset Y\to Y$ consists of discrete eigenvalues with finite multiplicities, say, $\gamma \in\mathbb{C}$ and a straightforward calculation reveals that
\[
(\sigma-i\gamma)^2=\mu_0(i\gamma)\tanh(i\gamma).
\]
Therefore $\gamma=-ik$, where $ik$ is an eigenvalue of $\mathbf{L}(i\sigma):{\rm dom}(\mathbf{L})\subset Y\to Y$. Compare \eqref{eqn:sigma}. Also the corresponding eigenfunction is 
\[
\begin{pmatrix}
{\displaystyle -\frac{(1-\sigma^2)(\gamma+i\sigma)}{\mu_0^2(\gamma^2+1)}\upsilon(1)\frac{\cosh(y)}{\sinh(1)}+\frac{\gamma-i\sigma}{\mu_0(\gamma^2+1)}\upsilon(y)} \\ \upsilon(y) \\ -\frac{1}{\mu_0}\upsilon(1) \end{pmatrix},\]
where $\upsilon$ satisfies 
\begin{align*}
\upsilon(y)=\left(-\frac{(\gamma+i\sigma)^2}{\mu_0}\upsilon(1)+(\gamma^2+1)\int_0^1\cosh(1-y)\upsilon(y)~dy\right)&\frac{\cosh(y)}{\sinh(1)}\\-(\gamma^2+1)&\int_0^y\sinh(y-y')\upsilon(y')~dy'.
\end{align*}

When $\sigma=0$, we infer from Lemma~\ref{lem:eps=0} that $-ik_j(0)=i(-1)^{j+1}\kappa$, $j=1,2$, are simple eigenvalues of $\mathbf{L}(0)^\dag:{\rm dom}(\mathbf{L}^\dag) \subset Y\to Y$, and a straightforward calculation reveals that the corresponding eigenfunctions are 
\begin{equation}\label{def:psi12}
\boldsymbol{\psi}_j(0)=\begin{pmatrix}
{\displaystyle \frac{ik_j(0)p_j}{\mu_0^2(1-k_j(0)^2)}
\Big(\cosh(k_j(0))\frac{\cosh(y)}{\sinh(1)}-\mu_0\cosh(k_j(0) y)\Big)}\\
p_j\cosh(k_j(0) y)\\{\displaystyle -\frac{p_j}{\mu_0}\cosh(k_j(0))}\end{pmatrix},
\end{equation}
where
\begin{equation}\label{def:cj12}
p_j=-\frac{i\cosh(k_j(0))}{{\cosh(k_j(0))}^2\sinh(k_j(0))-\mu _0\sinh(k_j(0))},
\end{equation}
so that $\langle \boldsymbol{\phi}_{j}(0),\boldsymbol{\psi}_{j'}(0)\rangle=\delta_{jj'}$, $j,j'=1,2$, where $\boldsymbol{\phi}_j(0)$, $j=1,2$, are in \eqref{def:phi12}. 

Also, when $\sigma=0$, $-ik_j(0)=0$, $j=3,4$, is an eigenvalue of $\mathbf{L}(0)^\dag$ with algebraic multiplicity $2$ and geometric multiplicity $1$, by Lemma~\ref{lem:eps=0}, and the corresponding eigenfunctions are
\begin{equation}\label{def:psi34}
\boldsymbol{\psi}_3(0)=\begin{pmatrix}
0\\ {\displaystyle \frac{\mu_0}{\mu_0-1}}\\-{\displaystyle \frac{1}{\mu_0-1}}\end{pmatrix}
\quad\text{and}\quad
\boldsymbol{\psi}_4(0)=\begin{pmatrix}
{\displaystyle \frac{1}{(\mu_0-1)\mu_0}\Big(\mu_0-\frac{\cosh(y)}{\sinh(1)}\Big)}\\0\\0\end{pmatrix},
\end{equation}
whence $\langle \boldsymbol{\phi}_{j}(0),\boldsymbol{\psi}_{j'}(0)\rangle=\delta_{jj'}$, $j,j'=3,4$, where $\boldsymbol{\phi}_j(0)$, $j=3,4$, are in \eqref{def:phi34}. Notice that 
\[
\langle \boldsymbol{\phi}_{j}(0),\boldsymbol{\psi}_{j'}(0)\rangle=\delta_{jj'}, \quad j,j'=1,2,3,4.
\] 
Therefore
\be\label{def:Pi0} 
\boldsymbol{\Pi}(0)\mathbf{u}=\langle \mathbf{u},\boldsymbol{\psi}_1(0)\rangle\boldsymbol{\phi}_1(0)
+\langle \mathbf{u},\boldsymbol{\psi}_2(0)\rangle\boldsymbol{\phi}_2(0)
+\langle \mathbf{u},\boldsymbol{\psi}_3(0)\rangle\boldsymbol{\phi}_3(0)
+\langle \mathbf{u},\boldsymbol{\psi}_4(0)\rangle\boldsymbol{\phi}_4(0).
\ee 
Clearly $\boldsymbol{\Pi}(0)$ commutes with $\mathbf{L}(0)$. 

\begin{remark*}
The first entry of \eqref{def:psi12} appears to be not defined when $\kappa=1$. On the other hand, recall \eqref{def:mu0} and a straightforward calculation leads to that
\begin{align*}
\lim_{\kappa\to1}&\frac{\displaystyle \cosh(\kappa)\frac{\cosh(y)}{\sinh(1)}-\mu_0(\kappa)\cosh(\kappa y)}{1-\kappa}
=\lim_{\kappa\to1}\frac{\displaystyle \cosh(\kappa)\frac{\cosh(y)}{\sinh(1)}-\kappa\frac{\cosh(\kappa)}{\sinh(\kappa)}\cosh(\kappa y)}{1-\kappa}\\
=&\frac{\cosh(1)}{\sinh(1)^2}(\sinh(1)\cosh(y) - \cosh(1)\cosh(y) + y\sinh(1)\sinh(y))
\end{align*}
is well defined. Therefore we may define $\boldsymbol{\psi}_j(0)={\displaystyle \lim_{\kappa\to1}\boldsymbol{\psi}_j(0)}$, $j=1,2$, when $\kappa=1$, and verify that $\langle \boldsymbol{\phi}_{j}(0),\boldsymbol{\psi}_{j'}(0)\rangle=\delta_{jj'}$, $j,j'=1,2,3,4$.
\end{remark*}

When $\sigma>\sigma_c$, we infer from Lemma~\ref{lem:eps=0} that $-ik_j(\sigma)$, $j=2,4$, are simple eigenvalues of $\mathbf{L}(i\sigma)^\dag$, and a straightforward calculation reveals that the corresponding eigenfunctions are
\be \label{def:psi24}
\boldsymbol{\psi}_j(\sigma)=\begin{pmatrix} p_{1,j}\cosh(y)+p_{2,j} \cosh(k_j(\sigma)y)\\
{\displaystyle -\frac{i\mu_0 p_{2,j}(k_j(\sigma)^2 - 1)}{k_j(\sigma) + \sigma}\cosh(k_{j}(\sigma)y)}\\
{\displaystyle \frac{i p_{2,j}(k_j(\sigma)^2 - 1)}{k_j(\sigma) + \sigma}\cosh(k_{j}(\sigma))}\end{pmatrix},
\ee 
where 
\begin{equation}\label{def:cj24}
\begin{aligned}
&p_{1,j}=\frac{2\cosh(k_j(\sigma))(\sigma^2 - 1)(k_j(\sigma) - \sigma)^2}{\mu_0^2\sinh(1)(k_j(\sigma)^2 - 1)(k_j(\sigma)\sinh(2k_j(\sigma)) + \sigma\sinh(2k_j(\sigma)) + 2k_j(\sigma)\sigma - 2k_j(\sigma)^2)},\\
&p_{2,j}=\frac{2k_j(\sigma)^2 - 2\sigma^2}{\mu_0(k_j(\sigma)^2 - 1)(k_j(\sigma)\sinh(2k_j(\sigma)) + \sigma\sinh(2k_j(\sigma)) + 2k_j(\sigma)\sigma - 2k_j(\sigma)^2)},
\end{aligned}
\end{equation}
so that  $\langle \boldsymbol{\phi}_{j}(\sigma),\boldsymbol{\psi}_{j'}(\sigma)\rangle=\delta_{jj'}$, $j,j'=2,4$, where $\boldsymbol{\phi}_j(\sigma)$, $j=2,4$, are in \eqref{def:phi1-4}. Therefore
\begin{equation}\label{def:Pi;high}
\boldsymbol{\Pi}(\sigma)\mathbf{u}=\langle \mathbf{u},\boldsymbol{\psi}_2(\sigma)\rangle\boldsymbol{\phi}_2(\sigma)
+\langle \mathbf{u},\boldsymbol{\psi}_4(\sigma)\rangle\boldsymbol{\phi}_4(\sigma).
\end{equation}

When $0<\sigma\leq\sigma_c$, we proceed likewise to define $\boldsymbol{\Pi}(\sigma)$. We do not include the formulae here.

\section{The Benjamin--Feir instability}\label{sec:BF}

Recall the notation of the previous section. When $\sigma,\delta=0$ and $\eps=0$, \eqref{eqn:LB;u1} becomes ${\mathbf{v}}_x=\mathbf{L}(0)\mathbf{v}$, and Lemma~\ref{lem:eps=0} says that 
\begin{align}
&\mathbf{L}(0)\boldsymbol{\phi}_j(0)=ik_j(0)\boldsymbol{\phi}_j(0),
\quad k_j(0)=(-1)^j\kappa,\quad j=1,2,\label{eqn:L12}
\intertext{and}
&\mathbf{L}(0)\boldsymbol{\phi}_3(0)=\boldsymbol{\phi}_4(0),\qquad
\mathbf{L}(0)\boldsymbol{\phi}_4(0)=0, \label{eqn:L34}
\end{align}
where $\boldsymbol{\phi}_j(0)$, $j=1,2,3,4$, are in \eqref{def:phi12} and \eqref{def:phi34}. 
Therefore 
the monodromy matrix (see Definition~\ref{def:Evans}) becomes
\[
\mathbf{X}(T;0,0,0)
=\begin{pmatrix}e^{-i\kappa T}&0&0&0 \\ 0&e^{i\kappa T}&0&0 \\ 0&0&1&0 \\ 0&0&T&1\end{pmatrix}
=\begin{pmatrix}1&0&0&0\\0&1&0&0\\0&0&1&0\\0&0&T&1\end{pmatrix},
\]
where $T=2\pi/\kappa$ is the period of the wave. Correspondingly the periodic Evans function (see \eqref{def:Delta}) becomes 
\[
\Delta(0,K\kappa;0)=0\quad\text{for all $K\in\mathbb{Z}$}.
\] 
We shall examine the zeros of $\Delta(\lambda,k;\epsilon)$ for $(\lambda,k,\eps)$ in the vicinity of $(0,K\kappa,0)$, $K\in\mathbb{Z}$, whereby reproducing the celebrate Benjamin--Feir instability for a Stokes wave of sufficiently small amplitude \cite{BM;BF} (see also \cite{Benjamin;BF,Whitham;BF}). Since $\Delta(\lambda,k;\eps)$ depends analytically on $\lambda, k$ and $\eps$, let
\[
\text{$\lambda\in\mathbb{C}$ and $|\lambda|\ll1$},\quad 
\text{$k=K\kappa+\gamma$, $K\in\mathbb{Z}$, $\gamma\in\mathbb{R}$ and $|\gamma|\ll1$},\quad
\text{$\eps\in\mathbb{R}$ and $|\eps|\ll1$},
\] 
and let
\begin{equation}\label{eqn:evans exp}
\Delta(\lambda,K\kappa+\gamma;\eps)=\sum_{\ell,m,n=0}^\infty 
d^{(\ell,m,n)}\lambda^\ell\gamma^m\eps^n
\end{equation}
for $|\lambda|,|\gamma|,|\eps|\ll 1$. Our effort goes into determining $d^{(\ell,m,n)}$, $\ell,m,n=0,1,2,\dots$.

\subsection{Expansion of the monodromy matrix}\label{sec:a0}

Throughout the subsection, $\sigma=0$. Let 
\[
\begin{pmatrix}\mathbf{v}_1&\mathbf{v}_2&\mathbf{v}_3&\mathbf{v}_4\end{pmatrix}(x;\delta,\eps)
=\begin{pmatrix}\boldsymbol{\phi}_1(0)&\boldsymbol{\phi}_2(0)&\boldsymbol{\phi}_3(0)&\boldsymbol{\phi}_4(0)\end{pmatrix}\mathbf{X}(x;0,\delta,\eps)
\] 
denote a fundamental matrix of \eqref{eqn:A}, where $\boldsymbol{\phi}_j(0)$, $j=1,2,3,4$, are in \eqref{def:phi12} and \eqref{def:phi34}, and $\mathbf{X}(x;0,\delta,\eps)$ in Definition~\ref{def:Evans}. We write
\be \label{def:a;exp0}
\mathbf{v}_k(x;\delta,\eps)=\sum_{j=1}^4\Big(\sum_{m+n=0}^{\infty}a_{jk}^{(m,n)}(x)\delta^m\eps^n\Big)\boldsymbol{\phi}_j(0)
\ee 
for $|\delta|,|\eps|\ll 1$, where $a_{jk}^{(m,n)}(x)$, $j,k=1,2,3,4$ and $m,n=0,1,2,\dots$ are to be determined. We pause to remark that $\mathbf{X}(x;0,\delta,\eps)$ depends analytically on $\delta$ and $\eps$ for any $x\in[0,T]$, for $\delta\in\mathbb{C}$ and $|\delta|\ll1$ for $\eps\in\mathbb{R}$ and $|\eps|\ll1$, whence the series \eqref{def:a;exp0} converges for any $x\in[0,T]$. 
Let
\[
\mathbf{a}^{(m,n)}(x)=(a_{jk}^{(m,n)}(x))_{j,k=1,\dots,4},
\] 
and we may assume that
\be \label{cond:a}
\mathbf{a}^{(0,0)}(0)=\mathbf{I}\quad\text{and}\quad 
\mathbf{a}^{(m,n)}(0)=\mathbf{0}\quad \text{for $m+n\geq 1$}.
\ee
Our task is to evaluate $\mathbf{a}^{(m,n)}(T)$, $m,n=0,1,2,\dots$. We write \eqref{def:u2} as
\be\label{def:b;exp0}
\mathbf{w}(x,\mathbf{v}_k(x);0,\delta,\eps)=\sum_{m+n=1}^\infty \mathbf{w}_{k}^{(m,n)}(x;0)\delta^m\eps^n
\ee
for $|\delta|,|\eps|\ll 1$, where $\mathbf{w}_{k}^{(0,0)}(x;0)=\mathbf{0}$, $k=1,2,3,4$, and $\mathbf{w}_{k}^{(m,n)}(x;0)$, $k=1,2,3,4$ and $m+n\geq1$, are to be determined. Recall \eqref{eqn:LB;delta} and we write 
\be\label{def:B;exp0}
\mathbf{B}(x;0,\delta,\eps)=\sum_{m+n=1}^\infty\mathbf{B}^{(m,n)}(x;0)\delta^m\eps^n
\ee
for $|\delta|,|\eps|\ll 1$, where $\mathbf{B}^{(0,0)}(x;0)=\mathbf{0}$, and $\mathbf{B}^{(m,n)}(x;\sigma)$, $1\leq m+n\leq2$, are in Appendix~\ref{A:Bexp}. Notice that $\mathbf{B}^{(m,0)}(x;0)$, $m\geq1$, do not involve $x$. 

Inserting \eqref{def:a;exp0}, \eqref{def:b;exp0} and \eqref{def:B;exp0} into the former equation of \eqref{eqn:LB;u12} we recall Lemma~\ref{lem:eps=0} or, equivalently, \eqref{eqn:L12} and \eqref{eqn:L34}, and make a straightforward calculation to obtain
\be \label{eqn:a00}
\mathbf{a}^{(0,0)}(x)=\begin{pmatrix} 
e^{-i\kappa x}&0&0&0 \\ 0&e^{i\kappa x}&0&0\\0&0&1&0\\0&0&x&1\end{pmatrix},
\ee 
and for $m+n\geq1$ we arrive at
\be \label{eqn:a}
\sum_{j=1}^4\Big(\frac{d}{dx}a_{jk}^{(m,n)}\Big)\boldsymbol{\phi}_j(0)=
-i\kappa\,a_{1k}^{(m,n)}\boldsymbol{\phi}_1(0)
+i\kappa\,a_{2k}^{(m,n)}\boldsymbol{\phi}_2(0)
+a_{3k}^{(m,n)}\boldsymbol{\phi}_4(0)+\boldsymbol{\Pi}(0)\mathbf{f}_{k}^{(m,n)}(x;0),
\ee  
where $\boldsymbol{\Pi}(0)$ is in \eqref{def:Pi0} and
\be \label{def:f0}
\mathbf{f}_{k}^{(m,n)}(x;0)=\sum_{\substack{0\leq m'\leq m\\ 0\leq n' \leq n}}
\mathbf{B}^{(m',n')}(x;0)\big(\mathbf{w}_{k}^{(m-m',n-n')}(x;0)
+\sum_{j=1}^4a_{jk}^{(m-m',n-n')}\boldsymbol{\phi}_j(0)\big).
\ee 

Inserting \eqref{def:a;exp0}, \eqref{def:b;exp0} and \eqref{def:B;exp0} into the latter equation of \eqref{eqn:LB;u12}, at the order of $\delta^m\eps^n$, $m+n\geq 1$, let $\mathbf{w}_{k}^{(m,n)}(x;0)=\begin{pmatrix}\phi\\ \upsilon \\ \eta\end{pmatrix}$, by abuse of notation, and we arrive at
\ba \label{eqn:red0}
&\phi_{xx}+\phi_{yy}={((\mathbf{1}-\boldsymbol{\Pi}(0))\mathbf{f}_{k}^{(m,n)}(x;0))_1}_x+\mu_0((\mathbf{1}-\boldsymbol{\Pi}(0))\mathbf{f}_{k}^{(m,n)}(x;0))_2&&\text{for $0<y<1$},\\
&\upsilon=\mu_0^{-1}\phi_x-\mu_0^{-1}((\mathbf{1}-\boldsymbol{\Pi}(0))\mathbf{f}_{k}^{(m,n)}(x;0))_1&&\text{for $0<y<1$},\\
&\eta_x=-\phi_y+((\mathbf{1}-\boldsymbol{\Pi}(0))\mathbf{f}_{k}^{(m,n)}(x;0))_3&&\text{at $y=1$},\\
&\eta=\upsilon&&\text{at $y=1$}, \\
&\phi_y=0&&\text{at $y=0$}.
\ea
Notice that since $\mathbf{B}^{(0,0)}(x;0)=\mathbf{0}$, the right side of \eqref{def:f0} does not involve $\mathbf{w}_{k}^{(m,n)}(x;0)$, and it is made up of lower order terms. Also notice that the fourth and fifth equations of \eqref{eqn:red0} ensure that $\mathbf{w}_{k}^{(m,n)}(x;0)\in {\rm dom}(\mathbf{L})$ (see \eqref{def:dom}). Recall \eqref{def:phi12}, \eqref{def:phi34} and \eqref{def:psi12}, \eqref{def:cj12}, \eqref{def:psi34}. We use the result of Appendix~\ref{A:Bexp} and solve the first and the last equations of \eqref{eqn:red0}, for instance, by the method of undetermined coefficients, subject to that $\mathbf{w}_{k}^{(m,n)}(x;0)\in(\mathbf{1}-\boldsymbol{\Pi}(0))Y$, so that $\boldsymbol{\Pi}(0)\mathbf{w}_{k}^{(m,n)}(x;0)=\mathbf{0}$, to determine $\phi$, and we determine $u$ and $\eta$ by the second and fourth equations of \eqref{eqn:red0}. The result is in Appendix~\ref{A:reduction0}. 


\begin{lemma}\label{lem:a4=0}
We have $a_{j4}^{(0,n)}(x)=0$ for $j=1,2,3,4$ for all $n\geq1$.
\end{lemma}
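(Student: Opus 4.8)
The plan is to show that the vector $\boldsymbol{\phi}_4(0)=(\mu_0,0,0)^\top$ of \eqref{def:phi34} is, all by itself, an exact and $\eps$-independent bounded solution of the full linearized system \eqref{eqn:LB} at $\lambda=0$, and then to transfer this fact to the reduced equation \eqref{eqn:A}. Indeed, by \eqref{cond:a} and \eqref{eqn:a00} the fourth column $\mathbf{v}_4(x;0,\eps)$ is the solution of \eqref{eqn:A} at $\sigma=\delta=0$ issuing from $\boldsymbol{\phi}_4(0)$, so the assertion $a_{j4}^{(0,n)}(x)=0$ for all $n\ge 1$ is equivalent to $\mathbf{v}_4(x;0,\eps)\equiv\boldsymbol{\phi}_4(0)$. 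Since $\mathbf{L}(0)\boldsymbol{\phi}_4(0)=\mathbf{0}$ is already known from \eqref{eqn:L34}, the whole proof rests on the single identity
\[
\mathbf{B}(x;0,\eps)\boldsymbol{\phi}_4(0)=\mathbf{0}\quad\text{for all }\eps,\qquad\text{equivalently}\qquad \mathbf{B}^{(0,n)}(x;0)\boldsymbol{\phi}_4(0)=\mathbf{0}\ \ (n\ge 1),
\]
the second form following from the $\eps$-expansion \eqref{def:B;exp0} evaluated at $\delta=0$.

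First I would establish this identity. The key structural observation is that at $\lambda=0$ the spectral system \eqref{eqn:spec}, \emph{after} the eliminations of $u$, $u_x$, $u_y$ carried out in Section~\ref{sec:linearize}, depends on $\phi$ only through its derivatives: every occurrence of the undifferentiated $\phi$ enters multiplied by $\lambda$, namely through the terms $\lambda\phi$ in \eqref{def:tildeu}, \eqref{def:u(tildeu)}, \eqref{def:u_x} and the $\lambda\phi$, $\lambda^2\phi$ terms in \eqref{def:L}. This is just the infinitesimal form of the invariance $\phi\mapsto\phi+\text{const}$ of \eqref{eqn:ww}, already noted for the profile equations \eqref{eqn:stokes}. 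Since $\boldsymbol{\phi}_4(0)$ has $\phi\equiv\mu_0$ constant and $\upsilon=\eta=0$, all derivatives of $\phi$ and all of $\upsilon,\eta$ vanish, so at $\lambda=0$ the entire right-hand side of \eqref{eqn:LB} evaluates to zero for every $\eps$; as $\mathbf{L}(0)\boldsymbol{\phi}_4(0)=\mathbf{0}$, this yields the displayed identity.

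With the identity in hand I would finish by induction on $n$, using the recursion \eqref{eqn:a}, \eqref{def:f0} with $m=0$, $k=4$, together with \eqref{eqn:red0} for the complementary part. Using $\mathbf{B}^{(0,0)}=\mathbf{0}$, $\mathbf{w}_4^{(0,0)}=\mathbf{0}$ and $a_{j4}^{(0,0)}=\delta_{j4}$ (from \eqref{eqn:a00}), the forcing $\mathbf{f}_4^{(0,n)}(x;0)$ in \eqref{def:f0} reduces, once the lower-order contributions $\mathbf{w}_4^{(0,n-n')}$ and $a_{j4}^{(0,n-n')}$ with $1\le n-n'\le n-1$ are discarded by the inductive hypothesis, to the single surviving term $\mathbf{B}^{(0,n)}(x;0)\boldsymbol{\phi}_4(0)$, which is $\mathbf{0}$ by the identity. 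Hence \eqref{eqn:a} becomes a homogeneous linear system for $(a_{j4}^{(0,n)})_j$ with zero initial data \eqref{cond:a}, forcing $a_{j4}^{(0,n)}\equiv 0$, and \eqref{eqn:red0} then gives $\mathbf{w}_4^{(0,n)}\equiv\mathbf{0}$, closing the induction. (Equivalently, since $\boldsymbol{\phi}_4(0)\in Y(0)$ has vanishing $(\mathbf{1}-\boldsymbol{\Pi}(0))$-component, it descends under the center-manifold reduction to the constant solution $\mathbf{v}\equiv\boldsymbol{\phi}_4(0)$ of \eqref{eqn:LB;u1}, i.e. $\mathbf{a}\equiv(0,0,0,1)^\top$ of \eqref{eqn:A}; uniqueness of the linear ODE identifies this with the fourth column of $\mathbf{X}(x;0,0,\eps)$.)

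The main obstacle is the structural identity $\mathbf{B}(x;0,\eps)\boldsymbol{\phi}_4(0)=\mathbf{0}$: one must verify carefully that after the substitutions \eqref{def:u(tildeu)}, \eqref{def:u(tildeu)_y}, \eqref{def:u_x} the only terms containing an undifferentiated $\phi$ indeed carry a factor of $\lambda$ and therefore drop at $\lambda=0$. Once this is confirmed, the remainder is a routine induction (or the one-line uniqueness argument), and no explicit knowledge of $\mathbf{B}^{(0,n)}$ beyond its action on $\boldsymbol{\phi}_4(0)$ is required.
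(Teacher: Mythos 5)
Your proposal is correct and follows essentially the same route as the paper: both rest on verifying, via the change of unknowns \eqref{def:tildeu}--\eqref{def:u_x}, that $\mathbf{B}(x;0,0,\eps)\boldsymbol{\phi}_4(0)=\mathbf{0}$ (equivalently $\mathbf{B}^{(0,n)}(x;0)\boldsymbol{\phi}_4(0)=\mathbf{0}$ for $n\geq1$), and then closing an induction on $n$ through \eqref{def:f0}, \eqref{eqn:a}, \eqref{cond:a} and \eqref{eqn:red0} to get $a_{j4}^{(0,n)}\equiv0$ and $\mathbf{w}_4^{(0,n)}\equiv\mathbf{0}$. Your explicit explanation of the key identity as the infinitesimal form of the invariance $\phi\mapsto\phi+\mathrm{const}$ is a nice elaboration of what the paper leaves as a terse verification, but it is the same argument.
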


\begin{proof}
Recall \eqref{eqn:spec} and \eqref{def:tildeu}, and $\mathbf{B}(x;\sigma,\delta,\eps)\boldsymbol{\phi}_4(0)=\mathbf{0}$ for $\sigma=\delta=0$ (see \eqref{eqn:LB;delta}), whence $\mathbf{B}^{(0,n)}(x;0)\boldsymbol{\phi}_4(0)=\boldsymbol{0}$ for all $n\geq1$, by \eqref{def:B;exp0}. When $n=1$, \eqref{def:f0} leads to
\[
\mathbf{f}_{4}^{(0,1)}(x;0)=\mathbf{B}^{(0,1)}(x;0)\boldsymbol{\phi}_4(0)=\mathbf{0},
\]
by \eqref{def:b;exp0} and \eqref{eqn:a00}, and we solve \eqref{eqn:a} and \eqref{cond:a} to obtain $a_{j4}^{(0,1)}(x)=0$, $1\leq j\leq4$. We in turn solve \eqref{eqn:red0} to obtain $\mathbf{w}_{4}^{(0,1)}(x;0)=\mathbf{0}$. The assertion then follows by the induction on $n$ because $\mathbf{f}_{4}^{(0,n)}(x;0)=\mathbf{0}$ for all $n\geq1$. 
\end{proof}

\begin{corollary}\label{translation-invariance}
For $\eps\in\mathbb{R}$ and $|\eps|\ll1$, $\Delta(0,K\kappa;\eps)=0$, $K\in\mathbb{Z}$.
\end{corollary}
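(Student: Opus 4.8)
The plan is to show that $\lambda = 0$ is a root of the periodic Evans function at Floquet exponent $k = K\kappa$ for every integer $K$, uniformly in small $\eps$. Recall from Definition~\ref{def:Evans} that $\Delta(0,K\kappa;\eps) = \det(e^{iK\kappa T}\mathbf{I} - \mathbf{X}(T;0,0,\eps))$, and that $e^{iK\kappa T} = e^{2\pi i K} = 1$ since $T = 2\pi/\kappa$. Thus it suffices to prove that $\mathbf{I} - \mathbf{X}(T;0,0,\eps)$ is singular, i.e.\ that the monodromy matrix $\mathbf{X}(T;0,0,\eps)$ has $1$ as an eigenvalue, for all sufficiently small $\eps$. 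The natural way to exhibit this is to produce an explicit eigenvector.

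First I would set $\sigma = \delta = 0$ and examine the fourth column of $\mathbf{X}(x;0,0,\eps)$, corresponding to the initial condition $\boldsymbol{\phi}_4(0)$. By Lemma~\ref{lem:a4=0} we have $a_{j4}^{(0,n)}(x) = 0$ for all $j = 1,2,3,4$ and all $n \geq 1$, so that the coefficient functions $a_{j4}(x;0,\eps) = \sum_{n\geq 0} a_{j4}^{(0,n)}(x)\eps^n$ reduce to their $\eps^0$ terms. From \eqref{eqn:a00} the zeroth-order coefficients give $a_{14}^{(0,0)} = a_{24}^{(0,0)} = a_{34}^{(0,0)} = 0$ and $a_{44}^{(0,0)} = 1$, independently of $x$. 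Hence for all small $\eps$ the fourth column of the fundamental matrix $\mathbf{X}(x;0,0,\eps)$ is identically $(0,0,0,1)^{\mathsf T}$ for every $x \in [0,T]$; in particular the fourth column of the monodromy matrix $\mathbf{X}(T;0,0,\eps)$ equals $(0,0,0,1)^{\mathsf T}$.

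This means the vector $\mathbf{e}_4 = (0,0,0,1)^{\mathsf T}$ satisfies $\mathbf{X}(T;0,0,\eps)\mathbf{e}_4 = \mathbf{e}_4$, so $1$ is an eigenvalue of the monodromy matrix with eigenvector $\mathbf{e}_4$. Consequently $\mathbf{I} - \mathbf{X}(T;0,0,\eps)$ has nontrivial kernel, its determinant vanishes, and therefore $\Delta(0,K\kappa;\eps) = \det(\mathbf{I} - \mathbf{X}(T;0,0,\eps)) = 0$ for every $K \in \mathbb{Z}$ and all $\eps \in \mathbb{R}$ with $|\eps| \ll 1$. Conceptually, this is the persistence of the translational zero mode: $\boldsymbol{\phi}_4(0)$ spans the direction generated by spatial translation of the Stokes wave, and Lemma~\ref{lem:a4=0} records that the change of variables \eqref{def:tildeu} is engineered so that the perturbation $\mathbf{B}$ annihilates this mode at $\sigma = \delta = 0$, keeping it an exact eigendirection of the monodromy map for all amplitudes.

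The only real content lies in Lemma~\ref{lem:a4=0}, which is already established; given it, the corollary is essentially a one-line reading of the block structure of $\mathbf{X}(T;0,0,\eps)$. The single point demanding care is the bookkeeping of the Floquet/determinant identity: one must confirm that at $\sigma = \delta = 0$ the relevant matrix is genuinely $\mathbf{I} - \mathbf{X}(T;0,0,\eps)$ (rather than carrying a nontrivial phase factor), which holds precisely because $e^{iK\kappa T} = 1$. No further estimates or analyticity arguments are needed, since the vanishing is exact rather than asymptotic.
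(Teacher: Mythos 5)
Your proof is correct and follows essentially the same route as the paper: both invoke Lemma~\ref{lem:a4=0} together with the zeroth-order formula \eqref{eqn:a00} (and the normalization \eqref{cond:a}) to conclude that the fourth column of $\mathbf{X}(T;0,0,\eps)$ is exactly $(0,0,0,1)^{\mathsf T}$, so the fourth column of $\mathbf{I}-\mathbf{X}(T;0,0,\eps)$ vanishes and the determinant is zero. Your phrasing in terms of the fixed eigenvector $\mathbf{e}_4$ is just a restatement of the paper's column-vanishing observation.
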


\begin{proof}
Notice that $\Delta(0,K\kappa;\eps)=\det(\mathbf{I}-\mathbf{X}(T;0,0,\eps))$, and Lemma~\ref{lem:a4=0} and \eqref{cond:a} assert that the fourth column of $\mathbf{I}-\mathbf{X}(T;0,0,\eps)$ vanishes.
\end{proof}

For $m+n\geq1$ we rearrange \eqref{eqn:a} as
\be\label{firstorderii}
\frac{d}{dx}\begin{pmatrix}a_{1k}^{(m,n)}\\a_{2k}^{(m,n)}\\a_{3k}^{(m,n)}\\a_{4k}^{(m,n)}\end{pmatrix}
=\mathbf{A}(0)\begin{pmatrix}a_{1k}^{(m,n)}\\a_{2k}^{(m,n)}\\a_{3k}^{(m,n)}\\a_{4k}^{(m,n)}\end{pmatrix}+\mathbf{F}_k^{(m,n)}(x;0),\quad \mathbf{F}_k^{(m,n)}(x;0):=\begin{pmatrix}\langle\mathbf{f}_k^{(m,n)}(x;0),\boldsymbol{\psi}_1(0)\rangle\\\langle\mathbf{f}_k^{(m,n)}(x;0),\boldsymbol{\psi}_2(0)\rangle\\\langle\mathbf{f}_k^{(m,n)}(x;0),\boldsymbol{\psi}_3(0)\rangle\\\langle\mathbf{f}_k^{(m,n)}(x;0),\boldsymbol{\psi}_4(0)\rangle\end{pmatrix},
\ee  
where $\mathbf{A}(0)$ collects the coefficients of $a_{jk}^{(m,n)}$ on the right side, $\boldsymbol{\psi}_j(0)$, $j=1,2,3,4$, are in \eqref{def:psi12}, \eqref{def:cj12} and \eqref{def:psi34}, and $\langle ,\rangle$ in \eqref{def:inner}. Recall from Section~\ref{sec:proj} that $\langle \boldsymbol{\phi}_j(0),\boldsymbol{\psi}_{j'}(0)\rangle=\delta_{jj'}$, $j=1,2,3,4$. We write the solution of \eqref{firstorderii} and \eqref{cond:a} symbolically as
\[
\mathbf{a}_{k}^{(m,n)}(x)=e^{\mathbf{A}(0)x}\int_0^x e^{-\mathbf{A}(0)x'}\mathbf{F}_k^{(m,n)}(x';0)~dx'.
\] 
Recalling the result of Appendix~\ref{A:Bexp} and Appendix~\ref{A:reduction0} we make a straightforward calculation to show that: 
\begin{equation}\label{eqn:a10}
\mathbf{a}^{(1,0)}(T)=\begin{pmatrix} \dfrac{4\pi c^3}{\mu_0s(s^2 - \mu_0 + 1)}
 & 0 & 0 & 0 \\ 0 & \dfrac{4\pi c^3}{\mu_0s(s^2 - \mu_0 + 1)} & 0 & 0 \\ 0 & 0 & \dfrac{2\pi c(\mu_0 + 1)}{\mu_0s(1 - \mu_0)} & 0\\ \dfrac{4\pi(s^2 + 1)}{\mu_0(\mu_0 - 1)s}&\dfrac{4\pi(s^2 + 1)}{\mu_0(\mu_0 - 1)s}&\dfrac{4\pi^2(s^2 + 1)}{\mu_0^2s^2(1-\mu_0)}&\dfrac{2\pi}{\kappa} \end{pmatrix},
\end{equation}
\begin{equation}\label{eqn:a01}
\mathbf{a}^{(0,1)}(T)=\begin{pmatrix} 0 & 0 & \dfrac{\pi(2s^2 + 3)}{s^2 - \mu_0 + 1}&0\\ 0 & 0 & \dfrac{\pi(2s^2 + 3)}{s^2 - \mu_0 + 1}&0\\0&0&0&0\\\dfrac{4\pi e^{2\kappa}(\mu_0 - 4c^2)(c^2 - 1)i}{(e^{4\kappa} - 1)(\mu_0 - 1)}&-\dfrac{4\pi e^{2\kappa}(\mu_0 - 4c^2)(c^2 - 1)i}{(e^{4\kappa} - 1)(\mu_0 - 1)}&\dfrac{2\pi c(\mu_0^2 + \mu_0 + 1)}{\mu_0(\mu_0 - 1)}&0\end{pmatrix},
\end{equation}
and
\begin{equation}\label{eqn:a20,11}
\mathbf{a}^{(2,0)}(T)=\begin{pmatrix} a_{11}^{(2,0)}& 0 & * &0  \\ 0 & (a_{11}^{(2,0)})^*& * &0 \\ 
a_{31}^{(2,0)}& a_{31}^{(2,0)} & * & a_{34}^{(2,0)} \\ * & * & * & a_{44}^{(2,0)} \end{pmatrix},
\qquad
\mathbf{a}^{(1,1)}(T)=\begin{pmatrix} a_{11}^{(1,1)} & a_{11}^{(1,1)} & * & a_{14}^{(1,1)}\\ 
a_{11}^{(1,1)} & a_{11}^{(1,1)} & * & a_{14}^{(1,1)}\\
a_{31}^{(1,1)} & * & * & 0 \\ * & * & * & * \end{pmatrix},
\end{equation}
where
\begin{align}
&\begin{aligned}\label{def:a20}
a_{11}^{(2,0)}
=&\frac{8\pi^2c^6}{\mu_0^2(\mu_0 - c^2)^2(c^2 - 1)}+\frac{2\pi c^4(c^4 + 4\mu_0^2c^2 - 3\mu_0^2 - 2\mu_0 c^2)i}{\mu_0^2(\mu_0 - c^2)^3(c^2 - 1)},\\
a_{31}^{(2,0)}=&\frac{4\pi(s^2 + 1)(s^2 - 3\mu_0s^2 - 2\mu_0 + \mu_0^2 + 1)}{\mu_0s(\mu_0 - 1)^2(s^2 - \mu_0 + 1)}
,\\
a_{34}^{(2,0)}=&\frac{2\pi c(c + s)}{\mu_0 + cs + c^2 - \mu_0c^2 - \mu_0cs - 1},\qquad 
a_{44}^{(2,0)}=-\frac{2\pi^2(s^2 + 1)}{\mu_0^2s^2(\mu_0 - 1)}
\end{aligned}
\intertext{and}
&\begin{aligned}\label{def:a11}
a_{11}^{(1,1)}=&-\frac{2\pi(c + 2c^3)}{(\mu_0 - c^2)(\mu_0 - 1)},\qquad
a_{14}^{(1,1)}=\frac{2\pi(s^2 + 1)}{s^2 - \mu_0 + 1},\\
a_{31}^{(1,1)}=&-\frac{2\pi i(4c^4 + 4sc^3 - 5c^2 - 3sc + 1)(- 2c^4\mu_0^2 - 4c^4\mu_0 + 2c^4 + 3c^2\mu_0^2 + 2c^2\mu_0 - \mu_0^3)}{c(\mu_0 - 1)^2(- c^2 + \mu_0)(- 4c^3 - 4sc^2 + 3c + s)}.
\end{aligned}
\end{align}
Here and elsewhere we employ the notation
\[
s=\sinh(\kappa)\quad\text{and}\quad c=\cosh(\kappa)
\] 
wherever it is convenient to do so. 
Our proof does not involve other entries of $\mathbf{a}^{(2,0)}(T)$ and $\mathbf{a}^{(1,1)}(T)$, whence we do not include the formulae here. We emphasize that $a_{22}^{(2,0)}$ in \eqref{eqn:a20,11} is the complex conjugate of $a_{11}^{(2,0)}$. Additionally we calculate that
\begin{equation}\label{def:a02}
\begin{aligned}
a_{11}^{(0,2)}(T)=-i\mu_0 \pi (&24 s^2 - 21 \mu_0 s^2 - 20 \mu_0 s^4 - 8 \mu_0 s^6 - 9 \mu_0 + 40 s^4 \\&+ 16 s^6 + 15 \mu_0^2 s^2 + 16 \mu_0^2 s^4 + 8 \mu_0^2 s^6 + 9 \mu_0^2)(4 (s^2 + 1) (\mu_0 - 1) (s^2 - \mu_0 + 1))^{-1}
\end{aligned}
\end{equation}
and 
\begin{equation}\label{def:a02;0}
a_{j4}^{(0,2)}(T)=0, \quad j=1,2,3,4.
\end{equation} 
We remark that $a_{jk}^{(m,n)}(T)$, $1\leq m+n\leq 2$, suffices for a proof of the Benjamin--Feir instability.

\subsection{The modulational instability index}\label{sec:ind1}

Let
\[
\text{$\lambda\in\mathbb{C}$ and $|\lambda|\ll1$},\quad 
\text{$k=K\kappa+\gamma$, $K\in\mathbb{Z}$, $\gamma\in\mathbb{R}$ and $|\gamma|\ll1$},\quad
\text{$\eps\in\mathbb{R}$ and $|\eps|\ll1$},
\]
and we turn the attention to \eqref{eqn:evans exp}. Putting together  \eqref{eqn:a00}, \eqref{eqn:a10},  \eqref{eqn:a01},  \eqref{eqn:a20,11}, \eqref{def:a20}, \eqref{def:a11}, \eqref{def:a02}, \eqref{def:a02;0}, we arrive at 
\ba \label{eqn:evans0}
\Delta(\lambda,K\kappa+\gamma;\eps)=&d^{(4,0,0)}\lambda^4+d^{(3,1,0)}\lambda^3 \gamma+d^{(2,2,0)}\lambda^2\gamma^2+d^{(1,3,0)}\lambda \gamma^3+d^{(0,4,0)}\gamma^4\\
&+d^{(3,0,2)}\lambda^3\eps^2+d^{(2,0,3)}\lambda^2\eps^3+d^{(0,3,2)}\gamma^3\eps^2+d^{(0,2,3)}\gamma^2\eps^3\\
&+d^{(2,1,2)}\lambda^2\gamma\eps^2+d^{(1,2,2)}\lambda\gamma^2\eps^2+d^{(1,1,3)}\lambda\gamma \eps^3
+d^{(1,0,5)}\lambda\eps^5+d^{(0,1,5)}\gamma\eps^5\\
&\begin{aligned}
+o(&(|\lambda|+|\gamma|)^4+|\lambda|^3|\eps|^2+|\lambda|^2|\eps|^3+|\gamma|^3|\eps|^2+|\gamma|^2|\eps|^3\\
&+|\lambda\gamma| |\eps|^2(|\lambda|+|\gamma|+|\eps|)+|\lambda| |\eps|^5+|\gamma| |\eps|^5)\end{aligned}
\ea
as $\lambda,\gamma,\eps\to 0$, where $d^{(\ell,m,n)}$, $\ell,m,n=0,1,2,\dots$ can be determined in terms of $a_{jk}^{(m,n)}(T)$, $j,k=1,2,3,4$ and $m,n=0,1,2,\dots$, and $T=2\pi/\kappa$ is the period of the wave. We may regard $a_{jk}^{(m,n)}(T)$ and, hence, $d^{(\ell,m,n)}$ as functions of $T$ or, equivalently, $\kappa$. Below we suppress $T$ for the simplicity of notation. We remark that $d^{(\ell,m,n)}=0$ when $\lambda^\ell\gamma^m\eps^n$ is of lower order than
\[
(\lambda+\gamma)^4, \lambda^3\eps^2, \lambda^2\eps^3, \gamma^3\eps^2, \gamma^2\eps^3, \lambda\gamma\eps^2(\lambda+\gamma+\eps), \lambda \eps^5, \gamma \eps^5.
\]

The Weierstrass preparation theorem asserts that \eqref{eqn:evans0} becomes 
\[
\Delta(\lambda,K\kappa+\gamma;\eps)=W(\lambda,\gamma,\eps)h(\lambda,\gamma,\eps)
\]
for $|\lambda|,|\gamma|,|\eps|\ll 1$, where 
\begin{equation}\label{def:W(0,0,0)}
W(\lambda,\gamma,\eps)=\lambda^4+g_3(\gamma,\eps)\lambda^3+g_2(\gamma,\eps)\lambda^2+g_1(\gamma,\eps)\lambda+g_0(\gamma,\eps)
\end{equation}
is a Weierstrass polynomial, $g_j(\gamma,\eps)$, $j=0,1,2,3$, are analytic and $g_j(0,0)=0$, and $h(\lambda,\gamma,\eps)$ is analytic at $(0,0,0)$ and $h(0,0,0)=d^{(4,0,0)}\neq0$. Therefore the zeros of $\Delta(\lambda,K\kappa+\gamma;\eps)$ for $(\lambda,\gamma,\eps)$ in the vicinity of $(0,0,0)$ are the four zeros of $W(\lambda,\gamma,\eps)$. One may examine the leading terms of the zeros of \eqref{def:W(0,0,0)} as $\gamma,\eps\to 0$, to determine the asymptotics of $\lambda(\gamma,\eps)$ for $\lambda$ in the vicinity of the origin of the complex plane. 
Recently, Berti, Maspero, and Ventura \cite{BMV;finite} (see also \cite{BMV} in the infinite depth) used Kato's similarity transformation and a KAM theory approach, to analytically confirm a ``figure-8'' loop, as numerically predicted in \cite{DO} and others. In Section~\ref{sec:high-freq}, we successfully solve a quadratic Weierstrass polynomial (see \eqref{weierstrass_high}), to analytically confirm ``bubbles'' of unstable spectrum away from $0\in\mathbb{C}$. See \eqref{delta_high}.

Our goal here is to determine spectral stability and instability near the origin of the complex plane, rather than the asymptotics of the spectrum. Let 
\begin{equation}\label{def:lambda(gamma,eps)}
\lambda_j(k_j(0)+\gamma,\eps)=
\alpha^{(1,0)}_j\gamma+\alpha^{(2,0)}_j\gamma^2+\alpha^{(1,1)}_j\gamma\eps
+o(|\gamma|^2+ |\gamma||\eps|),\quad j=1,2,3,4,
\end{equation}
as $\gamma,\eps\to 0$, where $\alpha^{(1,0)}_j$, $\alpha^{(2,0)}_j$, $\alpha^{(1,1)}_j$, $j=1,2,3,4$, are to be determined in terms of $d^{(\ell,m,n)}$ and, hence, $a_{jk}^{(m,n)}$, $j,k=1,2,3,4$ and $m,n=0,1,2,\dots$. We pause to remark about the form of \eqref{def:lambda(gamma,eps)}. When $\eps=0$ we deduce from Section~\ref{sec:eps=0} that $\Delta(i\sigma(k),k;0)=0$ for any $k\in\mathbb{R}$, where $\sigma$ is in \eqref{eqn:sigma}. Particularly $\Delta(\lambda_j(k_j(0),0),k_j(0);0)=0$, $j=1,2,3,4$, where 
\[
k_j(0)=(-1)^j\kappa\quad\text{for $j=1,2$},\quad k_j(0)=0\quad \text{for $j=3,4$}.
\]
In other words, $\lambda=0$ and $k=k_j(0)$, $j=1,2,3,4$, are the four zeros of $\Delta(\cdot,\cdot;0)$. Also $\sigma(k_j(0)+\gamma)$ 
admits power series expansions about $\gamma=0$, and they must agree with 
\[
\lambda_j(k_j(0)+\gamma,0)=\alpha^{(1,0)}_j\gamma+\alpha^{(2,0)}_j\gamma^2+\cdots
\]
as $\gamma\to0$. Moreover, \cite[Theorem~1.1]{BMV;finite} justifies \eqref{def:lambda(gamma,eps)} for $|\gamma|\ll|\eps|\ll 1$. 

Substituting \eqref{def:lambda(gamma,eps)} into \eqref{eqn:evans0}, 
after a straightforward calculation, we learn that $\gamma^4$ is the leading order whose coefficient reads
\begin{equation}\label{eqn:alpha10}
\begin{aligned}
&d^{(4,0,0)}(\alpha^{(1,0)}_j)^4+d^{(3,1,0)}(\alpha^{(1,0)}_j)^3+d^{(2,2,0)}(\alpha^{(1,0)}_j)^2+d^{(1,3,0)}\alpha^{(1,0)}_j+d^{(0,4,0)}\\
&=-\big(-a_{11}^{(1,0)}\alpha^{(1,0)}_j+iT\big)^2
\big((Ta_{34}^{(2,0)}-a_{33}^{(1,0)}a_{44}^{(1,0)})(\alpha^{(1,0)}_j)^2+iT(a_{33}^{1,0}+a_{44}^{(1,0)})\alpha^{(1,0)}_j+T^2\big),
\end{aligned}
\end{equation}
where $a_{11}^{(1,0)}$, $a_{33}^{(1,0)}$, $a_{44}^{(1,0)}$, $a_{34}^{(2,0)}$ are in \eqref{eqn:a10} and \eqref{def:a20}. At the order of $\gamma^4$, \eqref{eqn:alpha10} must vanish, whence
\ba\label{def:alpha10'}
\alpha^{(1,0)}_j=&\frac{iT}{a_{11}^{(1,0)}}\quad \text{or}\\
\alpha^{(1,0)}_j=&\frac{-iT(a_{33}^{(1,0)} + a_{44}^{(1,0)}) \pm T\sqrt{- {a_{33}^{(1,0)}}^2 + 2a_{33}^{(1,0)}a_{44}^{(1,0)} - ({a_{44}^{(1,0)}})^2 - 4Ta_{34}^{(2,0)}}}{2(Ta_{34}^{(2,0)} - a_{33}^{(1,0)}a_{44}^{(1,0)})}.
\ea
Notice that $\alpha^{(1,0)}_j$, $j=1,2,3,4$, are purely imaginary by \eqref{eqn:a10} and \eqref{def:a20}. On the other hand, \eqref{def:alpha10'} must agree with power series expansions of \eqref{def:sigma} about $k_j(0)$, $j=1,2,3,4$. Therefore
\begin{equation}\label{def:alpha10}
\alpha^{(1,0)}_j=\frac{iT}{a_{11}^{(1,0)}} \quad\text{for}\quad j=1,2,
\end{equation}
and the latter equation of \eqref{def:alpha10'} holds true for $j=3,4$.

Substituting \eqref{def:lambda(gamma,eps)} and \eqref{def:alpha10} into \eqref{eqn:evans0}, after a straightforward calculation, we verify that the $\gamma^5$ term vanishes, and solving at the order of $\gamma^6$ we arrive at 
\be\label{def:alpha20}
\alpha^{(2,0)}_j=\pm\frac{T^2(-(a_{11}^{(1,0)})^2+2a_{11}^{(2,0)})}{2(a_{11}^{(1,0)})^3},\quad j=1,2,
\ee
where $a_{11}^{(1,0)}$ and $a_{11}^{(2,0)}$ are in \eqref{eqn:a10} and \eqref{def:a20}. We remark that $\alpha^{(2,0)}_j$, $j=1,2$, are purely imaginary because $(a_{11}^{(1,0)})^2$ offsets the real part of $2a_{11}^{(2,0)}$. The $\pm$ signs explain the oppositeness of the convexity of the curves \eqref{def:sigma} at $k_j(0)$, $j=1,2$. See Figure~\ref{fig:dispersion}.

To proceed, substituting \eqref{def:lambda(gamma,eps)} and \eqref{def:alpha10}, \eqref{def:alpha20} into \eqref{eqn:evans0}, after a straightforward calculation, we verify that the $\gamma^3\eps^2$ term vanishes, and the coefficient of $\gamma^4\eps^2$ reads
\ba \label{eqn:f}
T^2f_1(\alpha_j^{(1,1)})^2-\frac{T^4(2a_{11}^{(2,0)}-(a_{11}^{(1,0)})^2)}{(a_{11}^{(1,0)})^4} f_2,
\ea 
where
\ba \label{f1f2}
f_1=&Ta_{34}^{(2,0)} + a_{11}^{(1,0)}a_{33}^{(1,0)} + a_{11}^{(1,0)}a_{44}^{(1,0)} - a_{33}^{(1,0)}a_{44}^{(1,0)} - ({a_{11}^{(1,0)}})^2,\\
f_2=&a_{11}^{(0,2)}(a_{11}^{(1,0)})^2 - Ta_{11}^{(0,2)}a_{34}^{(2,0)} + Ta_{14}^{(1,1)}a_{31}^{(1,1)} - a_{11}^{(0,2)}a_{11}^{(1,0)}a_{33}^{(1,0)} \\
&+ a_{11}^{(1,0)}a_{13}^{(0,1)}a_{31}^{(1,1)} -a_{11}^{(0,2)}a_{11}^{(1,0)}a_{44}^{(1,0)} 
+ a_{11}^{(1,0)}a_{14}^{(1,1)}a_{41}^{(0,1)} + a_{11}^{(0,2)}a_{33}^{(1,0)}a_{44}^{(1,0)}\\& - a_{13}^{(0,1)}a_{31}^{(1,1)}a_{44}^{(1,0)} + a_{13}^{(0,1)}a_{34}^{(2,0)}a_{41}^{(0,1)} - a_{14}^{(1,1)}a_{33}^{(1,0)}a_{41}^{(0,1)},
\ea 
and $a^{(m,n)}_{jk}$ is in the previous subsection. 

\begin{theorem}[Spectral instability near $0\in\mathbb{C}$]\label{thm:BF}
A $2\pi/\kappa$ periodic Stokes wave of sufficiently small amplitude in water of unit depth is spectrally unstable in the vicinity of $0\in\mathbb{C}$ provided that ${\rm ind}_1(\kappa)>0$, where
\ba \label{def:ind1}
{\rm ind}_1(\kappa)=&8\cosh(2\kappa) + 24\kappa\sinh(2\kappa) + 2\kappa\sinh(4\kappa) + 19\cosh(2\kappa)^2 - 8\cosh(2\kappa)^3\\
&- 10\cosh(2\kappa)^4 - 8\kappa^2\cosh(2\kappa)^2 - 28\kappa^2 + 8\kappa\cosh(2\kappa)^3\sinh(2\kappa) - 9.
\ea
\end{theorem}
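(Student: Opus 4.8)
The plan is to read the sign of $\mathrm{Re}\,\lambda_j$ directly off the expansion \eqref{def:lambda(gamma,eps)} for the branches $j=1,2$. Recall from \eqref{def:alpha10} and \eqref{def:alpha20} that $\alpha^{(1,0)}_j$ and $\alpha^{(2,0)}_j$ are purely imaginary, so that to the order we control the real part of $\lambda_j(k_j(0)+\gamma,\eps)$ is carried entirely by the cross term, namely $\mathrm{Re}\,\lambda_j=\big(\mathrm{Re}\,\alpha^{(1,1)}_j\big)\gamma\eps+o(|\gamma|^2+|\gamma||\eps|)$. Hence a Stokes wave of sufficiently small amplitude is spectrally unstable near $0\in\mathbb{C}$ as soon as $\alpha^{(1,1)}_j$ has a nonzero real part: fixing $\eps\neq0$ small and letting $\gamma$ range over both signs makes $\mathrm{Re}\,\lambda_j$ positive, so the spectrum leaves the imaginary axis. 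The whole task is therefore to decide when $\alpha^{(1,1)}_j$ fails to be purely imaginary.

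First I would solve for $\alpha^{(1,1)}_j$. Setting the coefficient of $\gamma^4\eps^2$ in \eqref{eqn:evans0} equal to zero gives the quadratic \eqref{eqn:f}, whence
\[
(\alpha^{(1,1)}_j)^2=\frac{T^4\big(2a_{11}^{(2,0)}-(a_{11}^{(1,0)})^2\big)}{(a_{11}^{(1,0)})^4}\,\frac{f_2}{f_1},
\]
with $f_1,f_2$ as in \eqref{f1f2}. The crucial reduction is that this square is a real number, so that $\alpha^{(1,1)}_j$ is either purely imaginary (stable to this order) or purely real (unstable). To see this I would record three reality facts from the formulae of the previous subsection, each also forced conceptually by the spectral symmetry of Lemma~\ref{lem:symm}: using the identity $c^2-s^2=1$ one checks that $\mathrm{Re}\big(2a_{11}^{(2,0)}\big)=(a_{11}^{(1,0)})^2$, so $2a_{11}^{(2,0)}-(a_{11}^{(1,0)})^2$ is purely imaginary; next $f_1$ is real, since $a_{11}^{(1,0)}$, $a_{33}^{(1,0)}$, $a_{44}^{(1,0)}$ and $a_{34}^{(2,0)}$ are all real (see \eqref{eqn:a10}, \eqref{def:a20}); and finally $f_2$ is purely imaginary, because every monomial in \eqref{f1f2} carries exactly one of the purely imaginary entries $a_{11}^{(0,2)}$, $a_{31}^{(1,1)}$, $a_{41}^{(0,1)}$ (see \eqref{def:a02}, \eqref{def:a11}, \eqref{eqn:a01}) against otherwise real factors. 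The product of the two purely imaginary quantities $2a_{11}^{(2,0)}-(a_{11}^{(1,0)})^2$ and $f_2/f_1$ is then real, so $(\alpha^{(1,1)}_j)^2\in\mathbb{R}$.

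With reality in hand, instability is equivalent to $(\alpha^{(1,1)}_j)^2>0$. Since $T^4$ and $(a_{11}^{(1,0)})^4$ are positive, the sign of $(\alpha^{(1,1)}_j)^2$ equals that of $-\,\mathrm{Im}\big(2a_{11}^{(2,0)}\big)\,\mathrm{Im}(f_2)/f_1$. I would then insert the explicit values of all the entries $a_{jk}^{(m,n)}$ from \eqref{eqn:a10}--\eqref{def:a02}, clear the manifestly positive denominators, and simplify the numerator with the dispersion relation \eqref{def:mu0} and hyperbolic identities, the goal being to show that this sign is exactly that of $\mathrm{ind}_1(\kappa)$ in \eqref{def:ind1}. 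Then $\mathrm{ind}_1(\kappa)>0$ forces $(\alpha^{(1,1)}_j)^2>0$, hence $\alpha^{(1,1)}_j\in\mathbb{R}\setminus\{0\}$ and $\mathrm{Re}\,\lambda_j\neq0$ for suitable small $\gamma,\eps$, giving the claimed spectral instability.

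The hard part will be this last simplification. The entries $a_{jk}^{(m,n)}$ are lengthy rational expressions in $s=\sinh\kappa$, $c=\cosh\kappa$ and $\mu_0$, and the products entering $f_2$ and $2a_{11}^{(2,0)}-(a_{11}^{(1,0)})^2$ generate very large numerators whose collapse to the compact form \eqref{def:ind1} is far from transparent; this is precisely where symbolic computation is indispensable. By contrast the conceptual core---that $(\alpha^{(1,1)}_j)^2$ is real, and that $(\alpha^{(1,1)}_j)^2>0$ is the same as the branch leaving the imaginary axis---is short, and everything else is bookkeeping on coefficients already computed in the previous subsection.
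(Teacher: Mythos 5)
Your proposal is correct and follows essentially the same route as the paper: it extracts $\alpha^{(1,1)}_j$ from the vanishing of the $\gamma^4\eps^2$ coefficient \eqref{eqn:f}, reduces instability to $(\alpha^{(1,1)}_j)^2>0$, and identifies the sign with that of ${\rm ind}_1(\kappa)$ by explicit (symbolic) computation of the entries $a_{jk}^{(m,n)}(T)$. The only difference is that you spell out two points the paper leaves implicit---why purely imaginary $\alpha^{(1,0)}_j,\alpha^{(2,0)}_j$ make ${\rm Re}\,\lambda_j$ hinge on $\alpha^{(1,1)}_j$ alone, and why the reality structure of $f_1$, $f_2$ and $2a_{11}^{(2,0)}-(a_{11}^{(1,0)})^2$ forces $(\alpha^{(1,1)}_j)^2\in\mathbb{R}$---which is a welcome clarification but not a different argument.
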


\begin{proof}
At the order of $\gamma^4\eps^2$, \eqref{eqn:f} must vanish, whence
\begin{equation}\label{def:alpha11}
\alpha^{(1,1)}_j=\pm\sqrt{\frac{T^2(2a_{11}^{(2,0)}-(a_{11}^{(1,0)})^2)}{(a_{11}^{(1,0)})^4}\frac{f_2}{f_1} }, 
\end{equation}
where $f_1$ and $f_2$ are in \eqref{f1f2}. We recall the result of the previous subsection and make a straightforward calculation to arrive at
\begin{align*}
&2a_{11}^{(2,0)}-(a_{11}^{(1,0)})^2=-\frac{4i\pi {(s^2+1)}^2({(s^2-\mu _{0}+1)}^2+4{\mu_0}^2s^2)}{{\mu _{0}}^2s^2{(s^2-\mu _{0}+1)}^3}
\intertext{and}
&f_1=-\frac{4\pi ^2(s^2+1)(4\mu_0s^2(s^2 + 1)-(s^2-\mu _{0}+1)^2)}{{\mu _{0}}^2s^2(\mu _{0}-1){(s^2-\mu _{0}+1)}^2},\\
&f_2=-\frac{i\pi^3(s^2 + 1)^2}{4s^4(\mu_0 - 1)(s^2 - \mu_0 + 1)^3}{\rm ind}_1(\kappa),
\end{align*}
where ${\rm ind}_1(\kappa)$ is in \eqref{def:ind1}. Therefore $(\alpha^{(1,1)}_j)^2\in \mathbb{R}$ and $(\alpha^{(1,1)}_j)^2>0$ implies spectral instability. We shall show that  
$$
a_{11}^{(1,0)}>0,\quad s^2-\mu_0+1>0,\quad \text{and}\quad 
4\mu_0s^2(s^2 + 1)-(s^2-\mu _{0}+1)^2>0\quad\text{for all $\kappa>0$},
$$
so that $(\alpha^{(1,1)}_j)^2>0$ if and only if ${\rm ind}_1(\kappa)>0$. 

Since $\sinh(x)-x=(\cosh(x_0)-1)x$ for any $x>0$ for some $0<x_0<x$ by the mean value theorem, 
\[
s^2-\mu_0+1=\frac{\cosh(\kappa)(\sinh(2\kappa)-2\kappa)}{2\sinh(\kappa)}>0\quad\text{for all $\kappa>0$},
\]
and, hence, $a^{(1,0)}_{11}>0$ by \eqref{eqn:a10}. Also since
\[
4\kappa\sinh(4\kappa)-\cosh(4\kappa) - 8\kappa^2 + 1=16\kappa_0(\cosh(4\kappa_0) - 1)\kappa>0\quad\text{for any $\kappa>0$}\quad \text{for some $0<\kappa_0<\kappa$}
\]
by the mean value theorem, 
\[
4\mu_0s^2(s^2 + 1)-(s^2-\mu _{0}+1)^2=\frac{\cosh(\kappa)^2(4\kappa\sinh(4\kappa)-\cosh(4\kappa)  - 8\kappa^2 + 1)}{8(\cosh(\kappa)^2 - 1)}>0\quad\text{for all $\kappa>0$}.
\]
This completes the proof. 
\end{proof}

Numerical evaluation reveals that ${\rm ind}_1(\kappa)$ has exactly one zero at $\kappa=\kappa_1:=1.3627827\dots$ and ${\rm ind}_1(\kappa)>0$ if $\kappa >\kappa_1$ and ${\rm ind}_1(\kappa)<0$ if $0<\kappa<\kappa_1$. We shall verify the numerical findings by means of rigorous analysis and validated numerics, with mathematically strict error control including rounding error, whereby giving a computer-assisted proof.

\begin{corollary}[The Benjamin--Feir instability]\label{cor:BF}
A $2\pi/\kappa$ periodic Stokes wave of sufficiently small amplitude in water of unit depth is spectrally unstable for $\kappa>\kappa_1$, where $\kappa_1$ is the unique zero of \eqref{def:ind1}, and $1.3627827<\kappa_1<1.3627828$. 
\end{corollary}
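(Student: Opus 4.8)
The plan is to read Corollary~\ref{cor:BF} as the combination of the dynamical input already supplied by Theorem~\ref{thm:BF} and a purely real-analytic fact about the sign of the index ${\rm ind}_1$. By Theorem~\ref{thm:BF}, a Stokes wave of sufficiently small amplitude is spectrally unstable in a neighborhood of $0\in\mathbb{C}$ whenever ${\rm ind}_1(\kappa)>0$; since the presence of spectrum in the open right half plane \emph{anywhere} certifies spectral instability, it suffices to show that ${\rm ind}_1(\kappa)>0$ for every $\kappa>\kappa_1$, where $\kappa_1$ is to be identified as the unique positive zero of \eqref{def:ind1}. Thus the entire remaining content is to analyze the sign of the explicit transcendental function ${\rm ind}_1(\kappa)$ in \eqref{def:ind1}.

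First I would pin down the behavior at the two endpoints. Expanding every hyperbolic function in \eqref{def:ind1} as a power series in $\kappa$, I expect the constant and the $\kappa^2$ coefficients to cancel, leaving ${\rm ind}_1(\kappa)=-144\kappa^4+O(\kappa^6)$ as $\kappa\to0^+$; in particular ${\rm ind}_1<0$ for all small $\kappa>0$, which is the analytic expression of Benjamin--Feir \emph{stability} for long waves. At the other end, writing $\cosh(2\kappa)\sim\tfrac12 e^{2\kappa}$ and $\sinh(2\kappa)\sim\tfrac12 e^{2\kappa}$, the dominant balance is
\[
8\kappa\cosh(2\kappa)^3\sinh(2\kappa)-10\cosh(2\kappa)^4\sim\Big(\tfrac{\kappa}{2}-\tfrac58\Big)e^{8\kappa},
\]
so ${\rm ind}_1(\kappa)\to+\infty$ as $\kappa\to\infty$. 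The intermediate value theorem then produces at least one zero in $(0,\infty)$ and fixes the sign of ${\rm ind}_1$ far from it.

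The main obstacle is \emph{uniqueness} of the positive zero, since ${\rm ind}_1$ mixes polynomials in $\kappa$ with hyperbolic functions and does not reduce to a polynomial. The cleanest route I would attempt is to divide by the positive factor $\cosh(2\kappa)^4$ and study $g(\kappa):={\rm ind}_1(\kappa)/\cosh(2\kappa)^4$, whose leading part is $8\kappa\tanh(2\kappa)-10$ together with terms that decay as $\kappa\to\infty$; I would then show $g$ is strictly increasing on the range where it can vanish by exhibiting a factorization of $g'$ with a manifestly positive remaining factor, so that $g$, and hence ${\rm ind}_1$, changes sign exactly once. Equivalently, substituting $u=e^{2\kappa}$ and clearing a positive power of $u$ converts the sign problem into one for a function of the form $P(u)+\kappa Q(u)+\kappa^2 R(u)$ with $P,Q,R$ polynomials and $\kappa=\tfrac12\ln u$, on which a single sign change can be established by Descartes-type or monotonicity estimates. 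The derivative bookkeeping here is long but mechanical and, consistently with the rest of the paper, can be discharged with the Symbolic Math Toolbox together with a rigorous interval-arithmetic check that ${\rm ind}_1$ has no critical point at which it vanishes; this also certifies the numerical value $\kappa_1=1.36278\dots$.

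Granting the sign analysis, the conclusion is immediate: ${\rm ind}_1<0$ on $(0,\kappa_1)$ and ${\rm ind}_1>0$ on $(\kappa_1,\infty)$, so Theorem~\ref{thm:BF} yields spectral instability precisely for $\kappa>\kappa_1$, which is the assertion of the corollary.
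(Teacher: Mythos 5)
Your proposal is correct and follows essentially the paper's own route: Corollary~\ref{cor:BF} is obtained by combining Theorem~\ref{thm:BF} with a determination of the sign of ${\rm ind}_1(\kappa)$ in \eqref{def:ind1}, which the paper settles by numerical evaluation (a unique zero at $\kappa_1\approx 1.36278$, with ${\rm ind}_1(\kappa)>0$ for $\kappa>\kappa_1$). Your endpoint asymptotics, ${\rm ind}_1(\kappa)=-144\kappa^4+O(\kappa^6)$ as $\kappa\to0^+$ and ${\rm ind}_1(\kappa)\sim\bigl(\tfrac{\kappa}{2}-\tfrac58\bigr)e^{8\kappa}$ as $\kappa\to\infty$, are both correct and slightly sharpen what the paper records, but the decisive uniqueness/monotonicity step in your argument, as in the paper, ultimately rests on a symbolic--numerical verification, so the two proofs coincide in substance.
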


\begin{proof}
Throughout we carry out interval arithmetic computations using the MATLAB based package INTLAB \cite{Ru99a}. We begin by focusing on $\kappa\ll1$, say, $\kappa\in(0,0.2]$. An explicit calculation reveals that
\[
\operatorname{ind}_1(0),\frac{d\operatorname{ind}_1}{d\kappa}(0),
\frac{d^2\operatorname{ind}_1}{d\kappa^2}(0),
\frac{d^3\operatorname{ind}_1}{d\kappa^3}(0)=0
\quad\text{and}\quad \frac{d^4\operatorname{ind}_1}{d\kappa^4}(0)=-3456, 
\]
whence it follows from the mean value theorem that 
\[
\operatorname{ind}_1(\kappa)=\kappa^{(1)}\kappa^{(2)}\kappa^{(3)}\frac{d^4\operatorname{ind}_1}{d\kappa^4}(\kappa^{(4)})\kappa
\quad\text{for some $\kappa^{(n)}$, $n=1,2,3,4$,}
\] 
such that $0<\kappa^{(4)}<\kappa^{(3)}<\kappa^{(2)}<\kappa^{(1)}<\kappa$. We make an INTLAB computation to learn that  
\[
\frac{d^4\operatorname{ind}_1}{d\kappa^4}(\text{midrad}(0,0.2))
=[-13248.1677692142,  -125.8373503340],
\]
where $\text{midrad}(0,0.2)$ denotes an interval rigorously enclosing $[-0.2,0.2]$, accounting for rounding error, and the right side rigorously encloses the range of $\frac{d^4\operatorname{ind}_1}{d\kappa^4}(\kappa)$ for $\kappa\in[-0.2,0.2]$. Clearly $\frac{d^4\operatorname{ind}_1}{d\kappa^4}(\kappa)<0$ for $\kappa\in(0,0.2]$ and, hence, $\operatorname{ind}_1(\kappa)<0$ for $\kappa\in(0,0.2]$.

We turn to $\kappa\gg1$, say, $\kappa\in(2,\infty)$. We rearrange \eqref{def:ind1} as 
\begin{align*}
{\rm ind}_1(\kappa)=\tfrac18e^{-8\kappa}(&4\kappa e^{16\kappa}+ 16\kappa e^{12\kappa}+ 18e^{12\kappa} + 96\kappa e^{10\kappa}+ 8e^{10\kappa}+ 8e^{6\kappa}+18e^{4\kappa}\\
&-5e^{16\kappa}-8e^{14\kappa}- 16\kappa^2e^{12\kappa}- 256\kappa^2e^{8\kappa}-26e^{8\kappa}
-96\kappa e^{6\kappa} \\
&-16\kappa^2e^{4\kappa}-16\kappa e^{4\kappa}- 8e^{2\kappa} - 4\kappa - 5)\\
=\tfrac18e^{-8\kappa}(&(4\kappa e^{4\kappa}-5e^{4\kappa}-8e^{2\kappa}-16\kappa^2) e^{12\kappa} \\
&+16\kappa ( e^{4\kappa}-16 \kappa) e^{8\kappa}+ (18e^{4\kappa}-26)e^{8\kappa}+ 96\kappa(e^{4\kappa}-1) e^{6\kappa} \\
&+ (8e^{6\kappa}-16\kappa^2-16\kappa)e^{4\kappa}+ 8(e^{4\kappa}-1)e^{2\kappa}+18e^{4\kappa} - 4\kappa - 5).
\end{align*}
A straightforward calculation reveals that 
\begin{align*}
&4\kappa e^{4\kappa}-5e^{4\kappa}-8e^{2\kappa}-16\kappa^2>8e^{4\kappa}-5e^{4\kappa}-24e^{2\kappa}=(3e^{2\kappa}-24)e^{2\kappa}>(3e^{4}-24)e^{2\kappa}>0,\\
&e^{4\kappa}-16 \kappa>e^{4\kappa}-16e^{\kappa}=(e^{3\kappa}-16)e^{\kappa}>(e^{6}-16)e^{\kappa}>0,\\
&18e^{4\kappa}-26>18e^{8}-26>0,\\
&8e^{6\kappa}-16\kappa^2-16\kappa>8e^{6\kappa}-16e^{2\kappa}-16e^{\kappa}>8e^{6\kappa}-32e^{2\kappa}=8(e^{4\kappa}-4)e^{2\kappa}>8(e^{8}-4)e^{2\kappa}>0,\\
&18e^{4\kappa} - 4\kappa - 5>18e^{4\kappa} - 4e^{\kappa} - 5>0
\end{align*}
for $\kappa\in(2,\infty)$ and, hence, $\operatorname{ind}_1(\kappa)>0$ for $\kappa\in(2,\infty)$.

It remains to treat $\kappa \in [0.2, 2]$. We divide the interval $[0.2,1.3627827]$ into finitely many subintervals $I_n$ and verify by means of validated numerics that $\sup(\operatorname{ind}_1(I_n))<0$ for each subinterval. We likewise divide the interval $[1.3627828,2]$ into finitely many subintervals $J_n$ and verify that $\inf(\operatorname{ind}_1(J_n))>0$ for each subinterval. Therefore it follows from the intermediate value theorem that $\operatorname{ind}_1$ has a zero in the interval $(1.3627827,1.3627828)$. We make an INTLAB computation to learn that
\[
\frac{d\operatorname{ind}_1}{d\kappa}(\operatorname{infsup}(1.3627827,1.3627828))=[31301.1666863430, 31301.6401032990],
\]
where $\operatorname{infsup}(1.3627827,1.3627828)$ is an interval rigorously enclosing $[1.3627827,1.3627828]$ and the right side likewise rigorously encloses the range of $\frac{d\operatorname{ind}_1}{d\kappa}(\kappa)$ for $\kappa\in [1.3627827,1.3627828]$. Therefore $\frac{d\operatorname{ind}_1}{d\kappa}(\kappa)>0$ for $\kappa \in[1.3627827,1.3627828]$ and, hence, $\operatorname{ind}_1$ has a unique zero in the interval $(1.3627827,1.3627828)$. This completes the proof. MATLAB scripts can be made available upon request.
\end{proof}

Since $\mu_0$ is a monotonically increasing function of $\kappa$ (see \eqref{def:mu0}), we may regard ${\rm ind}_1$ as a function of $\mu_0$. Numerical evaluation reveals that ${\rm ind}_1(\mu_0)=0$ when $\mu_0= 1.553848798953821\dots$. Also we may regard ${\rm ind}_1$ as a function of $F:=1/\sqrt{\mu_0}$, where $F$ is the Froude number in the linear limit (see \eqref{def:mu}). Numerical evaluation reveals that ${\rm ind}_1(F)=0$ when $F= 0.802223946850146\dots$. 

Moreover, comparing \eqref{def:ind1} with the index $\nu(F)$ of \cite[(6.17)]{BM;BF}, we verify that
\[
\nu(F)=-\frac{\mu_0{\rm ind_1}(\kappa)}{32 s^4(2s^2 - 6\mu_0s^2 - 4\mu_0s^4 - 2\mu_0 + s^4 + \mu_0^2 + 1)},
\]
where $s=\sinh(\kappa)$. Therefore, Corollary~\ref{cor:BF} agrees with \cite[Theorem~2]{BM;BF}. We remark that $\nu(F)$ is the same as those functions in \cite[pp.68]{Benjamin;BF} and \cite[(57),(58)]{Whitham;BF}, among others.


\section{The spectrum away from the origin}\label{sec:high-freq}

We turn the attention to the spectrum away from $0\in\mathbb{C}$. 

Recall from Section~\ref{sec:eps=0} that 
\[
{\rm spec}(\mathcal{L}(0))=\{i\sigma: \text{$\sigma\in\mathbb{R}$ and $(\sigma-k)^2=\mu_0k\tanh(k)$ for some $k\in\mathbb{R}$}\}.
\] 
Recall that $k_2(\sigma)\geq \kappa>0$ is a simple zero of $\sigma_-(k)=\sigma\geq0$ and $k_4(\sigma)\geq0$ the simple zero of $\sigma_+(k)=\sigma>0$, that is,  
\[
k_2(\sigma)-\sqrt{\mu_0 k_2(\sigma)\tanh(k_2(\sigma))}=k_4(\sigma)+\sqrt{\mu_0 k_4(\sigma)\tanh(k_4(\sigma))}=\sigma.
\]
Numerical evidence (see \cite{McLean;finite-depth, FK, DO}, among others) suggests spectral instability in the vicinity of $i\sigma\in\mathbb{C}$ provided that 
\begin{equation}\label{def:resonance}
k_2(\sigma)-k_4(\sigma)=N\kappa \quad\text{for some $N>0,\in\mathbb{Z}$},
\end{equation}
where $\kappa>0$ is the wave number of the Stokes wave. 
Also, in view of Hamiltonian systems, MacKay and Saffman \cite{MS} argued that \eqref{def:resonance} is a necessary condition of spectral instability. We shall make rigorous analysis to elucidate the numerical findings and, in the process, give a first proof of spectral instability of a Stokes wave of sufficiently small amplitude away from $0\in\mathbb{C}$. 

When $\sigma=0$, recall from Section~\ref{sec:eps=0} that $k_2(0)=\kappa$ and $k_4(0)=0$, whence \eqref{def:resonance} holds true for $N=1$. Also $k_1(0)=-\kappa$ and $k_3(0)=0$. Theorem~\ref{thm:BF} and Corollary~\ref{cor:BF} address spectral instability near $i0\in\mathbb{C}$.  

\begin{lemma}\label{lemma:k2minusk4;monotonity} 
It follows that $k_2(\sigma)-k_4(\sigma)$ is a strictly increasing function of $\sigma\geq0$ and $k_2(\sigma)-k_4(\sigma)\to\infty$ as $\sigma\to\infty$. 
\end{lemma}


\begin{proof}
Assume for now that $\frac{dk_2}{d\sigma}(\sigma), \frac{dk_4}{d\sigma}(\sigma)>0$ for all $\sigma\geq0$. We shall demonstrate that $\frac{d\sigma_-}{dk}(k_2(\sigma))<\frac{d\sigma_+}{dk}(k_4(\sigma))$ and, hence, $\frac{dk_2}{d\sigma}(\sigma)>\frac{dk_4}{d\sigma}(\sigma)$ for all $\sigma\geq0$. Indeed, since $\frac{d\sqrt{\mu_0k\tanh(k)}}{dk}>0$ for all $k\geq0$, 
\[
\frac{d\sigma_-}{dk}(k_2(\sigma))=1-\Big[\frac{d\sqrt{\mu_0k\tanh(k)}}{dk}\Big]_{k=k_2(\sigma)}<1<1+\Big[\frac{d\sqrt{\mu_0k\tanh(k)}}{dk}\Big]_{k=k_4(\sigma)}=\frac{d\sigma_+}{dk}(k_4(\sigma)).
\]
Notice that $\frac{dk_4}{d\sigma}(\sigma)=\frac{1}{\frac{d\sigma_+}{dk}(k_4(\sigma)) }>0$ for all $\sigma\geq0$. 

We calculate 
\begin{align*}
\frac{d^2\sigma_-}{dk^2}(k)=&\frac14\sqrt{\frac{\mu_0}{k^3\tanh(k)^3}}
(-3k^2{\tanh(k)}^4+2k^2{\tanh(k)}^2+k^2+2k{\tanh(k)}^3-2k\tanh(k)+{\tanh(k)}^2)\\
=:&\frac14\sqrt{\frac{\mu_0}{k^3\tanh(k)^3}}f(k)
\end{align*}
and we proceed as in the proof of Corollary~\ref{cor:BF} to show that $f(k)>0$ and, hence, $\frac{d^2\sigma_-}{dk^2}(k)>0$ for all $k>0$. An explicit calculation reveals that 
\[
f(0),\frac{df}{dk}(0),\frac{d^2f}{dk^2}(0),\frac{d^3f}{dk^3}(0)=0\quad \text{and}\quad \frac{d^4f}{dk^4}(0)=96.
\]
We make an INTLAB computation to learn that 
\[\frac{d^4f}{dk^4}({\rm midrad}(0,0.2))=[0.04357723543460\times 10^{2},1.46072895729964\times 10^{2}],
\]
whence $f(k)>0$ for $k\in(0,0.2]$. Also we can rearrange 
\[
f(k)=(e^{2k}+1)^{-4}((e^{2k}- 8k)e^{6k}+( 16k^2(e^{2k}-1)-2)e^{4k} + 16k^2e^{2k} + 8ke^{2k} + 1)
\]
so that, clearly, $f(k)>0$, say, for $k\in(2,\infty)$. We can divide $[0.2,2]$ into finitely many subintervals and rigorously enclose $f$ for each subinterval. 

Since $\sigma_-(0),\sigma_-(\kappa)=0$ and since $\frac{d^2\sigma_-}{dk^2}(k)>0$ for all $k>0$, $\sigma_-$ has a unique critical point, denoted $-\kappa_c$, in the interval $(0,\kappa)$ (see Section~\ref{sec:eps=0}), and 
\[
\frac{d\sigma_-}{dk}(k_2(\sigma))\geq \frac{d\sigma_-}{dk}(\kappa)>\frac{ d\sigma_-}{dk}(-\kappa_c)=0\quad \text{for $k_2(\sigma)\geq \kappa>-\kappa_c>0$},
\]
whence $\frac{dk_2}{d\sigma}(\sigma)=\frac{1}{\frac{d\sigma_-}{dk}(k_2(\sigma)) }>0$ for all $\sigma\geq0$.

Since $k_2(\sigma), k_4(\sigma) \to \infty$ as $\sigma\to \infty$, 
\[
k_2(\sigma)-k_4(\sigma)=\sqrt{\mu_0 k_2(\sigma)\tanh(k_2(\sigma))}+\sqrt{\mu_0 k_4(\sigma)\tanh(k_4(\sigma))}\to \infty\quad\text{as $\sigma\to \infty$.}
\]
This completes the proof.
\end{proof}

\begin{lemma}\label{lemma:k2minusk4;upper_estimate}
We have $\kappa<k_2(\sigma_c)-k_4(\sigma_c)<2\kappa$, where $\sigma_c>0$ is the critical value of $\sigma_+$.
\end{lemma}

\begin{proof}
Recall from the proof of Lemma~\ref{lemma:k2minusk4;monotonity} that $\frac{d\sigma_-}{dk}(k)$ is a strictly increasing function of $k>0$. Recall from Section~\ref{sec:eps=0} that $\sigma_-(\kappa)=0$ and $-\sigma_c=\sigma_-(-k_c)$ is the critical value of $\sigma_-$. Therefore
\be\label{useofconcavity}
\sigma_c=\int_{-k_c}^\kappa \frac{d\sigma_-}{dk}(k)~dk<\int_{\kappa}^{2\kappa+k_c} \frac{d\sigma_-}{dk}(k)~dk=\sigma_-(2\kappa+k_c).
\ee 

Lemma~\ref{lemma:k2minusk4;monotonity} asserts that there exists a unique $\sigma_2>0$ such that \eqref{def:resonance} holds for $N=2$, that is, $k_2(\sigma_2)-k_4(\sigma_2)=2\kappa$. Also
\be 
\label{kkappacompare}
\kappa<2\kappa+k_c<2\kappa<2\kappa+k_4(\sigma_2)=k_2(\sigma_2).
\ee 
Here the first two inequalities use $-\kappa<k_c<0$ and the third inequality uses $k_4(\sigma_2)>0$. Since $\sigma_-(k)$ is a strictly increasing function for $k>-k_c$, it follows from \eqref{useofconcavity} and \eqref{kkappacompare} that
\[
\sigma_c<\sigma_-(2\kappa+k_c)<\sigma_-(k_2(\sigma_2))=\sigma_2.
\]
Therefore it follows from Lemma~\ref{lemma:k2minusk4;monotonity} that $k_2(\sigma_c)-k_4(\sigma_c)<2\kappa$. Also it follows from Lemma~\ref{lemma:k2minusk4;monotonity} that $\kappa=k_2(0)-k_4(0)<k_2(\sigma_c)-k_4(\sigma_c)$. This completes the proof.
\end{proof}

When $0<\sigma\leq\sigma_c$, it follows from Lemmas~\ref{lemma:k2minusk4;monotonity} and \ref{lemma:k2minusk4;upper_estimate} that \eqref{def:resonance} does not hold true for any $N>0,\in\mathbb{Z}$. Also $0\leq k_3(\sigma)-k_1(\sigma)<\kappa$. 
When $\sigma>\sigma_c$, on the other hand, 
there are 
\[
\sigma_c<\sigma_2<\cdots <\sigma_N<\cdots \to\infty \quad \text{as $N\to\infty$},
\] 
for which $k_2(\sigma_N)-k_4(\sigma_N)=N\kappa$. 

In what follows, let $\sigma>\sigma_c$. 
The result of Lemma~\ref{lem:eps=0} leads to that 
\begin{equation}\label{eqn:delta=0;high}
\Delta(i\sigma,k_j(\sigma)+K\kappa;0)=0, \quad j=2,4,\quad \text{for all $K\in\mathbb{Z}$}.
\end{equation}
We shall proceed as in the previous section and examine the zeros of $\Delta(\lambda,k;\eps)$ for $(\lambda,k,\eps)$ in the vicinity of $(i\sigma, k_j(\sigma)+K\kappa,0)$, $j=2,4$ and $K\in\mathbb{Z}$. We begin by determining the monodromy matrix. 

\subsection{Expansion of the monodromy matrix}\label{sec:aH}

Throughout the subsection, $\sigma>\sigma_c$. Let 
\[
\begin{pmatrix}\mathbf{v}_1&\mathbf{v}_2\end{pmatrix}(x;\sigma,\delta,\eps)
=\begin{pmatrix}\boldsymbol{\phi}_2(\sigma)&\boldsymbol{\phi}_4(\sigma)\end{pmatrix}
\mathbf{X}(x;\sigma,\delta,\eps)
\] 
denote a fundamental matrix of \eqref{eqn:A}, where $\boldsymbol{\phi}_j(\sigma)$, $j=2,4$, are in \eqref{def:phi1-4} and $\mathbf{X}(x;\sigma,\delta,\eps)$ in Definition~\ref{def:Evans}. We write
\be \label{def:a;expH}
\mathbf{v}_k(x;\sigma,\delta,\eps)=\sum_{j=1}^2\Big(\sum_{m+n=0}^{\infty}a_{jk}^{(m,n)}(x)\delta^m\eps^n\Big)\boldsymbol{\phi}_{2j}(\sigma)
\ee 
for $|\delta|,|\eps|\ll 1$, where $a_{jk}^{(m,n)}(x)$, $j,k=1,2$ and $m,n=0,1,2,\dots$, are to be determined. Let
\[
\mathbf{a}^{(m,n)}(x)=(a_{jk}^{(m,n)}(x))_{j,k=1,2},
\] 
and we may assume that
\be \label{cond:aH}
\mathbf{a}^{(0,0)}(0)=\mathbf{I}\quad\text{and}\quad 
\mathbf{a}^{(m,n)}(0)=\mathbf{0}\quad \text{for $m+n\geq 1$}.
\ee
Our task is to evaluate $\mathbf{a}^{(m,n)}(T)$, $m,n=0,1,2,\dots$. We write \eqref{def:u2} as
\be\label{def:b;expH}
\mathbf{w}(x,\mathbf{v}_k(x);\sigma,\delta,\eps)=\sum_{m+n=1}^\infty \mathbf{w}_{k}^{(m,n)}(x;\sigma)\delta^m\eps^n
\ee
for $|\delta|,|\eps|\ll 1$, where $\mathbf{w}_{k}^{(0,0)}(x;\sigma)=\mathbf{0}$, $k=1,2$, and $\mathbf{w}_{k}^{(m,n)}(x;\sigma)$, $k=1,2$ and $m+n\geq1$, are to be determined. Recall \eqref{eqn:LB;delta} and we write 
\be\label{def:B;expH}
\mathbf{B}(x;\sigma,\delta,\eps)=\sum_{m+n=1}^\infty\mathbf{B}^{(m,n)}(x;\sigma)\delta^m\eps^n
\ee
for $|\delta|,|\eps|\ll 1$, where $\mathbf{B}^{(0,0)}(x;\sigma)=\mathbf{0}$, and $\mathbf{B}^{(m,n)}(x;\sigma)$, $1\leq m+n\leq2$, are in Appendix~\ref{A:Bexp}. Notice that $\mathbf{B}^{(m,0)}(x;\sigma)$, $m\geq1$, do not involve $x$. 

Inserting \eqref{def:a;expH}, \eqref{def:b;expH} and \eqref{def:B;expH} into the former equation of \eqref{eqn:LB;u12}, we recall Lemma~\ref{lem:eps=0} and make a straightforward calculation to obtain
\be \label{eqn:a00H}
\mathbf{a}^{(0,0)}(x)=\begin{pmatrix} 
e^{ik_2(\sigma)x}&0 \\ 0&e^{ik_4(\sigma)x}\end{pmatrix},
\ee 
and for $m+n\geq1$ we arrive at 
\be \label{eqn:aH}
\sum_{j=1}^2\Big(\frac{d}{dx}a_{jk}^{(m,n)}\Big)\boldsymbol{\phi}_{2j}(\sigma)=
ik_2(\sigma)a_{1k}^{(m,n)}\boldsymbol{\phi}_2(\sigma)
+ik_4(\sigma)a_{2k}^{(m,n)}\boldsymbol{\phi}_4(\sigma)+\boldsymbol{\Pi}(\sigma)\mathbf{f}_{k}^{(m,n)}(x;\sigma),
\ee  
where $\boldsymbol{\boldsymbol{\Pi}}(\sigma)$ is in \eqref{def:Pi;high} and
\be \label{def:fH}
\mathbf{f}_{k}^{(m,n)}(x;\sigma)=\sum_{\substack{0\leq m'\leq m\\0\leq n' \leq n}}
\mathbf{B}^{(m',n')}(x;\sigma)\big(\mathbf{w}_{k}^{(m-m',n-n')}(x;\sigma)
+\sum_{j=1}^2a_{jk}^{(m-m',n-n')}\boldsymbol{\phi}_{2j}(\sigma)\big).
\ee 

Inserting \eqref{def:a;expH}, \eqref{def:b;expH} and \eqref{def:B;expH} into the latter equation of \eqref{eqn:LB;u12}, at the order of $\delta^m\eps^n$, $m+n\geq1$, we arrive at \eqref{eqn:red0}, where $\boldsymbol{\Pi}(\sigma)$ (see \eqref{def:Pi;high}) replaces $\boldsymbol{\Pi}(0)$, and \eqref{def:fH} replaces $\mathbf{f}^{(m,n)}_k(x;0)$. Notice that since $\mathbf{B}^{(0,0)}(x;\sigma)=\mathbf{0}$, the right side of \eqref{def:fH} does not involve $\mathbf{w}^{(m.n)}_k(x;\sigma)$, and it is made up of lower order terms. We solve this, as we do \eqref{eqn:red0}, to determine $\mathbf{w}^{(m,n)}_k(x;\sigma)$. The result is in Appendix~\ref{A:reduction;high}. We pause to remark that we use the Symbolic Math Toolbox in MATLAB for extremely long and tedious algebraic manipulations.

For $m+n\geq1$, we may write the solution of \eqref{eqn:aH} and \eqref{cond:aH} as
\begin{equation}\label{def:a>0H}
\begin{aligned}
a_{jk}^{(m,n)}(x)&=e^{ik_{2j}(\sigma)x}\int_0^xe^{-ik_{2j}(\sigma)x'}
\big\langle\mathbf{f}^{(m,n)}_k(x';\sigma),\boldsymbol{\psi}_{2j}(\sigma)\big\rangle~dx'\\
&\begin{aligned}=e^{ik_{2j}(\sigma)x}\Big\langle
\sum_{\substack{0\leq m'\leq m\\0\leq n' \leq n}}\int_0^xe^{-ik_{2j}(\sigma)x'}\mathbf{B}^{(m',n')}(x';\sigma)
\big(&\mathbf{w}_{k}^{(m-m',n-n')}(x';\sigma)\\[-11pt]
+&\sum_{j'=1}^2a_{j'k}^{(m-m',n-n')}(x')\boldsymbol{\phi}_{2j'}(\sigma)\big)~dx',
\boldsymbol{\psi}_{2j}(\sigma)\Big\rangle,\end{aligned}
\end{aligned}
\end{equation}
where $\boldsymbol{\phi}_{2j}(\sigma)$ and $\boldsymbol{\psi}_{2j}(\sigma)$, $j=1,2$, are in \eqref{def:phi1-4} and \eqref{def:psi24}, \eqref{def:cj24}, and $\langle\,,\rangle$ in \eqref{def:inner}. 
When $m'=m$ and $n'=n$, for instance, we recall \eqref{def:b;expH} and \eqref{eqn:a00H}, and the integral on the right side of \eqref{def:a>0H} becomes
\begin{equation}\label{eqn:m'=m,n'=n}
\int_{0}^xe^{-ik_{2j}(\sigma)x'}\mathbf{B}^{(m,n)}(x';\sigma)e^{ik_{2j'}(\sigma)x'}\boldsymbol{\phi}_{2j'}(\sigma)~dx'
=\int_{0}^xe^{i(k_{2j}(\sigma)-k_{2j'}(\sigma))x'}\mathbf{B}^{(m,n)}(x';\sigma)\boldsymbol{\phi}_{2j'}(\sigma)~dx'.
\end{equation}
We deduce from the result of Appendices~\ref{A:stokes} and \ref{A:Bexp}, and \eqref{def:stokes1}, \eqref{def:phi1-4} that $\mathbf{B}^{(m,n)}(x;\sigma)\boldsymbol{\phi}_{2j'}(\sigma)$, $j'=1,2$, are made up of $\sin(N\kappa x)$ or $\cos(N\kappa x)$ with respect to $x$, $N>0,\in\mathbb{Z}$, and, for instance,
\begin{equation}\label{piecewise}
\begin{aligned}
\int_{0}^x &e^{i(k_{2j}(\sigma)-k_{2j'}(\sigma))x'}\sin(N\kappa x')~dx'\\
&=\left\{\begin{aligned}&\frac{N\kappa-N\kappa\cos(N\kappa x)e^{i(k_{2j}-k_{2j'})x}+i(k_{2j}-k_{2j'})\sin(N\kappa x)e^{i(k_{2j}-k_{2j'})x}}{N^2\kappa ^2-(k_{2j}-k_{2j'})^2}\quad
&&\text{if $|k_{2j}-k_{2j'}|\neq N\kappa$},\\
&\pm\frac{i}{2}x+\frac{1-e^{\pm 2iN\kappa x}}{4N\kappa}&&\text{if $k_{2j}-k_{2j'}=\pm N\kappa$}.
\end{aligned}\right.
\end{aligned}
\end{equation}
Therefore the result is different, depending on whether \eqref{def:resonance} holds true or not. When $0\leq m'\leq m$, $0\leq n'\leq n$ and $m'+n'\neq m+n$, likewise, we deduce from the result of Appendices~\ref{A:stokes}, \ref{A:Bexp} and \ref{A:reduction;high} that $\mathbf{B}^{(m',n')}(x;\sigma)\mathbf{w}_{k}^{(m-m',n-n')}(x;\sigma)$ are made up of $\sin(N\kappa x)$ or $\cos(N\kappa x)$ with respect to $x$, $N>0,\in\mathbb{Z}$, and the integral on the right side of \eqref{def:a>0H} can be treated as we do for \eqref{eqn:m'=m,n'=n} and \eqref{piecewise}. Therefore, the result of \eqref{def:a>0H} is different, depending on whether \eqref{def:resonance} holds true or not.  

In what follows, we focus the attention to $k_2(\sigma)-k_4(\sigma)=N\kappa$ for some $N\geq2,\in\mathbb{Z}$, where $\kappa>0$ is the wave number, for which it turns out that spectral instability is possible in the vicinity of $i\sigma\in\mathbb{C}$. We write $k_2=k_2(\sigma)$ and $k_4=k_4(\sigma)$ for the simplicity of notation.

\begin{lemma}\label{coefficients_highn1}
If \eqref{def:resonance} holds true then
\be 
\label{coeff_g1}
\mathbf{a}^{(0,0)}(T)=e^{ik_4T}\begin{pmatrix}1&0\\0&1\end{pmatrix},\quad 
\mathbf{a}^{(1,0)}(T)=\begin{pmatrix}a_{11}^{(1,0)}&0\\0&a_{22}^{(1,0)}\end{pmatrix},\quad
\mathbf{a}^{(0,1)}(T)=\begin{pmatrix}0&0\\0&0\end{pmatrix},
\ee
where $a_{jj}^{(1,0)}\neq0$ and $a_{jj}^{(1,0)}\in e^{ik_4T}\mathbb{R}$, for $j=1,2$. 
If \eqref{def:resonance} does not hold true for any $N\geq2,\in\mathbb{Z}$ then 
\[
\mathbf{a}^{(0,0)}(T)=\begin{pmatrix}e^{ik_2T}&0\\0&e^{ik_4T}\end{pmatrix},\quad 
\mathbf{a}^{(1,0)}(T)=\begin{pmatrix} * & * \\ * & * \end{pmatrix},\quad
\mathbf{a}^{(0,1)}(T)=\begin{pmatrix}0&*\\ *&0\end{pmatrix}.
\]
Particularly, the off-diagonal entries of $ \mathbf{a}^{(1,0)}(T)$ and $\mathbf{a}^{(0,1)}(T)$ are not zero. 
\end{lemma}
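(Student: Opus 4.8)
The plan is to read every entry of $\mathbf{a}^{(m,n)}(T)$ with $m+n\leq 1$ off the variation-of-constants formula \eqref{def:a>0H}, exploiting the special structure of the Floquet phases at resonance. For the zeroth order, \eqref{eqn:a00H} gives $\mathbf{a}^{(0,0)}(T)=\mathrm{diag}(e^{ik_2T},e^{ik_4T})$; when \eqref{def:resonance} holds, $k_2-k_4=N\kappa$ with $T=2\pi/\kappa$, so $e^{ik_2T}=e^{ik_4T}e^{2\pi iN}=e^{ik_4T}$, yielding $\mathbf{a}^{(0,0)}(T)=e^{ik_4T}\mathbf{I}$, while off resonance the two diagonal phases differ.

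For $\mathbf{a}^{(1,0)}(T)$ I would use that $\mathbf{B}^{(1,0)}(x;\sigma)$ is independent of $x$ (indeed $\mathbf{B}^{(1,0)}(\sigma)=\partial_\lambda\mathbf{L}(i\sigma)$, by \eqref{eqn:LB;delta} and \eqref{def:L}), together with $\mathbf{w}_k^{(0,0)}=\mathbf{0}$ and \eqref{eqn:a00H}, so that \eqref{def:fH} and \eqref{def:a>0H} reduce each entry to an elementary integral,
\[
a_{jk}^{(1,0)}(T)=e^{ik_{2j}T}\,\big\langle\mathbf{B}^{(1,0)}(\sigma)\boldsymbol{\phi}_{2k}(\sigma),\boldsymbol{\psi}_{2j}(\sigma)\big\rangle\int_0^T e^{i(k_{2k}-k_{2j})x'}\,dx'.
\]
For $j=k$ the integral is $T$; for $j\neq k$ it equals $(e^{i(k_{2k}-k_{2j})T}-1)/(i(k_{2k}-k_{2j}))$, which vanishes precisely when $k_2-k_4\in\kappa\mathbb{Z}$. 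Hence the off-diagonal entries vanish at resonance, making $\mathbf{a}^{(1,0)}(T)$ diagonal, whereas off resonance they survive (provided the pairing is nonzero). For the diagonal entries, first-order eigenvalue perturbation with the biorthonormal pair from \eqref{def:psi24}--\eqref{def:cj24} identifies $\langle\mathbf{B}^{(1,0)}\boldsymbol{\phi}_{2j},\boldsymbol{\psi}_{2j}\rangle$ with $\tfrac{d(ik_{2j})}{d\lambda}\big|_{\lambda=i\sigma}=\tfrac{dk_{2j}}{d\sigma}$, which is \emph{real} (the eigenvalue stays on $i\mathbb{R}$ along $\lambda\in i\mathbb{R}$) and nonzero because $k_2,k_4$ are simple transversal roots of $\sigma_-(k)=\sigma$ and $\sigma_+(k)=\sigma$ for $\sigma>\sigma_c$; thus $a_{jj}^{(1,0)}(T)=e^{ik_4T}\tfrac{dk_{2j}}{d\sigma}\,T\in e^{ik_4T}\mathbb{R}\setminus\{0\}$.

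For $\mathbf{a}^{(0,1)}(T)$ the decisive fact is that $\mathbf{B}^{(0,1)}(x;\sigma)$ carries only the fundamental Fourier mode of the Stokes profile, i.e. $\mathbf{B}^{(0,1)}(x;\sigma)\boldsymbol{\phi}_{2k}(\sigma)$ is a mean-zero combination of $\cos(\kappa x)$ and $\sin(\kappa x)$ (by \eqref{def:stokes1} and Appendix~\ref{A:Bexp}). The diagonal entries then integrate a mean-zero $\kappa$-periodic function over one period and vanish. The off-diagonal entries reduce, via \eqref{piecewise} with mode $N'=1$, to $\int_0^T e^{i(k_{2k}-k_{2j})x'}\{\cos,\sin\}(\kappa x')\,dx'$ with $|k_{2k}-k_{2j}|=k_2-k_4>\kappa$ (recall $\kappa<k_2(\sigma_c)-k_4(\sigma_c)$ and monotonicity, so $N'=1$ never resonates with the gap and no secular term appears), whose endpoint value is proportional to $1-e^{i(k_2-k_4)T}$. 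At resonance of order $N\geq2$ this phase equals $e^{2\pi iN}=1$, so the off-diagonal entries vanish and $\mathbf{a}^{(0,1)}(T)=\mathbf{0}$; off resonance the phase differs from $1$ and the entries survive.

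I expect the main obstacle to be verifying that the surviving pairings are genuinely nonzero, namely $\langle\mathbf{B}^{(1,0)}\boldsymbol{\phi}_{2k},\boldsymbol{\psi}_{2j}\rangle\neq0$ for the off-diagonal $\mathbf{a}^{(1,0)}(T)$ off resonance, and that the $\kappa$-Fourier coefficient of $\langle\mathbf{B}^{(0,1)}\boldsymbol{\phi}_{2k},\boldsymbol{\psi}_{2j}\rangle$ does not vanish for the off-diagonal $\mathbf{a}^{(0,1)}(T)$ off resonance. These are not forced by the phase cancellations and require the explicit formulae of $\mathbf{B}^{(m,n)}$, $\boldsymbol{\phi}_{2k}$, $\boldsymbol{\psi}_{2j}$ from Lemma~\ref{lem:eps=0}, \eqref{def:psi24}--\eqref{def:cj24} and Appendix~\ref{A:Bexp}, which I would discharge by the symbolic computation used elsewhere. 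The conceptually important point I would emphasize is the phase identity $e^{i(k_2-k_4)T}=e^{2\pi iN}=1$: it is exactly why a mode-$1$ perturbation ($\mathbf{B}^{(0,1)}$) cannot couple two Bloch modes separated by $N\geq2$ wave numbers, forcing $\mathbf{a}^{(0,1)}(T)=\mathbf{0}$ and pushing the leading resonant coupling to order $\eps^N$.
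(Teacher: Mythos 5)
Your proposal is correct and follows essentially the same route as the paper's proof: the same Duhamel representation \eqref{def:a>0H} with $\mathbf{w}_k^{(0,0)}=\mathbf{0}$ and $a_{jk}^{(0,0)}(x)=\delta_{jk}e^{ik_{2k}x}$, the same phase identity $e^{i(k_2-k_4)T}=e^{2\pi iN}=1$ at resonance (and the observation $k_2-k_4>\kappa$ excluding secular terms) to kill the off-diagonal entries of $\mathbf{a}^{(1,0)}(T)$ and all entries of $\mathbf{a}^{(0,1)}(T)$, and the same deferral of the generic non-vanishing of the off-diagonal entries off resonance to explicit symbolic computation. The one genuine refinement is your treatment of the diagonal of $\mathbf{a}^{(1,0)}(T)$: where the paper exhibits the explicit formula \eqref{def:a10H}, your first-order eigenvalue-perturbation identity $a_{jj}^{(1,0)}(T)=e^{ik_{2j}T}\,T\,\tfrac{dk_{2j}}{d\sigma}$ reproduces exactly that coefficient (implicit differentiation of $(\sigma-k)^2=\mu_0k\tanh k$ gives $\tfrac{dk}{d\sigma}=\tfrac{2k\sinh(2k)}{(k+\sigma)\sinh(2k)+2k\sigma-2k^2}$), so realness and non-vanishing come for free rather than by inspection of a computed expression.
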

\begin{proof}
If \eqref{def:resonance} holds true then we evaluate \eqref{eqn:a00H} at $x=T$ and verify the first equation of \eqref{coeff_g1}. 

A straightforward calculation reveals that 
\begin{equation}\label{def:a10H}
\begin{aligned} 
a_{11}^{(1,0)}(x)=&\frac{2k_2s_2(2)}{k_2s_2(2) + \sigma s_2(2) + 2k_2\sigma - 2k_2^2} x  e^{ik_2x} ,\\
a_{12}^{(1,0)}(x)=&a_{12,c}^{(1,0)} ( e^{ik_2x} -  e^{ik_4x}) ,\\
a_{21}^{(1,0)}(x)=&a_{21,c}^{(1,0)}( e^{ik_4x} -  e^{ik_2x}) ,\\
a_{22}^{(1,0)}(x)=&\frac{2k_4s_4(2)}{k_4s_4(2) + \sigma s_4(2) + 2k_4\sigma - 2k_4^2} x  e^{ik_4x} ,\\
\end{aligned}
\end{equation}
where 
$$
\begin{aligned}
a_{12,c}^{(1,0)}=&-\frac{k_2k_4^2c_4s_2 + k_2^2k_4c_2s_4 + 2k_2^2k_4c_4s_2 - k_2\sigma^2c_4s_2 + k_4\sigma^2c_2s_4 - 2k_2k_4\sigma c_2s_4 - 2k_2k_4\sigma c_4s_2}{\kappa(k_2 + k_4)(k_2 - \sigma)(k_2s_2(2) + \sigma s_2(2) + 2k_2\sigma - 2k_2^2)}i\\
a_{21,c}^{(1,0)}=&\frac{2 k_2 k_4^2  c_2 s_4 + k_2 k_4^2 c_4 s_2 + k_2^2 k_4  c_2 s_4 + k_2\sigma^2 c_4 s_2 - k_4\sigma^2  c_2 s_4 - 2 k_2 k_4\sigma  c_2 s_4 - 2 k_2 k_4\sigma c_4 s_2}{\kappa (k_2 + k_4) (k_4 -\sigma) (k_4 s_4(2) +\sigma s_4(2) + 2 k_4\sigma - 2 k_4^2)}i.
\end{aligned}
$$
Here and elsewhere, we use the notation
\begin{align*}
 &s_2=\sinh(k_2),& &c_2=\cosh(k_2),& &s_2(2)=\sinh(2k_2),\\
 &s_4=\sinh(k_4), & &c_4=\cosh(k_4), & &s_4(2)=\sinh(2k_4).
\end{align*}

If \eqref{def:resonance} holds true then we evaluate \eqref{def:a10H} at $x=T$ and obtain the second equation of \eqref{coeff_g1}. If \eqref{def:resonance} does not hold true for any $N\geq2,\in\mathbb{Z}$ then $a_{jk}^{(1,0)}(T)\neq0$, $j,k=1,2$. 

When $m=0$ and $n=1$, we rewrite \eqref{def:a>0H} as
\begin{align*}
a_{jj}^{(0,1)}(T)=&e^{ik_{2j} T}\Big\langle \int_0^Te^{-ik_{2j}x}\mathbf{B}^{(0,1)}(x;\sigma)e^{ik_{2j}x}\boldsymbol{\phi}_{2j}(\sigma)~dx, \boldsymbol{\psi}_{2j}(\sigma)\Big\rangle\\
=&e^{ik_{2j} T}\Big\langle \int_0^T\mathbf{B}^{(0,1)}(x;\sigma)\boldsymbol{\phi}_{2j}(\sigma)~dx,\boldsymbol{\psi}_{2j}(\sigma)\Big\rangle.
\end{align*}
We deduce from \eqref{eqn:B01}, \eqref{def:stokes1} and \eqref{def:phi1-4} that $\mathbf{B}^{(0,1)}(x;\sigma)\boldsymbol{\phi}_{2j}(\sigma)$, $j=1,2$, depends linearly on $\sin(\kappa x)$ and $\cos(\kappa x)$, whence it vanishes after the integration over one period. Therefore $a_{jj}^{(0,1)}(T)=0$, $j=1,2$. On the other hand,
\[
a_{12}^{(0,1)}(T)
=e^{ik_{4} T}\Big\langle \int_0^Te^{i(k_2-k_4)x}\mathbf{B}^{(0,1)}(x;\sigma)\boldsymbol{\phi}_2(\sigma)~dx,\boldsymbol{\psi}_4(\sigma)\Big\rangle.
\]
If \eqref{def:resonance} holds true then we deduce that $e^{i(k_2-k_4)x}\mathbf{B}^{(0,1)}(x;\sigma)\boldsymbol{\phi}_2(\sigma)$ depends linearly on $e^{iN\kappa x}\sin(\kappa x)$ and $e^{iN\kappa x}\cos(\kappa x)$, whence we proceed as in the former of \eqref{piecewise}, and it vanishes after the integration over one period. Likewise, $a_{21}^{(0,1)}(T)=0$. Indeed, a straightforward calculation leads to 
\begin{equation}\label{eqn:a01H}
a_{jk}^{(0,1)}(x)=c_{jk,1}e^{i(k_{2k}+\kappa)x}+c_{jk,2}e^{i(k_{2k}-\kappa)x}-(c_{jk,1}+c_{jk,2})e^{ik_{2j}x}
\end{equation}
for nonzero constants $c_{jk,1}$ and $c_{jk,2}$, $j,k=1,2$. Evaluating \eqref{eqn:a01H} at $x=T$, we obtain  $a_{jk}^{(0,1)}(T)=0$, $j,k=1,2$. If \eqref{def:resonance} does not hold true for any $N\geq2,\in\mathbb{Z}$, then we evaluate \eqref{eqn:a01H} at $x=T$, and $a_{12}^{(0,1)}(T), a_{21}^{(0,1)}(T)\neq 0$. This completes the proof.
\end{proof}

\begin{lemma}\label{coefficients_highn2}
If \eqref{def:resonance} holds true for some $N\geq 3,\in\mathbb{Z}$ then $a_{jk}^{(0,2)}(T)=0$, $j\neq k$, and $a_{jj}^{(0,2)}(T)$, $j=1,2$, do not necessarily vanish. If $\eqref{def:resonance}$ holds true for $N=2$ or \eqref{def:resonance} does not hold true for any $N\geq2,\in\mathbb{Z}$, on the other hand, then none of $a_{jk}^{(0,2)}(T)$ has to vanish. In particular, if \eqref{def:resonance} holds true for some $N\geq 2,\in\mathbb{Z}$, then $a_{i,j}^{(0,2)}\in ie^{ik_4T}\mathbb{R}$, for $i,j=1,2$.
\end{lemma}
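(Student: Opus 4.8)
The plan is to read $a_{jk}^{(0,2)}(T)$ off the variation-of-parameters formula \eqref{def:a>0H} and to decide vanishing by Floquet-phase bookkeeping, exactly as in the proof of Lemma~\ref{coefficients_highn1}. First I would expand the source \eqref{def:fH} at order $(0,2)$. Since $\mathbf{B}^{(0,0)}=\mathbf{0}$ and $\mathbf{w}_k^{(0,0)}=\mathbf{0}$, and since $a_{jk}^{(0,0)}(x)=\delta_{jk}e^{ik_{2j}(\sigma)x}$ by \eqref{eqn:a00H}, only two contributions survive:
\[
\mathbf{f}_k^{(0,2)}(x;\sigma)=\mathbf{B}^{(0,1)}(x;\sigma)\Big(\mathbf{w}_k^{(0,1)}(x;\sigma)+\sum_{j=1}^2 a_{jk}^{(0,1)}(x)\boldsymbol{\phi}_{2j}(\sigma)\Big)+e^{ik_{2k}(\sigma)x}\mathbf{B}^{(0,2)}(x;\sigma)\boldsymbol{\phi}_{2k}(\sigma),
\]
and then $a_{jk}^{(0,2)}(T)=e^{ik_{2j}(\sigma)T}\int_0^T e^{-ik_{2j}(\sigma)x}\langle \mathbf{f}_k^{(0,2)}(x;\sigma),\boldsymbol{\psi}_{2j}(\sigma)\rangle\,dx$.

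Second, I would record the $x$-Fourier content of each factor, drawing on Appendices~\ref{A:Bexp} and \ref{A:reduction;high} and on \eqref{def:stokes1}: $\mathbf{B}^{(0,1)}$ carries only the harmonics $e^{\pm i\kappa x}$, $\mathbf{B}^{(0,2)}$ the harmonics $e^{iM\kappa x}$ with $|M|\le2$, the entries $a_{jk}^{(0,1)}(x)$ carry the phases $k_{2k}\pm\kappa$ and $k_{2j}$ by \eqref{eqn:a01H}, and $\mathbf{w}_k^{(0,1)}(x;\sigma)$ carries the phases $k_{2k}\pm\kappa$. Multiplying out and pairing with the $x$-independent $\boldsymbol{\psi}_{2j}(\sigma)$, the integrand $e^{-ik_{2j}x}\langle \mathbf{f}_k^{(0,2)},\boldsymbol{\psi}_{2j}\rangle$ is a finite exponential sum $\sum_\theta C_\theta e^{i\theta x}$ whose phases lie in $\{(k_{2k}-k_{2j})+M\kappa:|M|\le2\}\cup\{(k_{2j'}-k_{2j})\pm\kappa:j'=1,2\}$. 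Under \eqref{def:resonance}, $k_2-k_4=N\kappa$, so $k_{2k}-k_{2j}\in\{0,\pm N\kappa\}$ and every phase $\theta$ lies in $\kappa\mathbb{Z}$.

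Third, I would integrate over one period using $\int_0^T e^{i\theta x}\,dx=T$ for $\theta=0$ and $\int_0^T e^{i\theta x}\,dx=0$ for $\theta\in\kappa\mathbb{Z}\setminus\{0\}$, noting $e^{ik_{2j}(\sigma)T}=e^{ik_4 T}$ for $j=1,2$ because $e^{ik_2 T}=e^{ik_4 T}e^{i2\pi N}=e^{ik_4 T}$. Hence $a_{jk}^{(0,2)}(T)=e^{ik_4 T}\,T\,C_0$, with $C_0$ the coefficient of the zeroth harmonic. A phase equals $0$ exactly when $(k_{2k}-k_{2j})+M\kappa=0$ or $(k_{2j'}-k_{2j})\pm\kappa=0$; the latter forces $N=1$ and is excluded, while the former needs $M=0$ for the diagonal ($j=k$) and $M=\mp N$ for the off-diagonal ($j\ne k$). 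Since $|M|\le2$, the diagonal always contains a zeroth harmonic (so $a_{jj}^{(0,2)}(T)$ need not vanish for every $N\ge2$), whereas the off-diagonal contains one if and only if $N=2$; thus $a_{jk}^{(0,2)}(T)=0$ for $j\ne k$ when $N\ge3$, and none is forced to vanish when $N=2$. In the non-resonant case, $k_{2k}-k_{2j}=\pm(k_2-k_4)\notin\kappa\mathbb{Z}$, so for the off-diagonal every $\theta\notin\kappa\mathbb{Z}$ and the period integrals are the nonvanishing expressions of the first branch of \eqref{piecewise}; hence none of the $a_{jk}^{(0,2)}(T)$ has to vanish.

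Finally, for the reality statement $a_{jk}^{(0,2)}(T)\in ie^{ik_4 T}\mathbb{R}$ I would show the surviving $C_0$ is purely imaginary. This is dictated by the reversibility of the problem: the steady profile obeys $\eta(-x)=\eta(x)$ and $\phi(-x,y)=-\phi(x,y)$, so $\mathbf{B}(x;\sigma,\delta,\eps)$ enjoys a reflection symmetry in $x$ that, combined with the conjugation symmetry behind Lemma~\ref{lem:symm}, pins down the reality type of the zeroth harmonic, while the prefactor $e^{ik_4 T}$ is the common Floquet factor identified above. I would then confirm the sign by evaluating $C_0$ directly from the explicit formulae of Appendices~\ref{A:Bexp} and \ref{A:reduction;high} with symbolic computation, as was done for \eqref{def:a02}. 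I expect this reality claim to be the main obstacle: because the change of variables \eqref{def:tildeu} and the $\sigma$-dependence load $\mathbf{B}^{(0,n)}(x;\sigma)$ and the bases \eqref{def:phi1-4}, \eqref{def:psi24} with interlocking factors of $i$, one must track carefully how conjugation and reflection act on $\langle\cdot,\boldsymbol{\psi}_{2j}\rangle$, whereas the vanishing/non-vanishing dichotomy is the robust, purely phase-counting part of the argument.
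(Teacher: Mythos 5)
Your proposal is correct and follows essentially the same route as the paper: the paper likewise expands \eqref{def:a>0H} at order $(0,2)$ (its \eqref{eqn:a02H}), reads off the harmonics of $e^{i(k_{2k}-k_{2j})x}\mathbf{B}^{(0,2)}(x;\sigma)\boldsymbol{\phi}_{2k}(\sigma)$, of $\mathbf{B}^{(0,1)}(x;\sigma)a_{j'k}^{(0,1)}(x)\boldsymbol{\phi}_{2j'}(\sigma)$, and of $\mathbf{B}^{(0,1)}(x;\sigma)e^{-ik_{2j}x}\mathbf{w}^{(0,1)}_k(x;\sigma)$, and settles vanishing versus non-vanishing by the dichotomy \eqref{piecewise} — precisely your phase-counting argument, organized term by term rather than through a unified phase set. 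One remark: the reality statement $a_{jk}^{(0,2)}(T)\in ie^{ik_4T}\mathbb{R}$ is not argued in the paper's written proof either, but rests on the explicit symbolic (Matlab) computations behind the formulae deemed too bulky to print, so your acknowledgment that this is the delicate part, and your plan to pin it down by reversibility/conjugation symmetry and then confirm by symbolic evaluation, is consistent with — indeed somewhat more explicit than — what the authors actually did.
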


\begin{proof}

When $m=0$ and $n=2$, we rewrite \eqref{def:a>0H} as
\ba \label{eqn:a02H}
a_{jk}^{(0,2)}(T)
=&e^{ik_{2j}T}\Big\langle\int_0^Te^{i(k_{2k}-k_{2j})x}\mathbf{B}^{(0,2)}(x;\sigma)\boldsymbol{\phi}_{2k}(\sigma)~dx,\boldsymbol{\psi}_{2j}(\sigma)\Big\rangle\\
&+e^{ik_{2j}T}\Big\langle\int_0^T\mathbf{B}^{(0,1)}(x;\sigma)e^{-ik_{2j}x}(\mathbf{w}^{(0,1)}_k(x;\sigma)+\sum_{j'=1}^2a_{j'k}^{(0,1)}(x)\boldsymbol{\phi}_{2j'}(\sigma))~dx,\boldsymbol{\psi}_{2j}(\sigma)\Big\rangle,
\ea
where $\mathbf{B}^{(0,1)}(x;\sigma)$ and $\mathbf{B}^{(0,2)}(x;\sigma)$ are in Appendix~\ref{A:Bexp},  $\mathbf{w}^{(0,1)}_k(x;\sigma)$, $k=1,2$, are in Appendix~\ref{A:reduction;high}, and \eqref{eqn:a01H} holds true. 

If \eqref{def:resonance} holds true for some $N\geq 2,\in\mathbb{Z}$ then we deduce from the result of Appendix~\ref{A:Bexp}, \eqref{def:stokes1}, \eqref{def:stokes2} and \eqref{def:phi1-4} that $\mathbf{B}^{(0,2)}(x;\sigma)\boldsymbol{\phi}_{2k}(\sigma)$ depends linearly on 
\begin{gather*}
\sin(2\kappa x), \quad \cos(2\kappa x),\quad \sin(\kappa x), \quad \cos(\kappa x),\\
\sin^2(\kappa x)=\tfrac12(1-\cos(2\kappa x)), \quad \cos^2(\kappa x)=\tfrac12(\cos(2\kappa x)+1),\quad 
\sin(\kappa x)\cos(\kappa x)=\tfrac12\sin(2\kappa x)
\end{gather*}
with respect to $x$. Notice that $k_{2k}-k_{2j}=\pm N\kappa$, $j\neq k$. Therefore for $N\geq3$, we proceed as in the former of \eqref{piecewise}, and 
\[
\int_0^Te^{i(k_{2j}-k_{2k})x}\mathbf{B}^{(0,2)}(x;\sigma)\boldsymbol{\phi}_{2k}(\sigma)~dx=0,
\] 
whereas for $N=2$, we proceed as in the latter of \eqref{piecewise} and the integral does not vanish. 
When $j=k$, so that $k_{2j}-k_{2k}=0$, those terms of $\mathbf{B}^{(0,2)}(x;\sigma)\boldsymbol{\phi}_{2k}(\sigma)$ with a factor of $\sin^2(\kappa x)$ or $\cos^2(\kappa x)$ do not necessarily vanish after the integration over one period for any $N\geq2,\in\mathbb{Z}$. 

It remains to show that if \eqref{def:resonance} holds true for some $N\geq 3,\in\mathbb{Z}$ then the second term on the right side of \eqref{eqn:a02H} vanishes when $j\neq k$. Multiplying \eqref{eqn:a01H} by $e^{-ik_4x}$, we arrive at
\[
e^{-ik_{4}x}a_{11}^{(0,1)}(x)=c_{11,1}e^{i(N+1)\kappa x}+c_{11,2}e^{i(N-1)\kappa x}-(c_{11,1}+c_{11,2})e^{iN\kappa x},
\]
while we deduce from \eqref{eqn:B01} and \eqref{def:stokes1} that $\mathbf{B}^{(0,1)}(x;\sigma)$ depends linearly on $\sin(\kappa x)$ or $\cos(\kappa x)$. 
Therefore we proceed as in the former of \eqref{piecewise}, and ${\displaystyle \int^T_0\mathbf{B}^{(0,1)}(x;\sigma)e^{-ik_{4}x}a_{11}^{(0,1)}(x)~dx=0}$. Likewise,
\[
e^{-ik_4x}a_{21}^{(0,1)}(x)=c_{21,1}e^{i(N+1)\kappa x}+c_{21,2}e^{i(N-1)\kappa x}-(c_{21,1}+c_{21,2}),
\]
and $\mathbf{B}^{(0,1)}(x;\sigma)$ depends linearly on $\sin(\kappa x)$ or $\cos(\kappa x)$. Therefore it becomes zero after the integration over one period. Also we verify that
\[
\int^T_0\mathbf{B}^{(0,1)}(x;\sigma)e^{-ik_{2}x}(a_{12}^{(0,1)}(x)\boldsymbol{\phi}_{2}(\sigma)+a_{22}^{(0,1)}(x)\boldsymbol{\phi}_{4}(\sigma))~dx=0.
\] 
Lastly, since each term of $\mathbf{w}^{(0,1)}_k(x;\sigma)$ is made up of terms each having the factor of $e^{ik_{2k}x}$ (see \eqref{def:w(0,1)1}), 
\[
\int^T_0\mathbf{B}^{(0,1)}(x;\sigma)e^{-ik_{2j}x}\mathbf{w}^{(0,1)}_k(x;\sigma)~dx=0.
\]
This completes the proof. 
\end{proof}

\subsection{Spectral instability indices}\label{sec:ind23}

Let 
\[
\sigma>\sigma_c\quad\text{and}\quad k_2(\sigma)-k_4(\sigma)=N\kappa\quad\text{for some $N\geq2,\in\mathbb{Z}$},
\] 
where $\kappa>0$ is the wave number. 
Let
\[
\text{$\lambda=i\sigma+\delta$, $\delta\in\mathbb{C}$ and $|\delta|\ll1$},\quad
\text{$k=k_j(\sigma)+K\kappa+\gamma$, $j=2,4$, $K\in\mathbb{Z}$, $\gamma\in\mathbb{R}$ and $|\gamma|\ll1$},
\]
$\eps\in\mathbb{R}$ and $|\eps|\ll1$, and putting the result of Lemmas~\ref{coefficients_highn1} and \ref{coefficients_highn2}, we arrive at 
\ba  \label{expanddelta2}
\Delta(i\sigma&+\delta,k_j(\sigma)+K\kappa+\gamma;\eps)\\
=&d^{(2,0,0)}\delta^2+d^{(0,2,0)}\gamma^2+d^{(0,0,4)}\eps^4+d^{(1,1,0)}\delta\gamma+d^{(1,0,2)}\delta\eps^2+d^{(0,1,2)}\gamma\eps^2\\
&+o((|\delta|+|\gamma|+|\eps|^2)^2)
\ea 
as $\delta,\gamma,\eps\to 0$, where 
\ba \label{d_high}
&d^{(2,0,0)}=a_{11}^{(1,0)}a_{22}^{(1,0)},& d^{(0,2,0)}&=-T^2e^{2ik_4(\sigma)T},\\
&d^{(0,0,4)}=\det(\mathbf{a}^{(0,2)}),&d^{(1,1,0)}&=-iTe^{ik_4(\sigma)T}(a_{11}^{(1,0)}+a_{22}^{(1,0)}),\\
&d^{(1,0,2)}=a_{11}^{(0,2)}a_{22}^{(1,0)}+a_{22}^{(0,2)}a_{11}^{(1,0)},\quad&d^{(0,1,2)}&=-iTe^{ik_4(\sigma)T}(a_{11}^{(0,2)}+a_{22}^{(0,2)}),
\ea 
and $a^{(m,n)}_{jk}(T)$, $j,k=1,2$ and $m,n=0,1,2,\dots$ are in the previous subsection. Recall $T=2\pi/\kappa$ and we suppress $T$ for the simplicity of notation. 
The formulae of $a_{jk}^{(0,2)}$, $j,k=1,2$, are too bulky to be displayed here\footnote{It takes $552$kB to save the formulae of $a_{jk}^{(0,2)}$ while it takes about $17$kB for $b^{0,1}_{1,6}$ and $b^{(0,1)}_{1,7}$ (see, for instance, Appendix \ref{A:reduction;high}).}. Recall from Lemma~\ref{coefficients_highn1} that $d^{(2,0,0)}\neq 0$. 
We pause to remark that if $k_2(\sigma)-k_4(\sigma)\neq N\kappa$ for any $N\geq2,\in\mathbb{Z}$ then 
\[
\Delta(i\sigma+\delta,k_j(\sigma)+K\kappa+\gamma;\eps)=O(|\delta|+|\gamma|+|\eps|^2)
\quad\text{as $\delta,\gamma,\eps\to0$}
\]
instead.

The Weierstrass preparation theorem asserts that \eqref{expanddelta2} becomes 
\[
\Delta(i\sigma+\delta,k_j(\sigma)+K\kappa+\gamma;\eps)=W(\delta,\gamma,\eps)h(\delta,\gamma,\eps)
\]
for $|\delta|, |\gamma|, |\eps|\ll 1$, where $W(\delta,\gamma,\eps)$ is a Weierstrass polynomial and, for $|\delta|\ll1$,
\ba
\label{weierstrass_high}
W(\delta,\gamma,\eps)=\delta^2&+\frac{d^{(1,1,0)}\gamma+d^{(1,0,2)}\eps^2+o(|\gamma|+|\eps|^2)}{d^{(2,0,0)}}\delta\\
&+\frac{d^{(0,2,0)}\gamma^2+d^{(0,1,2)}\gamma\eps^2+d^{(0,0,4)}\eps^4+o((|\gamma|+|\eps|^2)^2)}{d^{(2,0,0)}}
\ea
as $\gamma,\eps\to0$, $h(\delta,\gamma,\eps)$ is analytic at $(0,0,0)$ and $h(0,0,0)=d^{(2,0,0)}\neq0$. Therefore the zeros of $\Delta(i\sigma+\delta,k_j(\sigma)+p\kappa+\gamma;\eps)$ for $(\delta,\gamma,\eps)$ in the vicinity of $(0,0,0)$ are the two zeros of $W(\delta,\gamma,\eps)$. Indeed,
\ba  \label{delta_high}
\delta(\gamma,\eps)=&-\frac{d^{(1,1,0)}\gamma+d^{(1,0,2)}\eps^2+o(|\gamma|+|\eps|^2)}{2d^{(2,0,0)}}\\
&\pm\Big(\frac{(d^{(1,1,0)})^2-4d^{(2,0,0)}d^{(0,2,0)}}{4(d^{(2,0,0)})^2}\gamma^2+\frac{d^{(1,1,0)}d^{(1,0,2)}-2d^{(2,0,0)}d^{(0,1,2)}}{2(d^{(2,0,0)})^2}\gamma\eps^2 \\
&\qquad +\frac{(d^{(1,0,2)})^2-4d^{(2,0,0)}d^{(0,0,4)}}{4(d^{(2,0,0)})^2}\eps^4+o((|\gamma|+|\eps|^2)^2)\Big)^{1/2} \\
=:&\alpha^{(1,0)}\gamma+\alpha^{(0,2)}\eps^2+o(|\gamma|+|\eps|^2)
\pm\sqrt{\alpha^{(2,0)}\gamma^2+\alpha^{(1,2)}\gamma\eps^2+\alpha^{(0,4)}\eps^4+o((|\gamma|+|\eps|^2)^2)} \\
=:&\alpha^{(1,0)}\gamma+\alpha^{(0,2)}\eps^2+o(|\gamma|+|\eps|^2)
\pm\sqrt{Q(\gamma;\eps)+o((|\gamma|+|\eps|^2)^2)} 
\ea 
as $\gamma,\eps\to0$.


\begin{theorem}[Spectral stability and instability away from $0\in\mathbb{C}$]\label{thm:unstable2}
A $2\pi/\kappa$ periodic Stokes wave of sufficiently small amplitude in water of unit depth is spectrally unstable near $i\sigma\in \mathbb{C}$, $\sigma\in\mathbb{R}$, for which $k_2(\sigma)-k_4(\sigma)=2\kappa$, provided that 
\be \label{def:ind2}
{\rm ind}_2(\kappa):=\frac{a_{12}^{(0,2)}a_{21}^{(0,2)}}{a_{11}^{(1,0)}a_{22}^{(1,0)}}(T)>0,
\ee 
where $a^{(m,n)}_{jk}(T)$ is in \eqref{eqn:a00H} and \eqref{def:a>0H}, and $T=2\pi/\kappa$. It is spectrally stable at the order of $\eps^2$ as $\eps\to0$ otherwise, where $\eps$ is the dimensionless amplitude parameter. It is spectrally stable near $i\sigma\in \mathbb{C}$, $\sigma\in\mathbb{R}$, for which $k_2(\sigma)-k_4(\sigma)=N\kappa$ for $N\geq3,\in\mathbb{Z}$ at the order of $\eps^2$ as $\eps\to0$.
\end{theorem}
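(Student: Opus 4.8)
The plan is to locate the spectrum from the two roots $\delta(\gamma,\eps)$ of the quadratic Weierstrass polynomial $W$ in \eqref{weierstrass_high}, displayed in \eqref{delta_high}, and to show that $\delta$ departs from the imaginary axis exactly when a genuinely real discriminant turns positive. Since $\lambda=i\sigma+\delta$, spectral instability near $i\sigma$ means finding $\gamma\in\mathbb{R}$ and small $\eps$ with ${\rm Re}\,\delta\neq0$; by the spectral symmetry of Lemma~\ref{lem:symm}, any such $\delta$ places spectrum in the right half plane.

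First I would exploit the reality/imaginary structure recorded in Lemmas~\ref{coefficients_highn1} and \ref{coefficients_highn2}. Factoring the common phase out of each entry, write $a_{jj}^{(1,0)}=e^{ik_4T}r_j$ with $r_j\in\mathbb{R}\setminus\{0\}$ and $a_{jk}^{(0,2)}=ie^{ik_4T}q_{jk}$ with $q_{jk}\in\mathbb{R}$. Inserting these into \eqref{d_high} and dividing the coefficients of $W=\delta^2+P\delta+R$ by $d^{(2,0,0)}=e^{2ik_4T}r_1r_2$, the coefficient $P$ of $\delta$ becomes purely imaginary while the constant term $R$ becomes real. Hence $P^2-4R$ is real, the outer term $\alpha^{(1,0)}\gamma+\alpha^{(0,2)}\eps^2$ in \eqref{delta_high} is purely imaginary, and therefore, to leading order,
\[
{\rm Re}\,\delta(\gamma,\eps)=\pm{\rm Re}\sqrt{Q(\gamma;\eps)},
\]
with $Q$ the \emph{real} quadratic form $\alpha^{(2,0)}\gamma^2+\alpha^{(1,2)}\gamma\eps^2+\alpha^{(0,4)}\eps^4$. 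Thus instability at order $\eps^2$ is equivalent to $Q$ taking a positive value for some real $\gamma$.

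It then remains to analyze $Q$ as a binary quadratic form in $(\gamma,\eps^2)$. A short calculation from \eqref{d_high} gives $\alpha^{(2,0)}=-T^2(r_1-r_2)^2/(4r_1^2r_2^2)\leq0$, so for fixed $\eps$ the parabola $\gamma\mapsto Q$ opens downward and attains a positive value if and only if its discriminant $(\alpha^{(1,2)})^2-4\alpha^{(2,0)}\alpha^{(0,4)}$ is positive. I expect the decisive identity to be
\be
(\alpha^{(1,2)})^2-4\alpha^{(2,0)}\alpha^{(0,4)}=\frac{T^2(r_1-r_2)^2}{r_1^2r_2^2}\,{\rm ind}_2(\kappa),
\ee
in which the squares $(q_{11}r_2-q_{22}r_1)^2$ cancel and only the off-diagonal product $-4r_1r_2q_{12}q_{21}$ survives, reproducing ${\rm ind}_2$ of \eqref{def:ind2} up to the positive factor $T^2(r_1-r_2)^2/(r_1^2r_2^2)$ (here $r_1\neq r_2$ because $k_2$ and $k_4$ give distinct coefficients in \eqref{def:a10H}). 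This yields the dichotomy: ${\rm ind}_2>0$ makes $Q>0$ on a $\gamma$-interval of width $O(\eps^2)$, producing a ``bubble'' of unstable spectrum, whereas ${\rm ind}_2\leq0$ makes $Q$ negative (semi)definite, so ${\rm Re}\,\delta=0$ at order $\eps^2$. When $k_2-k_4=N\kappa$ with $N\geq3$, Lemma~\ref{coefficients_highn2} forces $a_{12}^{(0,2)}=a_{21}^{(0,2)}=0$, so $q_{12}=q_{21}=0$, the discriminant vanishes, $Q$ is negative semidefinite, and stability at order $\eps^2$ follows.

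The main obstacle is twofold. Conceptually, the crux is the reality step: after removing the phase $e^{ik_4T}$, the $\delta$-linear coefficient is purely imaginary and the constant term is real, so the spectrum can leave $i\mathbb{R}$ only through a sign change of a real discriminant. This coordinated reality is the Evans-function manifestation of the Hamiltonian/Krein-signature mechanism underlying the MacKay--Saffman condition \eqref{def:resonance}, and it must be extracted carefully from Lemmas~\ref{coefficients_highn1} and \ref{coefficients_highn2}. Computationally, the obstacle is the evaluation of $a_{jk}^{(0,2)}(T)$: these demand the full second-order center-manifold reduction \eqref{eqn:red0} together with the resonant integrals \eqref{piecewise}, producing expressions long enough to require symbolic computation, after which the discriminant identity above must be confirmed.
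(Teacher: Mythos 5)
Your proposal is correct and follows essentially the same route as the paper: both locate the spectrum through the roots \eqref{delta_high} of the Weierstrass polynomial \eqref{weierstrass_high}, invoke the reality structure of Lemmas~\ref{coefficients_highn1} and \ref{coefficients_highn2} together with $\alpha^{(2,0)}<0$ to reduce instability at order $\eps^2$ to positivity of the downward parabola $Q$, and dispose of the $N\geq3$ case by Lemma~\ref{coefficients_highn2}. Your discriminant identity is exactly equivalent to the paper's statement that $Q$ attains its maximum ${\rm ind}_2(\kappa)\eps^4$ at $\gamma=-\alpha^{(1,2)}\eps^2/(2\alpha^{(2,0)})$, since $Q_{\max}=-\big[(\alpha^{(1,2)})^2-4\alpha^{(2,0)}\alpha^{(0,4)}\big]\eps^4/(4\alpha^{(2,0)})$ and the positive prefactor $T^2(r_1-r_2)^2/(r_1^2r_2^2)$ you identify is precisely $-4\alpha^{(2,0)}$.
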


\begin{proof}
Recall from \eqref{d_high} and Lemmas~\ref{coefficients_highn1} and \ref{coefficients_highn2} that 
\[
\text{$\alpha^{(1,0)}$ and $\alpha^{(0,2)}$ are purely imaginary}
\] and 
\[
\text{$\alpha^{(2,0)}$, $\alpha^{(1,2)}$ and $\alpha^{(0,4)}$ are real}.
\] 
Moreover since
\[
\alpha^{(2,0)}=-\frac{T^2e^{2ik_4T}(a^{(1,0)}_{11}-a^{(1,0)}_{22})^2}{4(a^{(1,0)}_{11}a^{(1,0)}_{22})^2}<0
\]  
for $|\eps|\ll1$, $Q(\gamma)$ (see \eqref{delta_high}) takes its maximum $\mathrm{ind}_2(\kappa)\eps^4$ at ${\displaystyle \gamma=-\frac{\alpha^{(1,2)}}{2\alpha^{(2,0)}}\eps^2}$. 

If $k_2(\sigma)-k_4(\sigma)=N\kappa$ for some $N\geq 3,\in \mathbb{Z}$ then we infer from Lemma~\ref{coefficients_highn2} that $\mathrm{ind}_2(\kappa)=0$. This completes the proof. 
\end{proof}

\begin{remark*}\rm
To investigate stability and instability at the order higher than $\eps^2$, one has to expand the periodic Evans function to higher order and study the spectrum at 
\[
\gamma=-\frac{\alpha^{(1,2)}}{2\alpha^{(2,0)}}\eps^2=\frac{ie^{-ik_4T}}{T}\frac{a_{11}^{(0,2)}a_{22}^{(1,0)} - a_{11}^{(1,0)}a_{22}^{(0,2)}}{a_{11}^{(1,0)} - a_{22}^{(1,0)}}\eps^2.
\] 
\end{remark*}

\begin{remark*}\rm
McLean \cite{McLean;finite-depth} (see also \cite{McLean;infinite1,McLean;infinite2} for the infinite depth) numerically investigated the spectral stability and instability of a Stokes wave, for a range of $\eps$ for $\kappa$ smaller and greater than $\kappa_1$ (permitting transversal perturbations), and reported instability in the vicinity of $i\sigma\in\mathbb{C}$, $\sigma\in\mathbb{R}$, whenever $k_2(\sigma)-k_4(\sigma)=N\kappa$ for some $N\geq2,\in\mathbb{Z}$. See, for instance, \cite[Figures~2,3,4]{McLean;finite-depth}. See also \cite{FK, DO}, among others, for further numerical investigations. Theorem~\ref{thm:unstable2} explains the numerical finding for $N=2$. 
\end{remark*}

\begin{remark*}\rm
If we proceed as in Section~\ref{sec:ind1} instead, and let
\[
\lambda_{2j}(k_{2j}(\sigma)+\gamma,\eps)=i\sigma+
\alpha^{(1,0)}_{2j}\gamma+\alpha^{(0,2)}_{2j}\eps^2+o(|\gamma|+|\eps|^2),\quad j=1,2,
\]
as $\gamma, \eps\to0$, for some $\alpha^{(1,0)}_{2j}$ and $\alpha^{(0,2)}_{2j}$, then we arrive at
\[
{\rm ind}_3(\kappa)=\Big(\frac{(a_{11}^{(0,2)}a_{22}^{(1,0)}-a_{11}^{(1,0)}a_{22}^{(0,2)})^2
+4a_{12}^{(0,2)}a_{21}^{(0,2)}a_{11}^{(1,0)}a_{22}^{(1,0)}}{(a_{11}^{(1,0)}a_{22}^{(1,0)})^2}\Big)(T)>0,
\]
rather than \eqref{def:ind2}, which holds true if $\kappa_L<\kappa<\kappa_R$, where $\kappa_L\approx 0.86430$ and $\kappa_R\approx 1.00804$. 
\end{remark*}

\begin{figure}[htbp]
\begin{center}
\includegraphics[scale=0.38]{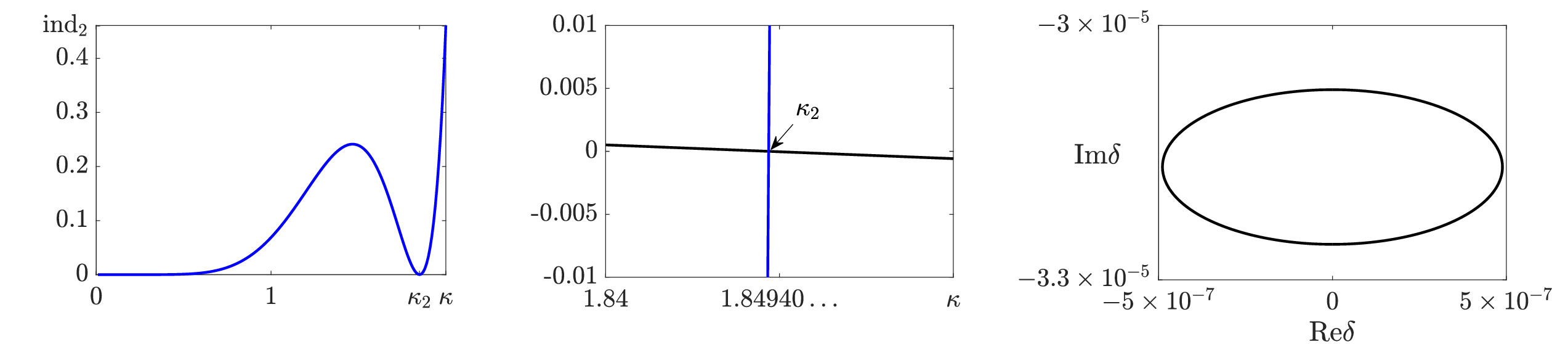}
\caption{Left: A plot of ${\rm ind}_2$ versus $\kappa$; ${\rm ind}_2(\kappa)>0$ unless $\kappa=\kappa_2$. Middle: A plot of the imaginary parts of  $\frac{a_{12}^{(0,2)}}{a_{11}^{(1,0)}}$ (black) and $\frac{a_{21}^{(0,2)}}{a_{22}^{(1,0)}}$ (blue) versus $\kappa$; both change their signs at $\kappa=\kappa_2$. Right: A plot of the leading part of $\delta(\gamma,\eps)$ for $\kappa=1.5$, $\eps=0.001$ and $\gamma$ in the interval bounded by the two zeros of $Q(\gamma)$, showing a ``bubble" (an ellipse) of unstable spectrum.}
\label{high_instability}
\end{center}
\end{figure}

Numerical evaluation reveals that ${\rm ind}_2(\kappa)>0$ unless $\kappa= \kappa_2:=1.849404083750\dots$ and ${\rm ind}_2(\kappa_2)=0$. See the left panel of Figure~\ref{high_instability}. Recall from Lemmas~\ref{coefficients_highn1} and \ref{coefficients_highn2} that $\frac{a_{12}^{(0,2)}}{a_{11}^{(1,0)}}$ and $\frac{a_{21}^{(0,2)}}{a_{22}^{(1,0)}}$ are purely imaginary. Numerical evaluation reveals that $\operatorname{Im}\Big(\frac{a_{12}^{(0,2)}}{a_{11}^{(1,0)}}\Big)$ and $\operatorname{Im}\Big(\frac{a_{21}^{(0,2)}}{a_{22}^{(1,0)}}\Big)$ change their signs at $\kappa=\kappa_2$. 
See the middle panel of Figure~\ref{high_instability}. 

The right panel of Figure~\ref{high_instability} provides an example of the leading part of \eqref{delta_high}, showing a ``bubble" (an ellipse) of unstable spectrum whose center ``drifts" from the origin by a distance of order $\eps^2$. Here the origin means $\lambda=i\sigma$ where \eqref{def:resonance} holds for $N=2$. Our result agrees with that from a formal perturbation method of \cite{creedon2021highfrequency}.

Therefore we conjecture that a $2\pi/\kappa$ periodic Stokes wave of sufficiently small amplitude in water of unit depth is spectrally unstable near $i\sigma\in\mathbb{C}$ , for which $k_2(\sigma)-k_4(\sigma)=2\kappa$, unless $\kappa=\kappa_2$, where $\kappa_2$ is the unique zero of \eqref{def:ind2} and $1.84940<\kappa_2<1.84941$. Recall from Corollary~\ref{cor:BF} that the Benjamin--Feir instability takes place for $\kappa>\kappa_1$ and $1.3627827<\kappa_1<1.3672828$. Therefore we conjecture that {\em all} Stokes waves of sufficiently small amplitude are spectrally unstable.

We wish to verify the numerical findings by means of rigorous analysis and validated numerics, as we do for Corollary~\ref{cor:BF}, whereby giving a computer-assisted proof of the conjecture. Specifically we wish to verify that $\operatorname{ind}_2(\kappa)>0$ for $0<\kappa\ll1$ and, also, $\kappa\gg1$. This seems unwieldy, though, because \eqref{def:ind2} involves an excessive number of symbolic expressions. Moreover the symbolic expressions depend on $\kappa$ and, also, $\sigma$ and $k_2(\sigma)$, $k_4(\sigma)$, subject to $k_2(\sigma)-k_4(\sigma)=2\kappa$. It would be helpful if one could simplify the symbolic expressions using 
\[
k_2(\sigma)-k_4(\sigma)=2\kappa\quad\text{and}\quad 
k_2(\sigma)-\sqrt{\mu_0 k_2(\sigma)\tanh(k_2(\sigma))}=k_4(\sigma)+\sqrt{\mu_0 k_4\tanh(k_4(\sigma))}=\sigma.
\]
This is a subject of future investigation.

Also we wish to establish a rigorous enclosure of \eqref{def:ind2} for finite intervals. We apply a Newton--Kantorovich theorem (see, for instance, \cite{Castelli2018,newton2,CHURCH2022133072} for applications to computer-assisted proofs) to
\be 
\label{F_func}
F(k_4,\sigma;\kappa):=((k_4+2\kappa)-\sqrt{\mu_0 (k_4+2\kappa)\tanh((k_4+2\kappa))}-\sigma,k_4+\sqrt{\mu_0 k_4\tanh(k_4)}-\sigma)
\ee
and rigorously enclose $k_2$, $k_4$ and $\sigma$ as a function of $\kappa$, suitably represented by an interval. See Appendix~\ref{rigor_enclosure} for details. Our INTLAB scripts require that the intervals of $\kappa$ be sufficiently small and, otherwise, suffer from huge wrapping effects because \eqref{def:ind2} involves an excessive number of operations of symbolic expressions. Also carrying out interval arithmetic computations of all the operations is a considerable amount of work. Nevertheless, rigorously validating the sign of \eqref{def:ind2} for sufficiently small intervals of $\kappa$ is possible. For instance, we make an INTLAB computation to learn that 
\begin{align*}
&{\rm Im}\left(\frac{a_{12}^{(0,2)}}{a_{11}^{(1,0)}}({\rm intval(1.84940)})\right)=[   0.22290970792399\times 10^{-6},   0.22302918033170\times 10^{-6}], \\
&{\rm Im}\left(\frac{a_{12}^{(0,2)}}{a_{11}^{(1,0)}}({\rm intval(1.84941)})\right)=[  -0.32307477032718\times 10^{-6},  -0.32297173040054\times 10^{-6}],\\
&{\rm Im}\left(\frac{a_{21}^{(0,2)}}{a_{22}^{(1,0)}}({\rm intval(1.84940)})\right)=[-0.00151096685232,  -0.00005120430605],\\
&{\rm Im}\left(\frac{a_{21}^{(0,2)}}{a_{22}^{(1,0)}}({\rm intval(1.84941)})\right)=[0.00049975168425,   0.00176360845785], 
\end{align*}
so that 
\begin{align*}
&{\rm Im}\left(\frac{a_{12}^{(0,2)}}{a_{11}^{(1,0)}}(1.84940)\right)>0,\quad
&&{\rm Im}\left(\frac{a_{12}^{(0,2)}}{a_{11}^{(1,0)}}(1.84941)\right)<0, \\
&{\rm Im}\left(\frac{a_{21}^{(0,2)}}{a_{22}^{(1,0)}}(1.84940)\right)<0, \quad
&&{\rm Im}\left(\frac{a_{21}^{(0,2)}}{a_{22}^{(1,0)}}(1.84941)\right)>0.
\end{align*}
Since ${\rm Im}\Big(\frac{a_{12}^{(0,2)}}{a_{11}^{(1,0)}}\Big)$ and ${\rm Im}\Big(\frac{a_{21}^{(0,2)}}{a_{22}^{(1,0)}}\Big)$ are continuous functions of $\kappa$, it follows from \eqref{def:ind2} that $\operatorname{ind}_2(\kappa)$ has a zero over the interval $(1.84940,1.84941)$.

\begin{corollary}\label{cor:high-freq}
There are Stokes wave of sufficiently small amplitude that are spectrally unstable away from $0\in\mathbb{C}$ but they are insusceptible to the Benjamin--Feir instability near $0\in\mathbb{C}$.
\end{corollary}

\begin{proof}
Recall from Corollary~\ref{cor:BF} that a $2\pi/\kappa$ periodic Stokes wave of sufficiently small amplitude in water of unit depth is Benjamin--Feir unstable if $\kappa>\kappa_1\in(1.3627827,1.3627828)$. We make an INTLAB computation to learn that
\[
{\rm ind}_2({\rm midrad(1.3,10^{-12})})=[   0.19949873745408,   0.20008799694687],
\]
where ${\rm midrad(1.3,10^{-12})}$ is an interval rigorously enclosing $[1.3-10^{-12},1.3+10^{-12}]$ and the right side rigorously encloses the range of $\operatorname{ind}_2$ for $\kappa \in [1.3-10^{-12},1.3+10^{-12}]$. Therefore ${\rm ind}_2(\kappa)>0$ for $\kappa\in[1.3-10^{-12},1.3+10^{-12}]$. This completes the proof.
\end{proof}

\begin{remark*}
In view of Corollary~\ref{cor:BF}, in order to prove that {\em all} Stokes waves of sufficiently small amplitude are spectrally unstable, one must demonstrate that $\operatorname{ind}_2(\kappa)>0$ for $\kappa>0$ sufficiently small, say, for $\kappa\in (0,\kappa_0]$ and rigorously compute that $\operatorname{ind}_2(\kappa)>0$ for finitely many subintervals of sufficiently small length which together cover, say, $[\kappa_0, 1.37]$.
\end{remark*}

\noindent{\bf Data Availibility Statement.}~Data sharing is not applicable to this article as no datasets were generated or analysed during the current study.

\newpage

\begin{appendix}

\section{Stokes expansion}\label{A:stokes}

Substituting \eqref{eqn:stokes exp} into \eqref{eqn:stokes} and \eqref{eqn:u;stokes}, at the order of $\eps^2$, we gather
\begin{equation}\label{eqn:stokes2}
\begin{aligned}
&{\phi_2}_{xx}+{\phi _{2}}_{yy}=2\eta_1{\phi_1}_{yy}+y{\eta_1}_{xx}{\phi_1}_y+2y{\eta_1}_x{\phi_1}_{xy}&&\text{for $0<y<1$}, \\
&{\phi_2}_y=0&&\text{at $y=0$}, \\
&{\eta_2}_x+{\phi_2}_y={\eta_1}_x{\phi_1}_x+\eta_1{\phi_1}_y &&\text{at $y=1$},\\
&\bar{\phi}_{2}+{\phi_2}_x-\mu_0\eta_2={\eta_1}_x{\phi_1}_y+\frac12({\phi_1}_x^2+{\phi_1}_y^2)+\mu_1\eta_1&&\text{at $y=1$},
\end{aligned}
\end{equation}
where $\phi_1,\eta_1$ and $\mu_0$ are in \eqref{def:stokes1} and \eqref{def:mu0}. Recall that $\phi_2$ and $\eta_2$ are $2\pi/\kappa$ periodic functions of $x$, $\phi_2$ is an odd function of $x$, $\eta_2$ is an even function and of mean zero over one period, and $\bar{\phi}_2$ and $\mu_1$ are constants. We solve \eqref{eqn:stokes2}, for instance, by the method of undetermined coefficients, to obtain
\ba\label{def:stokes2}
\bar{\phi}_{2}=&\frac{\mu_0^2}{4}\tanh(\kappa)^2,\\
\phi_2(x,y)=&\frac{3\mu _{0}\sin(2\kappa x)\cosh(2\kappa y)}{8\sinh(\kappa)\cosh(\kappa)}
+\frac{\mu _{0}\sinh(\kappa)^2}{2\cosh(\kappa)}y\sin(2\kappa x)\sinh(\kappa y),\\
\eta_2(x)=&\frac{\mu _{0}}{4}(2\sinh(\kappa)^2+3)\cos(2\kappa x),
\ea
and $\mu_1=0$. 

To proceed, at the order of $\eps^3$, we gather
\begin{align*}
&\begin{aligned}{\phi_3}_{xx}+{\phi_3}_{yy}
=&2y{\eta_1}_x{\phi_2}_{xy}+2\eta_1{\phi_2}_{yy}+y{\eta_1}_{xx}{\phi_2}_y\\
&+2y({\eta_2}_x-\eta_1{\eta_1}_x){\phi_1}_{xy}+(2\eta_2-3\eta_1^2-y^2{\eta_1}_x^2){\phi_1}_{yy}+y({\eta_2}_{xx}-2{\eta_1}_x^2-\eta_1{\eta_1}_{xx}){\phi_1}_{y}\end{aligned}
\intertext{for $0<y<1$, ${\phi_3}_y=0$ at $y=0$, and }
&\hspace*{-5pt}\left\{\begin{aligned}
&{\eta_3}_x+{\phi_3}_y={\eta_1}_x({\phi_2}_x+\bar{\phi}_{2})+\eta_1{\phi_2}_y+{\eta_2}_x{\phi_1}_x+(\eta_2-{\eta_1}_x^2-\eta_1^2){\phi_1}_y\\
&\begin{aligned}\bar{\phi}_{3}+{\phi_3}_x-\mu_0\eta_3=&{\phi_1}_x({\phi_2}_x+\bar{\phi}_{2})+{\eta_1}_x{\phi_2}_y+{\phi_1}_y({\phi_2}_y+{\eta_2}_x)\\
&-(\eta_1{\eta_1}_x+{\phi_1}_x{\eta_1}_x){\phi_1}_y-\eta_1{\phi_1}_y^2+\mu_2\eta_1\end{aligned}
\end{aligned}\right.
\end{align*}
at $y=1$, where $\phi_1,\eta_1,\mu_0$ are in \eqref{def:stokes1} and \eqref{def:mu0}, $\bar{\phi}_2,\phi_2,\eta_2$ are in \eqref{def:stokes2}. We likewise solve this by the method of undetermined coefficients to obtain $\bar{\phi}_3=0$,
\begin{subequations}\label{def:stokes3}
\begin{align}
&\begin{aligned}
\phi_3(x,y)=&-\frac{\mu_0^2(4s^2-9)}{16s(2)^2}\sin(3\kappa x)\cosh(3\kappa y) 
+\frac{3\mu_0^2s}{8(s^2+1)}y \sin(3\kappa x)\sinh(2\kappa y)\\
&+\frac{\mu_0^2s(2 s^2 + 3)}{8c}y\sin(3\kappa x)\sinh(\kappa y)
+\frac{\mu_0^2s^4}{8(s^2+1)} y^2\sin(3\kappa x)\cosh(\kappa y)\\
&+\frac{3\mu_0^2s}{8(s^2+1)}y\sin(\kappa x)\sinh(2\kappa y)
-\frac{\mu_0^2s(2s^2+3)}{8c}y\sin(\kappa x)\sinh(\kappa y) \\
&+\frac{\mu_0^2s^4}{8(s^2+1)}y^2\sin(\kappa x) \cosh(\kappa y),
\end{aligned}\\
&\hspace*{15pt}\begin{aligned}
\eta_3(x)=&\frac{\mu_0^2(24s^6+72s^4+72s^2+27)}{64(s^3+s)}\cos(3\kappa x)
+\frac{\mu_0^2s(5s^4+13s^2+6)}{8(s^2+1)}\cos(\kappa x),
\end{aligned}
\intertext{and}
&\hspace*{30pt}\mu_2=-\frac{\mu_0^3(8s^4+12s^2+9)}{8(s^2+1)},
\end{align}
\end{subequations}
where
\[
c=\cosh(\kappa),\quad s=\sinh(\kappa)\quad\text{and}\quad s(2)=\sinh(2\kappa).
\]

We do not include the formulae of $\bar{\phi}_4,\phi_4,\bar{\phi}_5,\phi_5,\dots$, $\eta_4,\eta_5,\dots$, $\mu_3,\mu_4,\dots$.

\section{Proof of Lemma~\ref{lem:symm}}\label{A:symm}

Let $\lambda\in{\rm spec}(\mathcal{L}(\eps))$, and suppose that $\mathbf{u}=\begin{pmatrix} \phi \\ u \\ \eta \end{pmatrix}\in L^\infty(\mathbb{R};Y)$, by abuse of notation, is a nontrivial solution of \eqref{eqn:spec} satisfying $\mathbf{u}(x+T)=e^{ik T}\mathbf{u}(x)$ for all $x\in\mathbb{R}$ for some $k\in\mathbb{R}$. Notice that \eqref{eqn:spec} remains invariant under
\[
\lambda\mapsto \lambda^*\quad\text{and}\quad \mathbf{u}\mapsto \mathbf{u}^*,
\]
and $\mathbf{u}^*(x+T)=e^{-ik T}\mathbf{u}^*(x)$ for all $x\in\mathbb{R}$, where the asterisk denotes complex conjugation. Thus $\lambda^*\in{\rm spec}(\mathcal{L}(\eps))$. Also notice that \eqref{eqn:spec} remains invariant under
\[
\lambda\mapsto -\lambda\quad\text{and}\quad
\begin{pmatrix} \phi(x) \\ u(x) \\ \eta(x) \end{pmatrix}\mapsto
\begin{pmatrix} -\phi(-x) \\ u(-x) \\ \eta(-x) \end{pmatrix}=:\mathbf{u}_-(x),
\]
and $\mathbf{u}_-(x+T)=e^{-ik T}\mathbf{u}_-(x)$ for all $x\in\mathbb{R}$. Thus $-\lambda\in{\rm spec}(\mathcal{L}(\eps))$. This completes the proof.

\section{Proof of Lemma~\ref{lem:eps=0}}\label{A:eps=0}

The proof of \eqref{def:phi12}, \eqref{def:phi34} and \eqref{def:phi1-4} is rudimentary. When $\sigma=\sigma_c$, clearly,
\[
(\mathbf{L}(i\sigma_c)-ik_c)\boldsymbol{\phi}_3(\sigma_c)=(\mathbf{L}(i\sigma_c)-ik_c)\begin{pmatrix} \mu_0\cosh(k_cy) \\ i(k_c-\sigma_c)\cosh(k_cy)\\ i(k_c-\sigma_c)\cosh(k_c)\end{pmatrix}=\mathbf{0}.
\]
It remains to solve
\[
(\mathbf{L}(i\sigma_c)-ik_c)\mathbf{u}
:=(\mathbf{L}(i\sigma_c)-ik_c)\begin{pmatrix} \phi \\ \upsilon \\ \eta \end{pmatrix}
=\begin{pmatrix} \mu_0\cosh(k_cy) \\ i(k_c-\sigma_c)\cosh(k_cy)\\ i(k_c-\sigma_c)\cosh(k_c)\end{pmatrix},
\]
where $\mathbf{u}\in {\rm dom}(\mathbf{L})$.
Solving the first and second equations, subject to $\phi_y(0)=0$ (see \eqref{def:dom}), we obtain
\[
\phi(y)=c\mu_0\cosh(k_cy)-i\mu_0y\sinh(k_cy)\quad\text{and}\quad
\upsilon(y)=(1+ic(k_c-\sigma_c))\cosh(k_cy)+(k_c-\sigma_c)y\sinh(k_cy)\]
for some constant $c$, whence $\eta=(1+ic(k_c-\sigma_c))\cosh(k_c)+(k_c-\sigma_c)\sinh(k_c)$ by \eqref{def:dom}. Substituting into the third equation, we arrive at
\[
c((\sigma-k_c)^2-\mu_0k_c\tanh(k_c))+i((\mu_0-(\sigma-k_c)^2)\tanh(k_c)+2\sigma - 2k_c + \mu_0k_c)=0.
\]
In view of \eqref{eqn:sigma}, this is solvable, provided that the second term on the left side vanishes. Differentiating \eqref{eqn:sigma} and evaluating $k=k_c$, we learn that
\[
2\sigma_+(k_c)-2k_c+\mu_0k_c=\mu_0k_c\tanh(k_c)^2-\mu_0\tanh(k_c),
\]
whence
$$
\begin{aligned}
&(\mu_0-(\sigma_+(k_c)-k_c)^2)\tanh(k_c)+2\sigma_+(k_c)- 2k_c+ \mu_0k_c\\
&=(\mu_0-(\sigma_+(k_c)-k_c)^2)\tanh(k_c)+\mu_0k_c\tanh(k_c)^2-\mu_0\tanh(k_c)=0.
\end{aligned}
$$
Therefore $c$ is arbitrary. Also we verify that
\[
\ker((\mathbf{L}(i\sigma_c)-ik_c\mathbf{1})^3)=\ker((\mathbf{L}(i\sigma_c)-ik_c\mathbf{1})^2),
\]
where $\mathbf{1}$ is the identity operator. This completes the proof.

\newpage

\section{Expansion of \texorpdfstring{$\mathbf{B}(x;\sigma,\delta,\epsilon)$}{Lg}}\label{A:Bexp}

Throughout the section, let $\mathbf{u}=\begin{pmatrix} \phi\\ \upsilon \\ \eta\end{pmatrix}$. Recalling \eqref{def:B;exp0} and \eqref{def:B;expH}, we use \eqref{eqn:spec}, \eqref{def:tildeu}, \eqref{def:u(tildeu)}, \eqref{def:u(tildeu)_y}, \eqref{def:u_x}, \eqref{def:L}, \eqref{def:stokes1}, \eqref{def:stokes2}, \eqref{def:stokes3}, and make straightforward calculations to show that
\be \label{eqn:B10B20}
\mathbf{B}^{(1,0)}(x;\sigma)\mathbf{u}=\begin{pmatrix}\phi\\-\upsilon-2i\sigma\mu_0^{-1}\phi \\ \eta\end{pmatrix}
\quad\text{and}\quad
\mathbf{B}^{(2,0)}(x;\sigma)\mathbf{u}=\begin{pmatrix}0\\-\mu_0^{-1}\phi\\0\end{pmatrix}.
\ee
Notice that $\mathbf{B}^{(m,0)}$, $m\geq1$, do not involve $x$. 

Let $\mathbf{B}^{(0,1)}(x;\sigma)\mathbf{u}=\begin{pmatrix}
(\mathbf{B}^{(0,1)}(x;\sigma)\mathbf{u})_1\\
(\mathbf{B}^{(0,1)}(x;\sigma)\mathbf{u})_2 \\
(\mathbf{B}^{(0,1)}(x;\sigma)\mathbf{u})_3\end{pmatrix}$, and we make straightforward calculations to show that
\begin{align}
&\begin{aligned}\label{eqn:B01}
(\mathbf{B}^{(0,1)}(x;\sigma)\mathbf{u})_1
=&i\sigma{\phi_1}_x(1)\phi+(y{\eta_1}_x+{\phi_1}_y(1))\phi_y-y{\phi_1}_y\phi_y(1)\\
&+(\mu_0{\phi_1}_x(1)-i\sigma{\phi_1}_y(1))\upsilon+i\sigma y{\phi_1}_y\eta,\\
(\mathbf{B}^{(0,1)}(x;\sigma)\mathbf{u})_2
=&\left(i\sigma^3 {\phi_1}_y(1)+ \mu_0 \sigma^2 {\phi_1}_x(1) - i\mu_0 \sigma {\phi_1}_{xx}(1) \right)\mu_0^{-2}\phi-\left({\phi_1}_{xy}(1)+ 2i \sigma {\phi_1}_y(1)\right)\mu_0^{-1}\phi_y\\
&+\left(\mu_0 {\phi_1}_x(1) + 2 \mu_0 \eta_1 - i\sigma {\phi_1}_y(1) \right)\mu_0^{-2}\phi_{yy}+\left(i\sigma y {\phi_1}_y  - y {\phi_1}_{xy}\right)\mu_0^{-1}\phi_y(1)\\
&+\left(i\sigma {\phi_1}_{xy}(1)  - \mu_0 {\phi_1}_{xx}(1) - i\mu_0 \sigma {\phi_1}_x(1)\right)\mu_0^{-1}\upsilon
+\left(y {\eta_1}_x - {\phi_1}_y(1)\right)\upsilon_y \\
&+\left(2 {\phi_1}_{yy} + \sigma^2 y {\phi_1}_y + i\sigma y {\phi_1}_{xy} \right)\mu_0^{-1}\eta,\\
(\mathbf{B}^{(0,1)}(x;\sigma)\mathbf{u})_3
=&i\sigma {\eta_1}_x\phi(1)+\left(\eta_1 - {\phi_1}_x(1)\right)\phi_y(1)+\mu_0 {\eta_1}_x\upsilon(1)+\left({\phi_1}_y(1) + i\sigma {\phi_1}_x(1) \right)\eta,
\end{aligned}
\intertext{
where $\phi_1$ and $\eta_1$ are in \eqref{def:stokes1}, and, likewise,}
&\begin{aligned}
(\mathbf{B}^{(1,1)}(x;\sigma)\mathbf{u})_1=&{\phi_1}_x(1)\phi-{\phi_1}_y(1)\upsilon+y {\phi_1}_y\eta,\\
(\mathbf{B}^{(1,1)}(x;\sigma)\mathbf{u})_2=&\left(3 \sigma^2 {\phi_1}_y(1) - \mu_0 {\phi_1}_{xx}(1) - 2i\mu_0 \sigma {\phi_1}_x(1) \right)\mu_0^{-2}\phi-2 {\phi_1}_y(1)\mu_0^{-1}\phi_y\\
&-{\phi_1}_y(1)\mu_0^{-2}\phi_{yy}+y {\phi_1}_y\mu_0^{-1}\phi_y(1)\\
&+\left({\phi_1}_{xy}(1) - \mu_0 {\phi_1}_x(1)\right)\mu_0^{-1}\upsilon
+\left(y {\phi_1}_{xy} - 2i\sigma y {\phi_1}_y\right)\mu_0^{-1}\eta,\\
(\mathbf{B}^{(1,1)}(x;\sigma)\mathbf{u})_3=&{\eta_1}_x\phi(1)+{\phi_1}_x(1)\eta.
\end{aligned}\notag
\end{align}
Additionally, we calculate that
\begin{align*}\label{eqn:B02}
(&\mathbf{B}^{(0,2)}(x;\sigma)\mathbf{u})_1\\
&=i\sigma\left( {\phi_1}_x(1)^2 + \bar{\phi}_2  +  {\phi_2}_x(1) - {\phi_1}_y(1) {\eta_1}_x\right)\phi\\
&\quad+\Big({\phi_2}_y(1) + {\phi_1}_x(1) {\phi_1}_y(1) - 2 {\phi_1}_y(1) \eta_1 
+ y {\eta_2}_x - y \eta_1 {\eta_1}_x\Big)\phi_y\\
&\quad+i\sigma y {\phi_1}_y {\eta_1}_x\phi(1)+\left(2 y \eta_1 {\phi_1}_y - y {\phi_1}_x(1) {\phi_1}_y - y {\phi_2}_y\right)\phi_y(1)\\
&\begin{aligned}\quad+\Big(&\mu_2 + \mu_0 {\phi_1}_x(1)^2 + \bar{\phi}_2 \mu_0 - {\phi_1}_y(1)^2 + \mu_0 {\phi_2}_x(1) \\ &- i\sigma {\phi_2}_y(1)  - \mu_0 {\phi_1}_y(1) {\eta_1}_x - i\sigma {\phi_1}_x(1) {\phi_1}_y(1) + i\sigma {\phi_1}_y(1) \eta_1 \Big)\upsilon+\mu_0 y {\phi_1}_y {\eta_1}_x\upsilon(1)
\end{aligned}\\
&\quad +\left(i\sigma y {\phi_2}_y - y {\phi_1}_y {\eta_1}_x  + y {\phi_1}_y(1) {\phi_1}_y  +i\sigma y {\phi_1}_x(1) {\phi_1}_y - i\sigma y \eta_1 {\phi_1}_y\right)\eta,\\
\intertext{}
(&\mathbf{B}^{(0,2)}(x;\sigma)\mathbf{u})_2\\
&\begin{aligned}=\Big(&\mu_0^2 \sigma^2 {\phi_1}_x(1)^2 - \mu_0 \mu_2 \sigma^2 - \sigma^4 {\phi_1}_y(1)^2 + \bar{\phi}_2 \mu_0^2 \sigma^2 - i\mu_0^2 \sigma {\phi_2}_{xx}(1) + i\mu_0 \sigma^3 {\phi_2}_y(1)\\
&+ \mu_0 \sigma^2 {\phi_1}_y(1)^2+ \mu_0^2 \sigma^2 {\phi_2}_x(1) + i\mu_0^2 \sigma {\phi_1}_{xy}(1) {\eta_1}_x  - i\mu_0^2 \sigma {\phi_1}_x(1) {\phi_1}_{xx}(1) + \mu_0 \sigma^2 {\phi_1}_y(1) {\phi_1}_{xx}(1) \\
&+ i\mu_0 \sigma^3 {\phi_1}_x(1) {\phi_1}_y(1)  - i\mu_0 \sigma^3 {\phi_1}_y(1) \eta_1  - \mu_0^2 \sigma^2 {\phi_1}_y(1) {\eta_1}_x + i\mu_0^2 \sigma {\phi_1}_y(1) {\eta_1}_{xx} \Big)\mu_0^{-3}\phi \end{aligned}\\
&\begin{aligned}\quad+\Big(&2\sigma^2 {\phi_1}_y(1)^2 - \mu_0 {\phi_2}_{xy}(1) - 2i\mu_0 \sigma {\phi_2}_y(1)  
- \mu_0 {\phi_1}_y(1) {\phi_1}_{xx}(1) + \mu_0 {\phi_1}_y(1) {\eta_1}_x \\
&+ 2 \mu_0 \eta_1 {\phi_1}_{xy}(1) - i\sigma {\phi_1}_y(1) {\phi_1}_{xy}(1) 
-2i \mu_0 \sigma {\phi_1}_x(1) {\phi_1}_y(1)  + 4i\mu_0 \sigma {\phi_1}_y(1) \eta_1 \Big)\mu_0^{-2}\phi_y\end{aligned}\\
&\quad+(\sigma^2 y {\phi_1}_y {\eta_1}_x + i\sigma y {\eta_1}_x {\phi_1}_{xy})\mu_0^{-1}\phi(1)\\
&\begin{aligned}\quad+\Big(&\mu_0^2 {\phi_2}_x(1) - \mu_0 {\phi_1}_y(1)^2 + 2 \mu_0^2 \eta_2 + \mu_0 \mu_2 - \mu_0^2 {\phi_1}_y(1)^2 - 3 \mu_0^2 \eta_1^2 + \sigma^2 {\phi_1}_y(1)^2+ \bar{\phi}_2 \mu_0^2 \\
&- i\mu_0 \sigma {\phi_2}_y(1)  - \mu_0^2 {\phi_1}_y(1) {\eta_1}_x - 2 \mu_0^2 {\phi_1}_x(1) \eta_1 + i\mu_0 \sigma {\phi_1}_x(1) {\phi_1}_y(1)+ 3i\mu_0 \sigma {\phi_1}_y(1) \eta_1 \Big)\mu_0^{-3}\phi_{yy} \end{aligned}\\
&\begin{aligned}\quad +\Big(&\mu_0 {\phi_1}_y(1) {\phi_1}_y - \mu_0 y {\phi_2}_{xy} + \mu_0 y^2 {\eta_1}_x {\phi_1}_{yy} + \mu_0 y {\phi_1}_y {\eta_1}_x \\
&- \sigma^2 y {\phi_1}_y(1) {\phi_1}_y + 2 \mu_0 y \eta_1 {\phi_1}_{xy} + \mu_0 y {\phi_1}_y(1) {\phi_1}_{yy} \\
&- i\sigma y {\phi_1}_y(1) {\phi_1}_{xy}  +i \mu_0 \sigma y {\phi_2}_y 
+  i\mu_0 \sigma y {\phi_1}_x(1) {\phi_1}_y - 2i\mu_0 \sigma y \eta_1 {\phi_1}_y \Big)\mu_0^{-2}\phi_y(1) 
\end{aligned} \\
&\begin{aligned}\quad +\Big(&\mu_0^2 {\phi_1}_y(1){\eta_1}_{xx} - \mu_0^2 {\phi_2}_{xx}(1) 
+ 2 \mu_0 {\phi_1}_y(1) {\phi_1}_{xy}(1) -i \mu_0^2 \sigma {\phi_2}_x(1) \\ 
&+ \mu_0^2 {\phi_1}_{xy}(1) {\eta_1}_x - \mu_0^2 {\phi_1}_x(1) {\phi_1}_{xx}(1) 
- \sigma^2 {\phi_1}_y(1) {\phi_1}_{xy}(1) - i\mu_0^2 \sigma {\phi_1}_x(1)^2 \\ 
&-i \bar{\phi}_2 \mu_0^2 \sigma  + i\mu_0 \sigma {\phi_2}_{xy}(1)  + i\mu_0^2 \sigma {\phi_1}_y(1) {\eta_1}_x 
-i \mu_0 \sigma {\phi_1}_y(1) {\eta_1}_x  - i\mu_0 \sigma \eta_1 {\phi_1}_{xy}(1) \Big)\mu_0^{-2}\upsilon \end{aligned}\\
&\quad +\Big(2 {\phi_1}_y(1) \eta_1 - {\phi_1}_x(1) {\phi_1}_y(1) - {\phi_2}_y(1) + y {\eta_2}_x - y \eta_1 {\eta_1}_x\Big)\upsilon_y\\
&\quad +\left(y {\eta_1}_x {\phi_1}_{xy} - i\sigma y {\phi_1}_y {\eta_1}_x \right)\upsilon(1)\\
&\begin{aligned}\quad+\Big(&2 \mu_0 {\phi_2}_{yy} - 2 \mu_0 {\phi_1}_x(1) {\phi_1}_{yy} 
- 6 \mu_0 \eta_1 {\phi_1}_{yy} + 2i\sigma {\phi_1}_y(1) {\phi_1}_{yy} \\ 
&- \sigma^2 y {\phi_1}_y(1) {\phi_1}_{xy} + \mu_0 \sigma^2 y {\phi_2}_y
 - i\mu_0 \sigma {\phi_1}_y(1) {\phi_1}_y + i\mu_0 \sigma y {\phi_2}_{xy} - \mu_0 y {\eta_1}_x {\phi_1}_{xy} \\
 &+ i\sigma^3 y {\phi_1}_y(1) {\phi_1}_y + \mu_0 y {\phi_1}_y(1) {\phi_1}_{xy} + \mu_0 \sigma^2 y {\phi_1}_x(1) {\phi_1}_y- \mu_0 \sigma^2 y \eta_1 {\phi_1}_y\\
& -i \mu_0 \sigma y \eta_1 {\phi_1}_{xy}  - i\mu_0 \sigma y {\phi_1}_y(1) {\phi_1}_{yy} - i\mu_0 \sigma y^2 {\eta_1}_x {\phi_1}_{yy} 
- i\mu_0 \sigma y {\phi_1}_y(1) {\phi_1}_y \Big)\mu_0^{-2}\eta,\end{aligned}\\
\intertext{and}
(&\mathbf{B}^{(0,2)}(x;\sigma)\mathbf{u})_3\\
&=i\sigma\left( {\eta_2}_x + 2 {\phi_1}_x(1) {\eta_1}_x\right)\phi(1)\\
&\quad+\Big(\eta_2 - {\phi_2}_x(1) - \bar{\phi}_2 + {\phi_1}_x(1) \eta_1 - {\phi_1}_x(1)^2 - \eta_1^2 
+ 2 {\phi_1}_y(1) {\eta_1}_x\Big)\phi_y(1)\\
&\quad+\left(\mu_0 {\eta_2}_x + 2 \mu_0 {\phi_1}_x(1) {\eta_1}_x - i\sigma {\phi_1}_y(1) {\eta_1}_x \right)\upsilon(1)\\
&\quad+\Big(i\sigma {\phi_1}_x(1)^2 + {\phi_2}_y(1)  + {\phi_1}_x(1) {\phi_1}_y(1)  -2 {\phi_1}_y(1) \eta_1  + i\bar{\phi}_2 \sigma + i\sigma {\phi_2}_x(1) - i\sigma {\phi_1}_y(1) {\eta_1}_x\Big)\eta,
\end{align*}
where $\phi_2$ and $\eta_2$ are in \eqref{def:stokes2}. \\
We do not include the formulae of $\mathbf{B}^{(m,n)}(\sigma)$, $m+n\geq3$.

\newpage

\section{Expansion of \texorpdfstring{$\mathbf{w}_{k}^{(m,n)}(x;0)$}{Lg}}\label{A:reduction0}

Recalling \eqref{def:b;exp0}, we solve \eqref{eqn:red0} and \eqref{def:f0}, and make straightforward calculations to show that, to the lowest orders,
\begin{align*}
\mathbf{w}_{1}^{(1,0)}(x;0)=&e^{-i\kappa x}\begin{pmatrix}
b^{(1,0)}_{1,1}y\sinh(\kappa y)+b^{(1,0)}_{1,2}+b^{(1,0)}_{1,3}\cosh(\kappa y)\\
-i\tanh(\kappa) \big(b^{(1,0)}_{1,1}y\sinh(\kappa y)+b^{(1,0)}_{1,2}+b^{(1,0)}_{1,3}\cosh(\kappa y)\big)\\
-i\tanh(\kappa)\big(b^{(1,0)}_{1,1}\sinh(\kappa)+b^{(1,0)}_{1,2}+b^{(1,0)}_{1,3}\cosh(\kappa)\big)
\end{pmatrix},\\
\mathbf{w}_{2}^{(1,0)}(x;0)=&e^{i\kappa x}\begin{pmatrix}
-\big(b^{(1,0)}_{1,1}y\sinh(\kappa y)+b^{(1,0)}_{1,2}+b^{(1,0)}_{1,3}\cosh(\kappa y)\big)\\
-i\tanh(\kappa)\big(b^{(1,0)}_{1,1}y\sinh(\kappa y)+b^{(1,0)}_{1,2}+b^{(1,0)}_{1,3}\cosh(\kappa y)\big)\\
-i\tanh(\kappa)\big(b^{(1,0)}_{1,1}\sinh(\kappa)+b^{(1,0)}_{1,2}+b^{(1,0)}_{1,3}\cosh(\kappa)\big)
\end{pmatrix},
\intertext{and}
\mathbf{w}_{3}^{(1,0)}(x;0)=&\begin{pmatrix}
{\displaystyle -\frac{\kappa c}{s - \kappa c}y^2+\frac{\kappa c(3s - \kappa c)}{3(s - \kappa c)^2}
+\frac{4c}{\kappa(\kappa - cs)}\cosh(\kappa y)}\\ 0\\ 0 \end{pmatrix},
\quad \mathbf{w}_{4}^{(1,0)}(x;0)=\begin{pmatrix} 0\\ 0\\ 0 \end{pmatrix},
\end{align*}
where
\[
b^{(1,0)}_{1,1}=-\frac{2i\kappa c^2}{\kappa - c s},
\quad
b^{(1,0)}_{1,2}=-\frac{2ic^2}{s - \kappa c}\quad\text{and}\quad 
b^{(1,0)}_{1,3}=-\frac{i(2\kappa c^4 + c^3s - 3\kappa c^2)}{(\kappa - cs)^2}.
\]
Throughout the section, we employ the notation
\[
s=\sinh(\kappa)\quad\text{and}\quad c=\cosh(\kappa)
\]
wherever it is convenient to do so.

Let $\mathbf{w}^{(0,1)}_k(x;0)=\begin{pmatrix}
(\mathbf{w}^{(0,1)}_k(x;0))_1\\(\mathbf{w}^{(0,1)}_k(x;0))_2\\(\mathbf{w}^{(0,1)}_k(x;0))_3\end{pmatrix}$, $k=1,2,3,4$, 
and we likewise make straightforward calculations to show that, to the lowest orders,
\begin{align*}
(\mathbf{w}^{(0,1)}_1(x;0))_1=&e^{-2i\kappa x}\big(b^{(0,1)}_{1,1}y\sinh(\kappa y)+b^{(0,1)}_{1,2}+b^{(0,1)}_{1,3}\cosh(\kappa y)+b^{(0,1)}_{1,4}\cosh(2\kappa y)\big),\\
(\mathbf{w}^{(0,1)}_1(x;0))_2=&e^{-2i\kappa x}\big(b^{(0,1)}_{2,1}(y+\tfrac{1}{2})\sinh(\kappa y)+b^{(0,1)}_{2,2}+b^{(0,1)}_{2,3}\cosh(\kappa y)+b^{(0,1)}_{2,4}\cosh(2\kappa y)\big)\\
&+b^{(0,1)}_{2,1}(y-\tfrac{1}{2})\sinh(\kappa y)+b^{(0,1)}_{2,5}+b^{(0,1)}_{2,6}\cosh(\kappa y),\\
(\mathbf{w}^{(0,1)}_1(x;0))_3=&\big[(\mathbf{w}^{(0,1)}_1(x;0))_2\big]_{y=1},
\intertext{and}
(\mathbf{w}^{(0,1)}_2(x;0))_1=&e^{-2i\kappa x}\big(b^{(0,1)}_{1,1}y\sinh(\kappa y)+b^{(0,1)}_{1,2}+b^{(0,1)}_{1,3}\cosh(\kappa y)+b^{(0,1)}_{1,4}\cosh(2\kappa y)\big),\\
(\mathbf{w}^{(0,1)}_2(x;0))_2=&-e^{-2i\kappa x}\big(b^{(0,1)}_{2,1}(y+\tfrac{1}{2})\sinh(\kappa y)+b^{(0,1)}_{2,2}+b^{(0,1)}_{2,3}\cosh(\kappa y)+b^{(0,1)}_{2,4}\cosh(2\kappa y)\big)\\
&-b^{(0,1)}_{2,1}(y-\tfrac{1}{2})\sinh(\kappa y)-b^{(0,1)}_{2,5}-b^{(0,1)}_{2,6}\cosh(\kappa y),\\
(\mathbf{w}^{(0,1)}_2(x;0))_3=&\big[(\mathbf{w}^{(0,1)}_2(x;0))_2\big]_{y=1},
\end{align*}
where
\begin{align*}
&\begin{aligned}&b^{(0,1)}_{1,1}=\kappa^2c,&&
b^{(0,1)}_{1,2}=-\frac{\kappa(4c^2s - \kappa c)}{4 s - 4\kappa c},\\
&b^{(0,1)}_{1,3}=\frac{2\kappa^2c^3 + \kappa^2c + \kappa c^2s}{2(\kappa - c s)},\qquad
&&b^{(0,1)}_{1,4}=\frac{3\kappa^2 c}{4s^3},\\&b^{(0,1)}_{2,1}=-i\kappa^2 s,&&b^{(0,1)}_{2,2}=\frac{i\kappa(\kappa c - \kappa c^3 + 2  c s - 2 c^3 s)}{2 \kappa c s - 2 c^2 + 2},\\
&b^{(0,1)}_{2,3}=\frac{i\kappa(4 \kappa - 3 \kappa c^4 - \kappa c^2 +  c s -  c^3 s)}{2 s (\kappa - c s)},&&b^{(0,1)}_{2,4}=-\frac{3i\kappa^2 }{2 s^2},\\
&b^{(0,1)}_{2,5}=\frac{i\kappa(\kappa c^2 + \kappa c^3 - \kappa - \kappa c + 2  c s - 2  c^3 s)}{2 \kappa c s - 2 c^2 + 2},&&b^{(0,1)}_{2,6}=\frac{i\kappa(2 \kappa s + c -c^3 - \kappa c^2 s)}{2 \kappa - 2 c s}.&&
\end{aligned}
\end{align*}
Additionally, we calculate that, to the lowest orders,
\begin{align*}
(\mathbf{w}^{(0,1)}_3(x;0))_1=&\sin(\kappa x)\big(b^{(0,1)}_{3,1}y\sinh(\kappa y)+b^{(0,1)}_{3,2}+b^{(0,1)}_{3,3}\cosh(\kappa y)\big),\\
(\mathbf{w}^{(0,1)}_3(x;0))_2=&\tanh(\kappa)\cos(\kappa x)\big(b^{(0,1)}_{3,1}y\sinh(\kappa y)+b^{(0,1)}_{3,2}+b^{(0,1)}_{3,3}\cosh(\kappa y)\big),\\
(\mathbf{w}^{(0,1)}_3(x;0))_3=&\big[(\mathbf{w}^{(0,1)}_3(x;0))_2\big]_{y=1},
\end{align*}
where
\begin{align*}
&b^{(0,1)}_{3,1}=\frac{- 2 \kappa^2 c^2 - \kappa c s}{\kappa - c s},\qquad
b^{(0,1)}_{3,2}=\frac{-c (s + 2 \kappa c)}{s - \kappa c},\\
&b^{(0,1)}_{3,3}=\frac{c^2 - c^4 + 6 \kappa^2 c^2 - 4 \kappa^2 c^4 + 3 \kappa c s - 4 \kappa c^3 s}{2 \kappa^2 + 2 c^2 s^2 - 4 \kappa c s},
\end{align*}
and $\mathbf{w}^{(0,1)}_4(x;0)=\mathbf{0}$.\\
We do not include the formulae of $\mathbf{w}^{(m,n)}_k(x;0)$, $k=1,2,3,4$, and $m+n\geq2$.

\section{Expansion of \texorpdfstring{$\mathbf{w}_{k}^{(m,n)}(x;\sigma)$}{Lg}}\label{A:reduction;high}
Throughout the section, $\sigma>\sigma_c$. Inserting \eqref{def:a;expH}, \eqref{def:b;expH} and \eqref{def:B;expH} into the latter equation of \eqref{eqn:LB;u12}, at the order of $\delta^m\eps^n$, $m+n\geq 1$, let $\mathbf{w}_{k}^{(m,n)}(x;\sigma)=\begin{pmatrix}\phi\\ \upsilon \\ \eta\end{pmatrix}$, by abuse of notation, and we arrive at
\ba \label{eqn:redH}
&\begin{aligned}\phi_{xx}+\phi_{yy}=&i\sigma((\mathbf{1}-\boldsymbol{\Pi}(\sigma))\mathbf{f}_{k}^{(m,n)}(x;\sigma))_1+{((\mathbf{1}-\boldsymbol{\Pi}(\sigma))\mathbf{f}_{k}^{(m,n)}(x;\sigma))_1}_x\\
&+\mu_0((\mathbf{1}-\boldsymbol{\Pi}(\sigma))\mathbf{f}_{k}^{(m,n)}(x;\sigma))_2\end{aligned}&&\text{for $0<y<1$},\\
&\upsilon=\mu_0^{-1}\phi_x-i\sigma\mu_0^{-1}\phi-\mu_0^{-1}((\mathbf{1}-\boldsymbol{\Pi}(\sigma))\mathbf{f}_{k}^{(m,n)}(x;\sigma))_1&&\text{for $0<y<1$},\\
&\eta_x=-\phi_y+i\sigma\eta+((\mathbf{1}-\boldsymbol{\Pi}(\sigma))\mathbf{f}_{k}^{(m,n)}(x;\sigma))_3&&\text{at $y=1$},\\
&\eta=\upsilon&&\text{at $y=1$}, \\
&\phi_y=0&&\text{at $y=0$}.
\ea
We solve the first and the last equations of \eqref{eqn:redH}, for instance, by the method of undetermined coefficients, subject to that $\mathbf{w}_{k}^{(m,n)}(x;\sigma)\in(\mathbf{1}-\boldsymbol{\Pi}(\sigma))Y$, so that $\boldsymbol{\Pi}(\sigma)\mathbf{w}_{k}^{(m,n)}(x;\sigma)=\mathbf{0}$, to determine $\phi$, and we determine $\upsilon$ and $\eta$ by the second and fourth equations of \eqref{eqn:redH}. 

A straightforward calculation reveals that, to the lowest orders,
\begin{equation}\label{def:w(0,1)1}
\begin{aligned}
&\begin{aligned}
(\mathbf{w}^{(0,1)}_1(x;\sigma))_1=e^{ik_2x}\big(&
\big(b^{(0,1)}_{1,1} \sin(\kappa x)+b^{(0,1)}_{1,2} \cos(\kappa x)\big)\cosh(k_2 y) \\
&+ b^{(0,1)}_{1,3}\sin(\kappa x)\,y\sinh(\kappa y) + k_2\kappa c\cos(\kappa x)\,y\sinh(k_2 y)\\
&+\big(b^{(0,1)}_{1,4} \sin(\kappa x)+ b^{(0,1)}_{1,5} \cos(\kappa x)\big)\cosh(k_4 y) \\
&+ b^{(0,1)}_{1,6} e^{i\kappa x} \cosh((k_2 + \kappa)y)
+ b^{(0,1)}_{1,7} e^{-i\kappa x} \cosh((k_2 - \kappa)y)\big),
\end{aligned}\\
&\begin{aligned}
(\mathbf{w}^{(0,1)}_1(x;\sigma))_2=e^{ik_2x} \big(
& (b^{(0,1)}_{1,8} \sin(\kappa x)+b^{(0,1)}_{1,9} \cos(\kappa x))\cosh(k_2 y) \\
& +\big(b^{(0,1)}_{1,10} \sin(\kappa x)+ b^{(0,1)}_{1,11} \cos(\kappa x)\big)\cosh(k_4 y) \\
&+ \big(b^{(0,1)}_{1,12}\sin(\kappa x) +\tanh(\kappa)b^{(0,1)}_{1,3}\cos(\kappa x)\big)y \sinh(\kappa y)\\
&-k_2\kappa s\sin(\kappa x)\sinh(k_2y)+  ik_2 s(k_2 - \sigma) \cos(\kappa x)\,y\sinh(k_2 y) \\
&+ib^{(0,1)}_{1,6}(k_2 + \kappa - \sigma)\mu_0^{-1}e^{i\kappa x}\cosh((k_2 + \kappa)y)\\
&+ib^{(0,1)}_{1,7}(k_2- \kappa - \sigma)\mu_0^{-1}e^{-i\kappa x}\cosh((k_2 - \kappa)y)\big),
\end{aligned}\\
&(\mathbf{w}^{(0,1)}_1(x;\sigma))_3=\big[(\mathbf{w}^{(0,1)}_1)_2\big]_{y=1},
\end{aligned}
\end{equation}
where
\begin{align*}
b^{(0,1)}_{1,1}=&\frac{- 4i k_2^2 (k_2^2 c_2 s  + \sigma^2 c_2 s  - k_2 \kappa c s_2  - 2k_2 \sigma c_2 s )}{\kappa s_2 (4 k_2^2 - \kappa^2)} - \frac{p^{(0,1)}_{1,2} c_2 (k_2 - \sigma)^2}{k_2 \kappa s_2},\\
b^{(0,1)}_{1,2}=&\frac{2 k_2 (k_2^2 c_2 s + \sigma^2 c_2 s - k_2 \kappa c s_2 - 2 k_2 \sigma c_2 s)}{s_2 (4 k_2^2 - \kappa^2)}+ \frac{p^{(0,1)}_{1,1} c_2 (k_2 - \sigma)^2}{k_2 \kappa s_2},\\
b^{(0,1)}_{1,3}=&i\kappa c_2 (k_2-\sigma), \\
b^{(0,1)}_{1,4}=&\frac{ic_4 (k_4 - \sigma)^2 ((k_4 - k_2) p^{(0,1)}_{1,3} + i\kappa p^{(0,1)}_{1,4} )}{k_4 s_4 (2 k_2 k_4 - k_2^2 - k_4^2 + \kappa^2)},\\
b^{(0,1)}_{1,5}=&\frac{-ic_4 (k_4 - \sigma)^2 ((k_2 - k_4) p^{(0,1)}_{1,4} + i\kappa p^{(0,1)}_{1,3} )}{k_4 s_4 (2 k_2 k_4 - k_2^2 - k_4^2 + \kappa^2)},
\intertext{and}
b^{(0,1)}_{1,8}=&i(k_2 - \sigma)\mu_0^{-1}b^{(0,1)}_{1,1}-\kappa \mu_0^{-1}b^{(0,1)}_{1,2}+p^{(0,1)}_{1,1}-\sigma c^{-1}(c^2 - 1) (k_2 - \sigma),\\
b^{(0,1)}_{1,9}=&\kappa \mu_0^{-1}b^{(0,1)}_{1,1}+i(k_2 - \sigma)\mu_0^{-1}b^{(0,1)}_{1,2}+p^{(0,1)}_{1,2}-ik_2\kappa c,\\
b^{(0,1)}_{1,10}=&i(k_2 - \sigma)\mu_0^{-1}b^{(0,1)}_{1,4}-\kappa \mu_0^{-1}b^{(0,1)}_{1,5}+p^{(0,1)}_{1,3}, \\
b^{(0,1)}_{1,11}=&\kappa\mu_0^{-1} b^{(0,1)}_{1,4}+i(k_2 - \sigma)\mu_0^{-1}b^{(0,1)}_{1,5}+p^{(0,1)}_{1,4}.
\end{align*}
Here and elsewhere, we employ the notation
\begin{align*}
&s=\sinh(\kappa), & &s_2=\sinh(k_2),& &s_4=\sinh(k_4), &&s(1)=\sinh(1),\\
&c=\cosh(\kappa),& &c_2=\cosh(k_2), & &c_4=\cosh(k_4),& &s_2(2)=\sinh(2k_2)
\end{align*}
whenever it is convenient to do so.
Also
\begin{align*}
\begin{aligned}
b^{(0,1)}_{1,6}=\Big(&2 k_2^3 \kappa^3 c_2^2 s_2 + 6 k_2^4 \kappa^2 c_2^2 s_2 - 4 k_2^6 c_2^3 c s + 2 \kappa^2 \sigma^4 c_2^2 s_2 - 2 \kappa^3 \sigma^3 c_2^2 s_2 - 2 k_2^3 \kappa^2 c_2^3 c^2 + 4 k_2^4 \kappa c_2 c^2\\& + 4 k_2^5 \kappa c_2^2 s_2 + 4 k_2^6 c_2 c s + 2 k_2^3 \kappa^2 c_2 c^2 - 4 k_2^4 \kappa c_2^3 c^2 + 10 k_2^5 \kappa c_2 c s + 4 k_2 \kappa \sigma^4 c_2^2 s_2 - 16 k_2^4 \kappa \sigma c_2^2 s_2 \\&- 16 k_2^5 \sigma c_2 c s + 2 k_2^3 \kappa^3 c_2^2 c^2 s_2 + 8 k_2^4 \kappa^2 c_2^2 c^2 s_2 - 2 \kappa^2 \sigma^4 c_2^2 c^2 s_2 - 2 k_2 \kappa^2 \sigma^2 c_2 c^2 - 4 k_2^2 \kappa \sigma^2 c_2 c^2 \\&+ 4 k_2^2 \kappa^2 \sigma c_2 c^2 + 2 k_2^3 \kappa^3 c_2 c s + 8 k_2^4 \kappa^2 c_2 c s - 6 k_2^5 \kappa c_2^3 c s - 12 k_2 \kappa^2 \sigma^3 c_2^2 s_2 + 6 k_2 \kappa^3 \sigma^2 c_2^2 s_2 \\&- 16 k_2^2 \kappa \sigma^3 c_2^2 s_2 - 6 k_2^2 \kappa^3 \sigma c_2^2 s_2 + 24 k_2^3 \kappa \sigma^2 c_2^2 s_2 - 20 k_2^3 \kappa^2 \sigma c_2^2 s_2 + 4 k_2^2 \sigma^4 c_2 c s - 16 k_2^3 \sigma^3 c_2 c s \\&+ 24 k_2^4 \sigma^2 c_2 c s + 16 k_2^5 \sigma c_2^3 c s + 2 k_2 \kappa^2 \sigma^2 c_2^3 c^2 + 4 k_2^2 \kappa \sigma^2 c_2^3 c^2 - 4 k_2^2 \kappa^2 \sigma c_2^3 c^2 + 8 k_2^5 \kappa c_2^2 c^2 s_2 \\&- 2 k_2^3 \kappa^3 c_2^3 c s - 6 k_2^4 \kappa^2 c_2^3 c s + 24 k_2^2 \kappa^2 \sigma^2 c_2^2 s_2 - 4 k_2^2 \sigma^4 c_2^3 c s + 16 k_2^3 \sigma^3 c_2^3 c s - 24 k_2^4 \sigma^2 c_2^3 c s \\&+ 2 \kappa^2 \sigma^4 c_2^3 c s + 4 k_2^5 c_2^2 c s_2 s + 6 k_2^2 \kappa^2 \sigma^2 c_2^2 c^2 s_2 - 4 k_2 \kappa \sigma^4 c_2^2 c^2 s_2 - 16 k_2^4 \kappa \sigma c_2^2 c^2 s_2 + 16 k_2^2 \kappa^2 \sigma^2 c_2 c s\\& - 4 k_2 \kappa^2 \sigma^3 c_2^3 c s - 2 k_2 \kappa^3 \sigma^2 c_2^3 c s + 4 k_2^2 \kappa^3 \sigma c_2^3 c s - 12 k_2^3 \kappa \sigma^2 c_2^3 c s + 12 k_2^3 \kappa^2 \sigma c_2^3 c s + 2 k_2^4 \kappa c_2^2 c s_2 s\\& - 4 k_2 \sigma^4 c_2^2 c s_2 s - 8 k_2^4 \sigma c_2^2 c s_2 s - 2 \kappa \sigma^4 c_2^2 c s_2 s + 2 k_2 \kappa \sigma^4 c_2 c s - 32 k_2^4 \kappa \sigma c_2 c s + 4 k_2 \kappa^2 \sigma^3 c_2^2 c^2 s_2 \\&+ 2 k_2 \kappa^3 \sigma^2 c_2^2 c^2 s_2 + 8 k_2^2 \kappa \sigma^3 c_2^2 c^2 s_2 - 4 k_2^2 \kappa^3 \sigma c_2^2 c^2 s_2 + 4 k_2^3 \kappa \sigma^2 c_2^2 c^2 s_2 - 16 k_2^3 \kappa^2 \sigma c_2^2 c^2 s_2\\& - 4 k_2^2 \kappa^2 \sigma^2 c_2^3 c s + 8 k_2^2 \sigma^3 c_2^2 c s_2 s - 4 k_2 \kappa^2 \sigma^3 c_2 c s + 2 k_2 \kappa^3 \sigma^2 c_2 c s - 16 k_2^2 \kappa \sigma^3 c_2 c s - 4 k_2^2 \kappa^3 \sigma c_2 c s \\&+ 36 k_2^3 \kappa \sigma^2 c_2 c s - 20 k_2^3 \kappa^2 \sigma c_2 c s + 2 k_2 \kappa \sigma^4 c_2^3 c s + 16 k_2^4 \kappa \sigma c_2^3 c s - 8 k_2^2 \kappa \sigma^2 c_2^2 c s_2 s + 8 k_2 \kappa \sigma^3 c_2^2 c s_2 s\Big) \\ 
\cdot\Big(&12 k_2^2 \kappa^2 c_2^3 c^2 - 4 \kappa^2 \sigma^2 c_2^3 c^2 - 4 k_2 \kappa^3 c_2 c^2 - 8 k_2^3 \kappa c_2 c^2 - 8 k_2^4 c s_2 s \\&- 12 k_2^2 \kappa^2 c_2 c^2 + 4 k_2 \kappa^3 c_2^3 c^2 + 8 k_2^3 \kappa c_2^3 c^2 + 4 \kappa^2 \sigma^2 c_2 c^2 + 8 k_2 \kappa \sigma^2 c_2 c^2 - 4 k_2 \kappa^3 c s_2 s - 20 k_2^3 \kappa c s_2 s \\&+ 16 k_2^3 \sigma c s_2 s - 8 k_2 \kappa \sigma^2 c_2^3 c^2 - 16 k_2^2 \kappa^2 c s_2 s - 8 k_2^2 \sigma^2 c s_2 s + 4 k_2 \kappa^3 c_2^2 c s_2 s + 8 k_2^3 \kappa c_2^2 c s_2 s \\&- 4 k_2 \kappa \sigma^2 c s_2 s + 8 k_2 \kappa^2 \sigma c s_2 s + 24 k_2^2 \kappa \sigma c s_2 s + 12 k_2^2 \kappa^2 c_2^2 c s_2 s - 4 \kappa^2 \sigma^2 c_2^2 c s_2 s - 8 k_2 \kappa \sigma^2 c_2^2 c s_2 s\Big)^{-1}
\intertext{and}
b^{(0,1)}_{1,7}=\Big(&6 k_2^4 \kappa^2 c_2^2 s_2 - 2 k_2^3 \kappa^3 c_2^2 s_2 + 4 k_2^6 c_2^3 c s + 2 \kappa^2 \sigma^4 c_2^2 s_2 + 2 \kappa^3 \sigma^3 c_2^2 s_2 - 2 k_2^3 \kappa^2 c_2^3 c^2 - 4 k_2^4 \kappa c_2 c^2\\& - 4 k_2^5 \kappa c_2^2 s_2 - 4 k_2^6 c_2 c s + 2 k_2^3 \kappa^2 c_2 c^2 + 4 k_2^4 \kappa c_2^3 c^2 + 10 k_2^5 \kappa c_2 c s - 4 k_2 \kappa \sigma^4 c_2^2 s_2 + 16 k_2^4 \kappa \sigma c_2^2 s_2 \\&+ 16 k_2^5 \sigma c_2 c s - 2 k_2^3 \kappa^3 c_2^2 c^2 s_2 + 8 k_2^4 \kappa^2 c_2^2 c^2 s_2 - 2 \kappa^2 \sigma^4 c_2^2 c^2 s_2 - 2 k_2 \kappa^2 \sigma^2 c_2 c^2 + 4 k_2^2 \kappa \sigma^2 c_2 c^2\\& + 4 k_2^2 \kappa^2 \sigma c_2 c^2 + 2 k_2^3 \kappa^3 c_2 c s - 8 k_2^4 \kappa^2 c_2 c s - 6 k_2^5 \kappa c_2^3 c s - 12 k_2 \kappa^2 \sigma^3 c_2^2 s_2 - 6 k_2 \kappa^3 \sigma^2 c_2^2 s_2\\& + 16 k_2^2 \kappa \sigma^3 c_2^2 s_2 + 6 k_2^2 \kappa^3 \sigma c_2^2 s_2 - 24 k_2^3 \kappa \sigma^2 c_2^2 s_2 - 20 k_2^3 \kappa^2 \sigma c_2^2 s_2 - 4 k_2^2 \sigma^4 c_2 c s + 16 k_2^3 \sigma^3 c_2 c s \\&- 24 k_2^4 \sigma^2 c_2 c s - 16 k_2^5 \sigma c_2^3 c s + 2 k_2 \kappa^2 \sigma^2 c_2^3 c^2 - 4 k_2^2 \kappa \sigma^2 c_2^3 c^2 - 4 k_2^2 \kappa^2 \sigma c_2^3 c^2 - 8 k_2^5 \kappa c_2^2 c^2 s_2\\& - 2 k_2^3 \kappa^3 c_2^3 c s + 6 k_2^4 \kappa^2 c_2^3 c s + 24 k_2^2 \kappa^2 \sigma^2 c_2^2 s_2 + 4 k_2^2 \sigma^4 c_2^3 c s - 16 k_2^3 \sigma^3 c_2^3 c s + 24 k_2^4 \sigma^2 c_2^3 c s\\& - 2 \kappa^2 \sigma^4 c_2^3 c s - 4 k_2^5 c_2^2 c s_2 s + 6 k_2^2 \kappa^2 \sigma^2 c_2^2 c^2 s_2 + 4 k_2 \kappa \sigma^4 c_2^2 c^2 s_2 + 16 k_2^4 \kappa \sigma c_2^2 c^2 s_2 - 16 k_2^2 \kappa^2 \sigma^2 c_2 c s\\& + 4 k_2 \kappa^2 \sigma^3 c_2^3 c s - 2 k_2 \kappa^3 \sigma^2 c_2^3 c s + 4 k_2^2 \kappa^3 \sigma c_2^3 c s - 12 k_2^3 \kappa \sigma^2 c_2^3 c s - 12 k_2^3 \kappa^2 \sigma c_2^3 c s + 2 k_2^4 \kappa c_2^2 c s_2 s \\&+ 4 k_2 \sigma^4 c_2^2 c s_2 s + 8 k_2^4 \sigma c_2^2 c s_2 s - 2 \kappa \sigma^4 c_2^2 c s_2 s + 2 k_2 \kappa \sigma^4 c_2 c s - 32 k_2^4 \kappa \sigma c_2 c s + 4 k_2 \kappa^2 \sigma^3 c_2^2 c^2 s_2\\& - 2 k_2 \kappa^3 \sigma^2 c_2^2 c^2 s_2 - 8 k_2^2 \kappa \sigma^3 c_2^2 c^2 s_2 + 4 k_2^2 \kappa^3 \sigma c_2^2 c^2 s_2 - 4 k_2^3 \kappa \sigma^2 c_2^2 c^2 s_2 - 16 k_2^3 \kappa^2 \sigma c_2^2 c^2 s_2 \\&+ 4 k_2^2 \kappa^2 \sigma^2 c_2^3 c s - 8 k_2^2 \sigma^3 c_2^2 c s_2 s + 4 k_2 \kappa^2 \sigma^3 c_2 c s + 2 k_2 \kappa^3 \sigma^2 c_2 c s - 16 k_2^2 \kappa \sigma^3 c_2 c s \\&- 4 k_2^2 \kappa^3 \sigma c_2 c s + 36 k_2^3 \kappa \sigma^2 c_2 c s + 20 k_2^3 \kappa^2 \sigma c_2 c s \\ &+ 2 k_2 \kappa \sigma^4 c_2^3 c s + 16 k_2^4 \kappa \sigma c_2^3 c s - 8 k_2^2 \kappa \sigma^2 c_2^2 c s_2 s + 8 k_2 \kappa \sigma^3 c_2^2 c s_2 s\Big) 
\end{aligned}
\end{align*}

\begin{align*}
\begin{aligned}
\cdot\Big(&4 \kappa^2 \sigma^2 c_2^3 c^2 - 12 k_2^2 \kappa^2 c_2^3 c^2 - 4 k_2 \kappa^3 c_2 c^2 - 8 k_2^3 \kappa c_2 c^2 - 8 k_2^4 c s_2 s \\&+ 12 k_2^2 \kappa^2 c_2 c^2 + 4 k_2 \kappa^3 c_2^3 c^2 + 8 k_2^3 \kappa c_2^3 c^2 - 4 \kappa^2 \sigma^2 c_2 c^2 + 8 k_2 \kappa \sigma^2 c_2 c^2 + 4 k_2 \kappa^3 c s_2 s + 20 k_2^3 \kappa c s_2 s \\&+ 16 k_2^3 \sigma c s_2 s - 8 k_2 \kappa \sigma^2 c_2^3 c^2 - 16 k_2^2 \kappa^2 c s_2 s - 8 k_2^2 \sigma^2 c s_2 s - 4 k_2 \kappa^3 c_2^2 c s_2 s - 8 k_2^3 \kappa c_2^2 c s_2 s \\
&+ 4 k_2 \kappa \sigma^2 c s_2 s + 8 k_2 \kappa^2 \sigma c s_2 s - 24 k_2^2 \kappa \sigma c s_2 s + 12 k_2^2 \kappa^2 c_2^2 c s_2 s - 4 \kappa^2 \sigma^2 c_2^2 c s_2 s + 8 k_2 \kappa \sigma^2 c_2^2 c s_2 s\Big)^{-1},
\end{aligned}
\end{align*}
where
\begin{align*}
&\begin{aligned}
p^{(0,1)}_1=-(&k_2^6 \kappa s + 4 k_2^2 \kappa^4 c + k_2^2 \kappa^6 c - 2 k_2^4 \kappa^4 c + k_2^6 \kappa^2 c - k_2^2 \kappa^5 s + 2 k_2^2 \kappa^5 c_2^2 s - 2 k_2^4 \kappa^3 c_2^2 s\\& + 2 k_2^3 \kappa^4 \sigma c - k_2^5 \kappa^2 \sigma c + 2 k_2^3 \kappa^3 \sigma s - k_2 \kappa^6 \sigma c - k_2 \kappa^5 \sigma s - k_2^5 \kappa \sigma s - 4 k_2^2 \kappa^4 c_2^2 c + k_2 \kappa^6 c_2 c s_2 \\&- \kappa^6 \sigma c_2 c s_2 + 3 k_2 \kappa^5 c_2 s_2 s + k_2^5 \kappa c_2 s_2 s - \kappa^5 \sigma c_2 s_2 s - 4 k_2^3 \kappa^4 c_2 c s_2 \\  &+ 3 k_2^5 \kappa^2 c_2 c s_2 - k_2^4 \kappa \sigma c_2 s_2 s + 2 k_2^2 \kappa^4 \sigma c_2 c s_2 - k_2^4 \kappa^2 \sigma c_2 c s_2 + 2 k_2^2 \kappa^3 \sigma c_2 s_2 s)\\
&\cdot((k_2^2 - \kappa^2)^2 (k_2 s_2(2) + \sigma s_2(2) + 2 k_2 \sigma - 2 k_2^2))^{-1},
\end{aligned}\\
&\begin{aligned}
p^{(0,1)}_2=-ik_2 (&\kappa^6 c_2^3 s - 2 k_2^6 c_2 s - \kappa^6 c_2 s - 4 k_2^2 \kappa^4 c_2^3 s + 3 k_2^4 \kappa^2 c_2^3 s + 2 k_2^5 \sigma c_2 s + \kappa^6 \sigma s_2 s - 4 k_2^2 \kappa^3 c_2 c\\& + 2 k_2^2 \kappa^4 c_2 s + k_2^4 \kappa^2 c_2 s + 4 k_2^2 \kappa^3 c_2^3 c - 4 k_2^3 \kappa^2 \sigma c_2 s - 2 k_2^2 \kappa^4 \sigma s_2 s + k_2^4 \kappa^2 \sigma s_2 s \\ &- 2 k_2 \kappa^5 c_2^2 c s_2 - 4 k_2^5 \kappa c_2^2 c s_2 - 2 k_2 \kappa^4 c_2^2 s_2 s + 2 k_2 \kappa^4 \sigma c_2 s + 6 k_2^3 \kappa^3 c_2^2 c s_2 - 2 k_2^3 \kappa^2 c_2^2 s_2 s) \\
&\cdot (c_2 (k_2^2 - \kappa^2)^2(k_2 s_2(2) + \sigma s_2(2) + 2 k_2 \sigma - 2 k_2^2))^{-1},
\end{aligned}\\
&\begin{aligned}
p^{(0,1)}_3=p_{2,2}\bigg(&\frac{k_2 \kappa c_2 (k_2 - \sigma) (k_4^2 c_4 s + \kappa^2 c_4 s + k_4 \kappa^3 c s_4 + k_4^3 \kappa c s_4 - 2 k_4^2 \kappa^2 c_4 s - 2 k_4 \kappa c s_4)}{(k_4^2 - \kappa^2)^2}\\
&+\frac{k_2^2 \kappa \sigma c_4 s_2 (k_4^2 - 1) (k_4 - \sigma)^2 (k_4^2 c_4 s + \kappa^2 c_4 s - 2 k_4 \kappa c s_4)}{k_4 s_4(k_4^2 - \kappa^2)^2(k_4 + \sigma)(k_2 - \sigma)}\\
&+\frac{k_2 \kappa c_2 (k_2 - \sigma) (\kappa c_4 c - k_4 s_4 s - k_4^2 c_4 s - k_4^2 \kappa c_4 c + k_4 \kappa^2 s_4 s + k_4 \kappa c s_4)}{k_4^2 - \kappa^2} \\
&+\frac{k_2 \kappa c_4 s_2 (k_4^2 - 1) (k_4 - \sigma)^2 (2 \kappa^2 c_4 s - 2 k_4 \kappa c s_4 + k_2 \kappa \sigma c_4 c - k_2 k_4 \sigma s_4 s)}{k_4 s_4(k_4 + \sigma)(k_2 - \sigma)(k_4^2 - \kappa^2)}\\
&+\frac{\kappa c_2 s (k_2 - \sigma)^2 (k_2^2 c_4 s_2 + k_4^2 c_4 s_2 + k_2 k_4^3 c_2 s_4 + k_2^3 k_4 c_2 s_4 - 2 k_2^2 k_4^2 c_4 s_2 - 2 k_2 k_4 c_2 s_4)}{s_2(k_2^2 - k_4^2)^2} \\
&-\frac{k_2 \kappa c_4 s (k_4^2 - 1) (k_2 - \sigma)^2 (k_4 - \sigma)^2 (k_2^2 c_4 s_2 + k_4^2 c_4 s_2 - 2 k_2 k_4 c_2 s_4)}{k_4 s_4(k_4 + \sigma)(k_2 - \sigma)(k_2^2 - k_4^2)^2}\\
&\begin{aligned}-\frac{\kappa s (k_2 - \sigma)}{s_2(k_2^2 - k_4^2)} (&k_2^2 c_2 - k_2 \sigma c_2 - k_2^2 k_4^2 c_2- k_2 \sigma c_4 s_2^2 + k_2^2 k_4 c_2^2 s_4 + k_2 k_4^2 \sigma c_2  - k_2 k_4^2 c_2 c_4 s_2\\
& + k_2 k_4^2 \sigma c_4 s_2^2 + k_4^2 \sigma c_2 c_4 s_2 - k_2 k_4 \sigma c_2^2 s_4 + k_4 \sigma c_2 s_2 s_4 - k_2^2 k_4 \sigma c_2 s_2 s_4)\end{aligned}\\
&\begin{aligned}+&\frac{k_2 \kappa c_4 (k_4^2 - 1) (k_4 - \sigma)^2}{ k_4 c_2 s_4(k_4 + \sigma)(k_2 - \sigma)(k_2^2 - k_4^2)}\\ 
&\cdot(2 k_2^3 c_2^2 c_4 s - k_2^3 c_2 s + k_2 \sigma^2 c_2 s - 2 k_2^2 \sigma c_2^2 c_4 s 
+ k_2 \sigma^2 c_4 s_2^2 s + k_2^2 \sigma c_4 s_2^2 s - k_4 \sigma^2 c_2 s_2 s_4 s  \\ 
&\quad+ k_2 k_4 \kappa c_2^2 c s_4 - k_4 \kappa \sigma c_2^2 c s_4 - k_2^2 \kappa c_2 c_4 c s_2 - 2 k_2^2 k_4 c_2 s_2 s_4 s + k_2 k_4 \sigma c_2 s_2 s_4 s + k_2 \kappa \sigma c_2 c_4 c s_2) \end{aligned}\\
&+\frac{\kappa c_2 c_4 s (k_4^2 - 1) (k_2 - \sigma) (s_2 - k_2 c_2 + \sigma c_2)}{s_2 (k_4 + \sigma)}\bigg)
- p_{1,2}\kappa s(1) c_2 s (k_2 - \sigma)^2,
\end{aligned}
\end{align*}
and
\begin{align*}
&\begin{aligned}
p^{(0,1)}_{4}=ip_{2,2}\bigg(&
\frac{k_2^2 \kappa^2 c_4 s_2 (k_4^2 - 1) (k_4 - \sigma)^2 (k_4^2 c_4 s + \kappa^2 c_4 s - 2 k_4 \kappa c s_4)}{k_4 s_4(k_4 + \sigma)(k_2 - \sigma)(k_4^2 - \kappa^2)^2}\\
&+\frac{k_2^2 \kappa^2 c_4 s_2 (k_4^2 - 1) (\kappa c_4 c - k_4 s_4 s) (k_4 - \sigma)^2}{k_4 s_4(k_4 + \sigma)(k_2 - \sigma)(k_4^2 - \kappa^2)}\\
&+\frac{\kappa c_2 c (k_2 - \sigma)^2(k_2 c_4 s_2 - k_4 c_2 s_4 - k_2 k_4^2 c_4 s_2 + k_2^2 k_4 c_2 s_4)}{s_2(k_2^2 - k_4^2)}\\
&\begin{aligned}+&\frac{k_2 c_4 (k_4^2 - 1) (k_4 - \sigma)^2}{k_4 c_2 s_4(k_4 + \sigma)(k_2 - \sigma)(k_2^2 - k_4^2)} \\
& \cdot (k_2^2 \kappa^2 c_2 s - 2 k_2^2 k_4 c_2^2 s_4 s + 2 k_2^3 c_2 c_4 s_2 s - k_2 \kappa^2 \sigma c_2 s  \\ &+ k_2 \kappa^2 \sigma c_4 s - k_2^2 \kappa^2 c_2^2 c_4 s + 2 k_2 k_4 \sigma c_2^2 s_4 s 
- k_2^2 k_4 \kappa c_2^2 c s_4  \\ &+ k_2^3 \kappa c_2 c_4 c s_2 + k_4 \kappa \sigma^2 c_2^2 c s_4 - 2 k_2^2 \sigma c_2 c_4 s_2 s - k_2 \kappa \sigma^2 c_2 c_4 c s_2 + k_2 k_4 \kappa^2 c_2 s_2 s_4 s)\end{aligned}\\
&+\frac{c_2 c_4 (k_4^2 - 1) (k_2 - \sigma)}{k_4 + \sigma}(\sigma s - k_2 s + k_2 \kappa c)\bigg)
+i p_{1,2}\kappa  s_2^{-1}s(1) c_2^2 c (k_2 - \sigma)^2.
\end{aligned}
\end{align*}
We do not include the formulae of $\mathbf{w}^{(0,1)}_2(x;\sigma)$, $\mathbf{w}^{(1,0)}_{1,2}(x;\sigma)$ and $\mathbf{w}^{(m,n)}_k(x;\sigma)$, for $k=1,2$, and $m+n\geq2$.

\section{Rigorous enclosure of solution to a nonlinear equation}\label{rigor_enclosure}

We explain how to apply a Newton-Kantorovich theorem (see, for instance, \cite{Castelli2018,newton2,CHURCH2022133072} for applications to computer-assisted proofs) to give an a posteriori rigorous error bound on the numerical approximation of the solution of \eqref{F_func}.

\begin{theorem}\label{thm:NK}
Let $X$ and $Y$ denote Banach spaces and $F:X\to Y$ be continuously differentiable. Let $x_0\in X$. Let $A^\dagger \in \mathcal{B}(X,Y)$, a bounded linear operator, and $A\in \mathcal{B}(Y,X)$ be injective. Suppose that 
\begin{enumerate}
\item $\|AF(x_0)\|_{X}\leq r_1$ for some $r_1>0$;
\item $\|\Id-AA^\dagger\|_{\mathcal{B}(X,X)}\leq r_2$ for some $r_2>0$;
\item $\|A(A^\dagger-DF(x_0))\|_{\mathcal{B}(X,X)}\leq r_3$ for some $r_3>0$, where $DF$ denotes the Fr\'echet derivative of $F$; and moreover
\item $\sup_{x\in \overline{B_r(x_0)}}\|A(DF(x)-DF(x_0))\|_{\mathcal{B}(X,X)}\leq b(r)r$ for $r>0$ for some function $b(r)>0$.
\end{enumerate}
If 
\[
b(r)r^2-(1-r_2-r_3)r+r_1<0\quad\text{for some $r>0$,}
\]
then there exists a unique $x_*\in B_r(x_0)$ such that $F(x_*)=0$.
\end{theorem}

For $\kappa>0$ suitably represented by an interval, let $x_0=(k_4,\sigma)$ make an initial guess of a solution of $F$ \eqref{F_func}, where $k_4$ and $\sigma$ are represented by intervals. We apply Theorem~\ref{thm:NK} to verify that there exists a unique solution of $F$ \eqref{F_func} in $B_r(x_0)$ for some $r>0$. We remark that $A$ and $A^\dagger$ can be chosen arbitrarily so long as the hypotheses of Theorem~\ref{thm:NK} hold. We choose $A^\dagger$ to be the midpoint of the Jacobian matrix of $F$ \eqref{F_func}, evaluated at $x_0$, and $A$ to be the inverse of $A^\dagger$. We take the Euclidean norm $\|\cdot\|$ on $\mathbb{R}^2$ and the induced $2$-norm $\|\cdot\|_2$ on $\mathcal{B}(\mathbb{R}^2,\mathbb{R}^2)$. Recall that 
\[
\|A\|_{2}\leq \|A\|_F\leq 2\|A\|_\infty \quad\text{for any $A\in\mathcal{B}(\mathbb{R}^2,\mathbb{R}^2)$},
\]
where $\|\cdot\|_F$ denotes the sub-multiplicative Frobenius norm and $\|\cdot\|_\infty$ the maximum norm.

For $x\in \overline{B_r(x_0)}$, we calculate
\begin{align*}
\|A(DF(x)-DF(x_0))\|_{2}\leq &\|A(DF(x)-DF(x_0))\|_F
\leq \|A\|_F\|DF(x)-DF(x_0)\|_F \\
\leq & 2\|A\|_F\|DF(x)-DF(x_0)\|_\infty \\
\leq & 2\|A\|_F \max_{1\leq m,n\leq 2}\sup_{\tilde{x}\in B(\frac12(x+x_0), \frac12\|x-x_0\|) }\| D(DF)_{mn}(\tilde{x})\|_F\|x-x_0\| \\
\leq & 2r \|A\|_F \max_{1\leq m,n\leq 2} \sup_{x\in\overline{B_r(x_0)}}\|D(DF)_{mn}(x)\|_F.
\end{align*}
Therefore we take $b(r)= 2\|A\|_F \max_{1\leq m,n\leq 2} \sup_{x\in\overline{B_r(x_0)}}\|D(DF)_{mn}(x)\|_F$. MATLAB scripts can be made available upon request.

\end{appendix}

\newpage

\bibliographystyle{amsplain}
\bibliography{wwbib}

\end{document}